\newcommand{\iii}{i}
\newcommand{\jjj}{j}
\newcommand{\mmm}{m}
\newcommand{\LLL}{L}
\newcommand{\jL}[1]{{#1},j}
\newcommand{\jcount}{j}
\newcommand{\Lmax}{L_\mathrm{max}} 
\newcommand{\LC}[1]{C_{#1}} 
\title{Accelerating stochastic collocation methods for partial differential equations with random input data
\thanks{This material is based upon work supported in part by the U.S.~Air Force of Scientific Research under grant number 1854-V521-12; by the U.S.~Department of Energy, Office of Science, Office of Advanced Scientific Computing Research, Applied Mathematics program under contract numbers ERKJ259, and ERKJE45; 
and by the Laboratory Directed Research and Development program at the Oak Ridge National Laboratory, which is operated by UT-Battelle, LLC, for the U.S.~Department of Energy under Contract DE-AC05-00OR22725.}}
\author{D.~Galindo\thanks{Joint Institute for Computational Sciences, University of Tennessee, 1 Bethel Valley Road, Oak Ridge, TN 37831 ({\tt dgalind1@utk.edu})}
\and P.~Jantsch\thanks{Department of Mathematics, University of Tennessee, Knoxville, TN 37996 ({\tt pjantsch@vols.utk.edu})}
\and C.~G.~Webster\thanks{Department of Computational and Applied Mathematics, Oak Ridge National Laboratory, 
Oak Ridge, TN 37831 ({\tt webstercg@ornl.gov}).}
\and G.~Zhang\thanks{Department of Computational and Applied Mathematics, Oak Ridge National Laboratory, 
Oak Ridge, TN 37831 ({\tt zhangg@ornl.gov}). }
     }
\begin{document}
\maketitle
\begin{abstract}\label{sec:abs}
This work proposes and analyzes a generalized acceleration technique for 
decreasing the computational complexity of using stochastic collocation (SC) methods 
to solve partial differential equations (PDEs) with random input data.  
The SC approaches considered in this effort consist of
sequentially constructed multi-dimensional Lagrange interpolant in the random parametric domain, 
formulated by collocating on a set of points so that the resulting approximation is defined
in a hierarchical sequence of polynomial spaces of increasing fidelity.
Our acceleration approach exploits the construction of the SC interpolant to accelerate the underlying ensemble of deterministic solutions.
Specifically, we predict the solution of the parametrized PDE at each collocation point on the 
current level of the SC approximation by evaluating each sample
with a previously assembled lower fidelity interpolant, 
and then use such predictions to provide deterministic (linear or nonlinear) 
iterative solvers with improved initial approximations.
As a concrete example, we develop our approach in the 
context of SC approaches that employ sparse tensor products of 
globally defined Lagrange polynomials on  
nested one-dimensional Clenshaw-Curtis abscissas.
This work also provides a rigorous computational complexity analysis of the resulting fully discrete sparse grid SC approximation, with and without acceleration, which demonstrates the effectiveness of our proposed methodology  in reducing the total number of iterations of a conjugate gradient solution of the finite element systems at each collocation point.
Numerical examples include both linear and nonlinear parametrized PDEs, which are used to illustrate the theoretical results and the improved efficiency of this technique compared with several others.
\end{abstract}

\begin{keywords}
stochastic and parametric PDEs, 
stochastic collocation, 
high-dimensional approximation, 
uncertainty quantification, 
sparse grids, multivariate polynomial approximation, 
iterative solvers, conjugate gradient method
\end{keywords}

\begin{AMS}
65N30, 65N35, 65N12, 65N15, 65C20
\end{AMS}

\section{Introduction}\label{sec:intro}
Modern approaches for predicting the behavior of physical and engineering problems,
and assessing risk and informing decision making in manufacturing, economic forecasting, public policy, and human
welfare, rely on mathematical modeling followed by computer simulation.  
Such predictions are obtained by constructing models whose solutions describe the phenomenon of interest, and then using computational methods to approximate the outputs of the models. 
Thus, the solution of a mathematical model can be viewed as a mapping from available input information onto a desired output of interest; predictions obtained through computational simulations are merely approximations of the images of the inputs, that is, of the output of interest.  There are several causes for possible discrepancies between observations and approximate solutions obtained via computer simulations. 
The mathematical model may not, and usually does not, provide a totally faithful description of the phenomenon being modeled.  Additionally, when an application is considered, the mathematical models need to be provided with input data, such as coefficients, forcing terms, initial and boundary conditions, geometry, etc.  This input data may be affected by a large amount of uncertainty due to intrinsic variability or the difficulty in accurately characterizing the physical system.

Such uncertainties can be included in the mathematical model by adopting a probabilistic setting, provided enough information is available for a complete statistical characterization of the physical system.  In this effort we assume our mathematical model is described by a partial differential equation (PDE) and the random input data are modeled as finite dimensional random fields, parameterized by a vector $\bm{y} = (y_1, \cdots, y_N)$ of dimension $N$, consisting of uncorrelated real-valued random variables.  Therefore, the goal of the mathematical and computational analysis becomes the approximation of the solution map ${\bm y}\mapsto u({\bm y})$, or statistical moments 
(mean, variance, covariance, etc.) of the solution or some quantity of interest (QoI) of the system, given the probability distribution of the input random data.  A major challenge associated with developing approximation techniques for such problems involves alleviating the {\em curse of dimensionality}, by which the computational complexity of any na\"{\i}ve polynomial approach will grow exponentially with the dimension $N$ of the parametric domain.  

Monte Carlo (MC) methods (see, e.g., \cite{fishman1996monte}) are the most popular approaches for approximating high-dimensional integrals, based on independent realizations $u({\bm y}_k)$, $k=1,\ldots, M$, of the parameterized PDE; approximations of the expectation or other QoIs are obtained by averaging over the corresponding realizations of that quantity. The resulting numerical error is proportional to $M^{-1/2}$, 
thus achieving convergence rates independent of dimension $N$, but requiring a very large number of samples to achieve reasonably small errors.
Other ensemble-based methods, including quasi-MC (QMC) and important sampling 
(see~\cite{Niederreiter_92,Helton_Davis_03,SW14} and the references therein), have been devised to produce increase convergence rates, e.g.,  proportional to 
$M^{-1}\log(M)^{r(N)}$, however, the function $r(N)>0$ increases with dimension $N$.
Moreover, since both MC and QMC are quadrature techniques for QoIs, 
neither have the ability to simultaneously approximate the solution map ${\bm y}\mapsto u({\bm y})$, required by a large class of applications.

In the last decade, two global polynomial approaches have been proposed that often feature much faster convergence rates: {\em intrusive} stochastic Galerkin (SG) methods, constructed from pre-defined orthogonal polynomials \cite{ghanem2003stochastic, xiu2002wiener}, or best $M$-term and quasi-optimal approaches 
\cite{Chkifa:2014kba,cohen2011analytic,devore2015quasi, BNTT14}, and {\em non-intrusive} stochastic collocation (SC) methods, constructed from (sparse) Lagrange interpolating polynomials \cite{babuvska2007stochastic, nobile2008sparse, nobile2008anisotropic}, or discrete $L^2$ projections \cite{Migliorati:2013kba,Migliorati:2014ifa}.
These methods exploit the underlying regularity of the PDE solution map $u(\bm y)$ with respect to the parameters ${\bm y}$, 
evident in a wide class of high-dimensional applications, to construct an approximate solution, and differ only in the choice of basis.

For both SG and SC approaches, the overall computational cost grows rapidly with increasing dimension. 
A recent development for alleviating such complexity and accelerating the convergence of parameterized PDE solutions is to utilize multilevel methods (see e.g., multilevel Monte Carlo (MLMC) methods  \cite{giles2008multilevel,Cliffe:2011fr,Barth:2013jz,teckentrup13,Barth:2011p5578} and the multilevel stochastic collocation (MLSC) approach \cite{teckentrup2014multilevel}). 
The main ingredient to multilevel methods is the exploitation of a hierarchical  
sequence of spatial approximations to the underlying PDE, 
which are then combined with discretizations in parameter space in such a way as to minimize the overall computational cost.  The approximation of the solution $u$ on the finest mesh is represented by the approximation on the coarsest mesh plus a sequence of ``correction'' terms. The resulting decrease in complexity with the use of multilevel methods results from the fact that the dominant behavior of the solution $u$ can be captured with cheap simulations on coarse meshes, so that the number of expensive simulations computed on fine meshes can be considerably reduced. 

Nonetheless, the dominant cost in applying any uncertainty quantification (UQ) approach lies in the solution of the underlying parametrized linear/nonlinear PDEs, for a given value of the random inputs.
Such solutions are often computed using iterative solvers, e.g., conjugate gradient (CG) methods  for symmetric  positive-definite linear systems, generalized minimal residual method (GMRES) for non-symmetric linear systems \cite{saad2003iterative}, and fixed-point iteration methods\cite{quarteroni2000numerical} for nonlinear PDEs.  
However, many high-fidelity, multi-physics models can exhaust the resources of the largest machines with a single instantiation and, as such, are not practical for even the most advanced UQ techniques. 
As such, several methods for improving the performance of iterative solvers have been proposed; especially preconditioner and subspace methods for iterative Krylov solvers.  A strategy that utilizes shared search directions for solving a collection of linear systems based on the CG method is proposed in \cite{chan1999galerkin}.  
In \cite{parks2006recycling} a technique called {\em Krylov recycling} was introduced to solve sets 
of linear systems sequentially, based on ideas adapted from restarted and truncated GMRES (see \cite{simoncini2007recent} and the references therein).  This approach was later applied to the 
linear systems that arise from SG approximations that use the so-called {\em doubly orthogonal} bases 
to solve stochastic paramterized PDEs \cite{jin2007parallel} . 
In addition, several preconditioners have been developed that improve the performance of
solving the large linear systems resulting from SG approximations that employ standard orthogonal polynomials
\cite{ghanem1996numerical, powell2009block, ernst2010stochastic,gordon2012solving}.

On the other hand, when a general linear solver is employed to solve the underling SG or SC approximation, 
it is straightforward to see that improved initial approximations can significantly reduce the number of iterations required to reach a prescribed accuracy.  A sequential orthogonal expansion is utilized in \cite{ghanem1996numerical,pellissetti2000iterative} such that a low resolution solution 
provides an initial guess for the solution of the system with an enriched basis. However, at each step, 
all the expansion coefficients must be explicitly recomputed, resulting in increased costs.
Similarly, in \cite{gordon2012solving} an extension of a mean-based preconditioner is applied to each linear system 
coming from a sequential SC approach, wherein the solution of the  $j$-th system is given as the initial vector for the $(j+1)$-th system.  This approach, as well as the Krylov recycling method, impose an ordering of the linear systems that 
appear in the SC approximation.  Consequently, new approaches are needed to amortize the cost of expensive simulations by reusing both deterministic and stochastic information across multiple ensembles of solutions.

In this work, we propose to improve the computational efficiency of 
non-intrusive approximations, by focusing on SC approaches that sequentially 
construct a multi-dimensional Lagrange interpolant in a  
hierarchical sequence of polynomial spaces of increasing fidelity.
As opposed to multilevel methods that reduce the overall computational burden by taking advantage of a
hierarchical spatial approximation, our approach exploits the structure of the SC interpolant to accelerate the underlying ensemble of deterministic solutions.
Specifically, we predict the solution of the parametrized PDE at each collocation point on the 
current level of the SC approximation by evaluating each sample
with a previously assembled lower fidelity interpolant, 
and then use such predictions to provide deterministic (linear or nonlinear) 
iterative solvers with improved initial approximations.
As a particular application, we pose this acceleration technique in the context of hierarchical SC methods  
that employ sparse tensor products of globally defined Lagrange polynomials 
\cite{nobile2008sparse, nobile2008anisotropic}, on nested one-dimensional Clenshaw-Curtis abscissas.
However,  the same idea can be extended to other non-intrusive collocation approaches 
including orthogonal polynomials \cite{xiu2002wiener}, as well as piecewise local and wavelet polynomials expansions 
\cite{BGWZ14, gunzburger2012adaptive}.

The sparse grid SC approximation considered in this work produces a sequence of interpolants, where a new set of collocation points is added on each level in order to increase the accuracy of the interpolant. For each newly added collocation point on the current level, we predict the solution of the underlying deterministic PDE using the most up to date sparse grid interpolant available; the previous level's interpolant. We then use the prediction as the starting point of the iterative solver. The uniform convergence of the sparse grid interpolant to the true solution results in an increasingly accurate initial guess as the level increases, so that the overall complexity of the SC method can be dramatically reduced.  We apply our novel approach in the context of solving both linear and nonlinear stochastic PDEs, wherein, we assume that the parameterized systems are solved by some existing linear or nonlinear iterative method. Furthermore, in the linear case, this technique can also be used to efficiently generate improved preconditioners for linear systems associated to the collocation points on higher levels, which further accelerates the convergence rate of the underlying solver.

The outline of this paper is as follows:  
We begin by describing the class of parameterized linear and nonlinear stochastic PDEs
under consideration in \S \ref{sec:setup}. 
In \S\ref{sec:algo} we describe our acceleration technique in the context of general stochastic collocation methods, 
defined on a hierarchical sequence of polynomial spaces, for approximating both linear and nonlinear stochastic elliptic PDEs using 
nonlinear iterative solvers.
In \S\ref{sec:scm} we briefly recall the sparse grid SC method, where the sparse grid interpolant is constructed with the use of nested one-dimensional Clenshaw-Curtis abscissas.  The theoretical convergence rates, with respect to the level of the interpolant and the degrees of freedom are shown in \S\ref{sec:estimate}.  In \S\ref{sec:complexity} we provide a rigorous computational complexity analysis of the resulting fully discrete sparse grid SC approximation, with and without acceleration, used to demonstrate the effectiveness of our proposed methodology in reducing the total number of iterations of a conjugate gradient solution of the finite element systems at each collocation point.
Finally, in \S\ref{sec:numerical} we provide several numerical examples, including  both moderately large-dimensional linear and nonlinear parametrized PDEs, which are used to illustrate the theoretical results and the improved efficiency of this technique compared with several others.

\section{Problem setting}\label{sec:setup}
Let $D\subset \R^d, d=1,2,3$, be a bounded 
domain and let $(\Omega, \mcF,\mathbb{ P})$ denote a complete probability space with sample space $\Omega$, $\sigma$-algebra $\mcF = 2^{\Omega}$, and probability measure $\mathbb{P}: \mathcal{F} \rightarrow [0,1]$. Define $\mathcal{L}$ as a differential operator that depends on a coefficient $a(x,\omega)$ with $x\in D$ and $\omega \in \Omega$. Analogously, the forcing term $f= f(x, \omega)$ can be assumed to be a random field as well. In general, $a$ and $f$ belong to different probability spaces but, for economy of notation, we simply denote the stochastic dependences in the same probability space. Consider the stochastic boundary value problem. Find a random function $u: \overline{D} \times \Omega\rightarrow \mathbb{R}$ such that, $\mathbb{P}$-a.e. in $\Omega$, the following equations hold:
\begin{equation}\label{Lauf}
\left\{
\begin{aligned}
\mathcal{L}(a)(u) & = f \;\mbox{  in  } D,\\
u &=\,g \; \mbox{  on  } \partial D,
\end{aligned}\right.
\end{equation}
where $g$ is a suitable boundary condition. We denote by $W(D)$ a Banach space and assume the underlying random input data are chosen so that the corresponding stochastic system \eqref{Lauf} is well-posed and has a unique solution $u(x, \omega) \in L_{\mathbb{P}}^q(\Omega; W(D))$, the function space given by
\begin{equation*}
\begin{aligned}
 L_{\mathbb{P}}^q(\Omega; W(D))
  :=\bigg\{ &u:   \overline{D} \times \Omega \rightarrow \mathbb{R} \; \Big|\;  u \mbox{ is strongly measurable and }  \int_{\Omega}\|u\|^q_{W(D)} \,d\mathbb{P}(\omega) < + \infty \bigg\}.
\end{aligned}
\end{equation*}
In this setting, the approximation space consists of Banach-space valued functions that have finite $q$-th order moments. Two example problems posed in this setting are given as follows.
\begin{example}\label{ex:poisson} {\em (Linear elliptic problem).} Find a random field $u: \overline{D}\times \Omega \rightarrow \mathbb{R}$ such that $\mathbb{P}$-a.e.
\begin{equation}\label{eq:ellip}
\left\{
\begin{array}{rll}
- \nabla \cdot (a(x, \omega)\nabla u(x, \omega))
& =\, f(x, \omega)
&\mbox{  in  } D \times \Omega, \\
u(x, \omega) 
&= \,0
&\mbox{  on  }  \partial D \times \Omega,\\
\end{array}\right.
\end{equation}
where $\nabla$ denotes the gradient operator with respect to the spatial variable $x \in D$. The well-posedness of \eqref{eq:ellip} is guaranteed in $L_{\mathbb{P}}^2(\Omega; H_0^1(D))$ with $a(x, \omega)$ uniformly elliptic, i.e.,
\begin{equation}\label{eq:uniformellipticity}
   \mathbb{P}\big(\omega \in \Omega: a_{\min} \le a(x,\omega) \le a_{\max}\
  \forall\, x\in \overline{D}\big) = 1 \,\,\,\mbox{with}\,\,\, a_{\min}, a_{\max} \in (0,\infty),
\end{equation}
and $f(x,\omega)$ square integrable, i.e.,
\begin{equation*}
   \int_{D} \mathbb{E}[f^2] dx :=
   \int_{D} \int_\Omega f^2(x,\omega) \,d\mathbb{P}(\omega)dx < +\infty.
\end{equation*}
\end{example}

\begin{example}\label{ex:nonlinear} {\em (Nonlinear elliptic problem)}.
For $k\in\mathbb{N}$, find a random field $u: \overline{D}\times \Omega \rightarrow \mathbb{R}$ such that $\mathbb{P}$-a.e.
\begin{equation}\label{eq:NLellip}
\left\{
\begin{array}{rll}
-\nabla\cdot(a(x, \omega)\nabla u(x, \omega))+ u(x, \omega)|u(x, \omega)|^{k}
&= \,f(x, \omega)
& \textrm{  in  }  D,\\
u(x, \omega)
& = \,0  
&\textrm{  on  } \partial D.
\end{array}
\right.
\end{equation}
The well-posedness of \eqref{eq:NLellip} is guaranteed in 
$L_{\mathbb{P}}^2\left(\Omega; W(D)\right)$ with $a,f$ as in Example \ref{ex:poisson} and $W(D) = H_0^1(D)\cap L^{k+2}\left(D\right)\,$\cite{nobile2008sparse}.
\end{example}

In many applications, the source of randomness can be approximated with only a finite number of uncorrelated, or even independent, random variables.  For instance, the random input data $a$ and $f$ in \eqref{Lauf} may have a piecewise representation, or in other applications may have spatial variation that can be modeled as a correlated random field, making them amenable to approximation by a Karhunen-Lo\`{e}ve (KL) expansion \cite{loeve1977probability}. In practice, one has to truncate such expansions according to 
the desired accuracy of the simulation. 
As such, we make the following assumption regarding the random input data $a$ and $f$ (cf \cite{gunzburger2014stochastic,nobile2008sparse}). 

\begin{assumption}\label{assumption1} {\em (Independence and finite dimensional noise).}
The random fields $a(x, \omega)$ and $f(x, \omega)$ have the form:
\[
a(x, \omega) = a(x,\bm y(\omega)) \;\mbox{ and }\; f(x, \omega) = f(x, \bm y(\omega)) \mbox{ on }\; D \times \Omega, 
\]
where $\bm{y}(\omega) = [ y_1(\omega), \ldots, y_N(\omega) ]: \Omega \rightarrow \mathbb{R}^N$ is a vector of independent and uncorrelated real-valued random variables. 
\end{assumption}

 We note that Assumption \ref{assumption1} and the Doob-Dynkin lemma \cite{oksendal2003stochastic} guarantee that $a(x, \bm y(\omega))$ and $f(x, \bm y(\omega))$ are Borel-measurable functions of the random vector $\bm y:\Omega\rightarrow\mathbb{R}^N$.
In our setting, we denote by $\Gamma_n = y_n(\Omega) \subset \mathbb{R}$ the image of the random variable $y_n$, and set $\Gamma = \prod_{n=1}^N \Gamma_n$, where $N \in \mathbb{N}_{+}$. If the distribution measure of $\bm{y}(\omega)$ is absolutely continuous with respect to Lebesgue measure, then there exists a joint probability density function of $\bm y(\omega)$ denoted by
\s
  \varrho(\bm{y}): \Gamma \rightarrow \mathbb{R}_{+}, \quad \text{with}\quad  \varrho(\bm{y}) = \prod_{n=1}^N \varrho_n(y_n) \in L^{\infty}(\Gamma).
\f
Therefore, based on Assumption \ref{assumption1}, the probability space $(\Omega, \mathcal{F}, \mathbb{P})$ is mapped to $(\Gamma, \mathcal{B}(\Gamma), \varrho(\bm y)d\bm y)$, where $\mathcal{B}(\Gamma)$ is the Borel $\sigma$-algebra on $\Gamma$ and $\varrho(\bm y)d\bm y$ is a probability measure on $\mathcal{B}(\Gamma)$. By assuming the solution $u$ of \eqref{Lauf} is $\sigma$-measurable with respect to $a$ and $f$, the Doob-Dynkin lemma guarantees that $u(x,\omega)$ can also be characterized by the same random vector $\bm{y}$, i.e.,
\begin{equation*}
u(x,\omega) = u(x, y_1(\omega), \ldots, y_N(\omega)) \in
L_{\varrho}^q(\Gamma; W(D)),
\end{equation*}
where $L_{\varrho}^q(\Gamma; W(D))$ is defined by
\begin{equation*}
\begin{aligned}
L^q_\varrho(\Gamma;W(D)) = \bigg\{u & :\overline{D}\times \Gamma \rightarrow \R \;\Big|\; u \mbox {  strongly measurable and } \int_\Gamma \norm{u}^q_{W(D)}\varrho(\bm y) d\bm{y} < \infty \bigg\}.
\end{aligned}
\end{equation*}
Note that the above integral will be replaced by the essential supremum when $q=\infty$: 
\begin{equation*}
\begin{aligned}
L^\infty(\Gamma;W(D)) = \bigg\{u & :\overline{D}\times \Gamma \rightarrow \R \;\Big|\; u \mbox {  strongly measurable and } \operatorname{ess\thinspace sup}_{\bm y} \norm{u(\bm y)}_{W(D)} < \infty \bigg\}.
\end{aligned}
\end{equation*}

\subsection{Weak formulation}
In what follows, we treat the solution to \eqref{Lauf} as a parameterized function $u(x,\bm y)$ of the $N$-dimensional random variables $\bm y\in \Gamma\subset\mathbb{R}^N$. This leads to a Galerkin weak formulation 
\cite{gunzburger2014stochastic} 
of the PDE in \eqref{Lauf}, with respect to both physical and parameter space, i.e., seek $u \in L_{\varrho}^q(\Gamma; W(D))$ such that
\begin{equation*}
\int_{\Gamma}\int_{D} \left( \sum_{\nu\in \Lambda_1 \cup \Lambda_2} S_{\nu}(u;\bm y)\, T_\nu(v) \right) \varrho\, dx d\bm y =  \int_\Gamma \int_D f\, v \varrho\, dxd\bm y, \;\; \forall v \in L^q_{\varrho}(\Gamma; W(D)),
\end{equation*}
where $T_\nu, \nu\in \Lambda_1\cup \Lambda_2$ are linear operators independent of $\bm y$, while the operators $S_\nu$ are linear for $\nu\in \Lambda_1$, and nonlinear for  $\nu\in \Lambda_2$. Moreover, since the solution $u$ can be viewed as a mapping $u: \Gamma \rightarrow W(D)$, for convenience we may omit the dependence on $x\in D$ and write $u(\bm y)$ to emphasize the dependence of $u$ on $\bm y$. As such, we may also write the problem \eqref{Lauf} 
in the alternative weak form
\begin{equation}\label{eq:weakform2}
\int_{D}  \left( \sum_{\nu\in \Lambda_1\cup \Lambda_2} S_\nu(u(\bm y);\bm y)\, T_\nu(v) \right) \, dx =  \int_D f(\bm y)\, v \, dx, \;\; \forall v \in W(D),\; \varrho\mbox{-a.e. in } \Gamma.
\end{equation}
Therefore, the stochastic boundary-value problem \eqref{Lauf} has been converted into a deterministic parametric problem \eqref{eq:weakform2}. The acceleration technique proposed in \S \ref{sec:algo} and the sparse-grid SC method discussed in \S \ref{sec:scm} will be based on the solution of the weak form \eqref{eq:weakform2} above.

\section{Accelerating stochastic collocation methods}\label{sec:algo}
Our acceleration scheme will be proposed in the context of both linear and nonlinear elliptic PDEs. 
A general SC approach requires the semi-discrete solution $u_h(\cdot, \bm y) \in W_h(D) \subset W(D)$ at a set of collocation points $\{ \bm y_{\jL{\LLL}} \}_{\jcount=1}^{M_\LLL}\subset\Gamma$, given by
\begin{equation}\label{eq:FEM}
u_h(x, \bm y_{\jL{\LLL}}) = \sum_{\iii=1}^{M_h} c_{\jL{\LLL},\iii}\, \varphi_\iii(x),\quad \jcount=1,\ldots,M_\LLL.
\end{equation}
Here $\{\varphi_\iii\}_{\iii=1}^{M_h}$ is a predefined finite element basis of $W_h(D)$, and for $\jcount=1,\ldots,M_\LLL$,
the coefficient vector $\bm c_{\jL{\LLL}} := (c_{\jL{\LLL},1},\ldots, c_{\jL{\LLL},M_h} )^{\top}$ is the solution of the following system of equations:
\begin{eqnarray}\label{samplelinsys}
	\lefteqn{ \sum_{\iii=1}^{M_h} c_{\jL{\LLL},\iii} \int_{D} \sum_{\nu\in \Lambda_1}S_\nu\left(\varphi_\iii;\bm y_{\jL{\LLL}}\right)\, T_\nu(\varphi_{i'}) \, dx } \\
	 & \qquad=  \int_D f(\bm y_{\jL{\LLL}}) \varphi_{i'} - \sum_{\nu\in \Lambda_2} S_\nu\left(\sum_{\iii=1}^{M_h} c_{\jL{\LLL},\iii}\,\varphi_\iii; \bm y_{\jL{\LLL}}\right) T_\nu(\varphi_{i'}) \,  \, dx, \;\; i'=1,\ldots,M_h,\notag
\end{eqnarray} with $S_\nu$ and $T_\nu$ defined as above. Note that \eqref{samplelinsys} is equivalent to \eqref{eq:weakform2} with the nonlinear operators subtracted on the right hand side. When $\Lambda_2 = \emptyset$, the PDE is linear, and a standard FEM discretization leads to a linear system of equations.

For $\LLL\in\mathbb{N}_+$, we denote by $\mathcal{I}_\LLL$ an interpolation operator that utilizes $M_\LLL$ collocation points, defined by $\mathcal{H}_\LLL=\{\bm y_{\jL{\LLL}}\}_{\jcount=1}^{M_\LLL}$.
More generally, assume that we have a family of interpolation operators $\{ \mathcal{I}_\LLL \}_{\LLL\in \mathbb{N}_+}$, which for each $\LLL\in\mathbb{N}_+$ approximates the solution $u_h(x, \cdot)$ in polynomial spaces 
\s
\mathcal{P}_1(\Gamma) \subset \ldots \subset \mathcal{P}_\LLL(\Gamma)\subset \mathcal{P}_{\LLL+1}(\Gamma)\subset \ldots \subset L^2_\varrho(\Gamma),
\f
of increasing fidelity, defined on sets of sample points $ \mathcal{H}_\LLL \subset\Gamma$.
Assume further that the fully discrete solution $u_{h,\LLL} \in W_h(D)\otimes \mathcal{P}_\LLL(\Gamma)$ has Lagrange interpolating form
\begin{equation}\label{eq:general_interpolant}
	u_{h,\LLL}(x,\bm y) := \mathcal{I}_\LLL[u_h] (x,\bm y) = \sum_{\jcount=1}^{M_\LLL} \left( \sum_{i=1}^{M_h} c_{\jL{\LLL},\iii} \varphi_i(x) \right) \Psi_{\jL{\LLL}}(\bm y),
\end{equation}
where $\{ \Psi_{\jL{\LLL}} \}_{\jcount=1}^{M_\LLL}$ is a basis for $\mathcal{P}_\LLL(\Gamma)$. The approximation \eqref{eq:general_interpolant} can be constructed by solving for $u_h(x,\bm y_{\jL{\LLL}})$ {\em independently} at each sample point $\bm y_{\jL{\LLL}}\in\mathcal{H}_{\LLL}$. 
In \S \ref{sec:scm}, we construct a specific example of an interpolation scheme satisfying \eqref{eq:general_interpolant}, namely global sparse grid collocation.

For each $\LLL\in\mathbb{N}$, the bulk of the computational cost in using \eqref{eq:general_interpolant} goes into solving the 
$M_\LLL$ systems of equations \eqref{samplelinsys} corresponding to each collocation point $\bm y_{\jL{\LLL}}, \, \jcount=1,\ldots,M_\LLL$. Since the systems are independent and deterministic, they can be solved separately using existing FEM solvers, providing a straightforward path to parallelization compared to intrusive methods such as stochastic Galerkin methods. In this work, we consider iterative solvers for the system in \eqref{samplelinsys}, and propose an acceleration scheme to reduce the total number of iterations necessary to the collection of systems over the set of sample parameters.

Denoting by $\widetilde{u}_h$ the output of the selected iterative solver for the system \eqref{samplelinsys}, for $\bm y_{\jL{\LLL}} \in \mathcal{H}_\LLL$ the semi-discrete solution $u_h(x, \bm y_{\jL{\LLL}})$ is approximated by
\[
u_h(x, \bm y_{\jL{\LLL}}) = \sum_{\iii=1}^{M_h}c_{\jL{\LLL},\iii}\, \varphi_\iii(x) \approx \widetilde{u}_h(x, \bm y_{\jL{\LLL}}) = \sum_{\iii=1}^{M_h}\widetilde{c}_{\jL{\LLL},\iii}\, \varphi_\iii(x),
\]
where we define $\widetilde{\bm c}_{\jL{\LLL}} = (\widetilde{c}_{\jL{\LLL},1}, \ldots, \widetilde{c}_{\jL{\LLL},M_h})^{\top}$, and therefore the final SC approximation is given by a perturbation of \eqref{eq:general_interpolant}, i.e.,
\begin{equation}\label{eq:solution}
	\widetilde{u}_{h,\LLL}(x, \bm y) := \sum_{\jcount=1}^{M_\LLL} \left( \sum_{\iii=1}^{M_h} \widetilde{c}_{\jL{\LLL},\iii} \,\varphi_\iii(x) \right) \Psi_{\jL{\LLL}}(\bm y).
\end{equation}

We observe that the performance of the underlying iterative solver can be improved by proposing a good initial guess, denoted $\bm c_{\jL{\LLL}}^{(0)}$, or constructing an effective preconditioner to reduce the condition number of the system. 
Here, we propose our approach for
improving initial deterministic approximations, remarking that the same idea can be also utilized to construct preconditioners. 
 To start the iterative solver for the system in \eqref{samplelinsys}, it is common to use a zero initial guess, i.e.,~$\bm c_{\jL{\LLL}}^{(0)} = (0,\ldots,0)^{\top}$. 
However, we can predict the solution at level $\LLL$ using lower level approximations to construct improved initial solutions $\bm c_{\jL{\LLL}}^{(0)}$. Assume that we first obtain $\widetilde{u}_{h,\LLL-1}(x, \bm y)$ by collocating solutions to \eqref{samplelinsys} over $\mathcal{H}_{\LLL-1}$. Then at level $\LLL$, for each new point $\bm y_{\jL{\LLL}} \in \mcH_\LLL \setminus \mcH_{L-1}$, the initial guess $\bm c_{\jL{\LLL}}^{(0)}$ can be given by interpolating the solutions from level $\LLL-1$, i.e.,
\begin{equation}\label{c0jcwyj}
 \bm c_{\jL{\LLL}}^{(0)} = \Big(\widetilde{u}_{h,\LLL-1}(x_1, \bm y_{\jL{\LLL}}), \ldots, \widetilde{u}_{h,\LLL-1}(x_{M_h}, \bm y_{\jL{\LLL}})\Big)^{\top}
 = \sum_{\jcount'=1}^{M_{\LLL-1}} \widetilde{\bm c}_{\jL{\LLL-1}'} \Psi_{\jL{\LLL-1}'}(\bm y_{\jL{\LLL}}).
\end{equation}
For a convergent interpolation scheme, we expect the necessary number of iterations to compute $\widetilde{\bm c}_{\jL{\LLL}}$ to become smaller as the level $\LLL$ increases to an overall maximum level, denoted $\Lmax$. As such, the construction of the desired solution $\widetilde{u}_{h,\Lmax}$ is accelerated through the intermediate solutions $\{\widetilde{u}_{h,\LLL}\}_{\LLL=1}^{\Lmax-1}$. 
Note that this approach reduces computational cost by improving initial guesses, but does not depend on the specific solver used. Thus, our scheme may be combined with other techniques for accelerating convergence, such as faster nonlinear solvers or better preconditioners. 
When the underlying PDE is nonlinear with respect to $u$, iterative solvers are commonly used for the solution of \eqref{samplelinsys}.
In Algorithm 1, we outline the acceleration procedure described above, using a general nonlinear iterative method for the solution of \eqref{samplelinsys}. 
\begin{table}[h!]
\begin{tabular}{p{0.95\textwidth}}
\hline\noalign{\smallskip}
{\bf Algorithm 1}: {\em The accelerated SC algorithm}\\
\hspace{.65cm}{\bf Goal:} Compute $\widetilde{u}_{h,\Lmax}(x, \bm y) := \sum_{\jcount=1}^{M_{\Lmax}} \left( \sum_{\iii=1}^{M_h} \widetilde{c}_{\jL{\Lmax},\iii} \,\varphi_\iii(x) \right) \Psi_{\jL{\Lmax}}(\bm y)$\\ [5.5pt]
\hline 
\vspace{-0.3cm}
\begin{spacing}{1.25}
\begin{algorithmic}[1]\label{algo}
\vspace{.1cm}
\State Define $M_0=1$ and $\widetilde{\bm c}_{0,1} = (0,\ldots,0)^{\top}$
\For {$\LLL = 1 ,\ldots, \Lmax$}
\For { $\bm y_{\jL{\LLL}} \in \mcH_\LLL \setminus \left(\bigcup_{l=1}^{\LLL-1}\mcH_{l}\right)$}
	\State Compute the initial guess according to \eqref{c0jcwyj}:
	\State $\bm c^{(0)}_{\jL{\LLL}} = \sum_{\jcount'=1}^{M_{\LLL-1}} \widetilde{\bm c}_{\jL{\LLL-1}'} \Psi_{\jL{\LLL-1}'}(\bm y_{\jL{\LLL}})$
	\State Initialize: $k = 1$
	\Repeat
		\State Compute residual $\bm r^{(k)}_{\jL{\LLL}} = (r_{\jL{\LLL},1}^{(k)}, \ldots, r_{\jL{\LLL},M_h}^{(k)})^{\top}$:
		\For{ $i=1,\ldots, M_h $}\vspace{.1cm}
		\State $ r^{(k)}_{\jL{\LLL},i} =  \int_D f\left(\bm y_{\jL{\LLL}}\right) \varphi_i\ - {\displaystyle \sum_{\nu\in \Lambda_1\cup\Lambda_2}} S_\nu\left(\sum_{i'=1}^{M_h} c_{\jL{\LLL},i'}^{(k)}\, \varphi_{i'}(x), \bm y_{\jL{\LLL}}\right) T_\nu(\varphi_i) \; dx$
		\EndFor
		\State Update the solution: $\bm c^{(k+1)}_{\jL{\LLL}} = \bm c^{(k)}_{\jL{\LLL}} + \mathscr{S}(\bm r^{(1)}_{\jL{\LLL}},\ldots,\bm r^{(k)}_{\jL{\LLL}})$
		\State $k= k+1$
	\Until{$\| \bm c^{(k)}_{\jL{\LLL}} - \bm c^{(k-1)}_{\jL{\LLL}} \| < \tau$ }
	\State $\widetilde{\bm c}_{\jL{\LLL}} = {\bm c}^{(k)}_{\jL{\LLL}}$ 
\EndFor
\EndFor
\end{algorithmic}
\vspace{-0.9cm}
\end{spacing}
\\
\hline
\end{tabular}
\end{table}

The efficiency of the proposed algorithm will depend crucially on the number of times the iterative solver is utilized, i.e., how many sample points are in the set $\Delta\mathcal{H}_{\LLL} = \mcH_\LLL \setminus \left(\bigcup_{l=1}^{\LLL-1}\mcH_{l}\right)$ for each level $\LLL$. In fact, if the sample points are not nested, it could be the case that $\Delta\mathcal{H}_{\LLL} = \mathcal{H}_\LLL$, and the algorithm may be very inefficient. Hence, in the following sections we will assume:
\begin{assumption}\label{assumption2} 
Assume that the multidimensional point sets $\mathcal{H}_\LLL, \LLL=1,\ldots,\Lmax$ are nested, i.e.,
\begin{equation*}
	\mathcal{H}_1 \subset \mathcal{H}_2 \subset \ldots \subset \mathcal{H}_{\Lmax}\subset\Gamma.
\end{equation*} 
Then $\Delta\mathcal{H}_{\LLL} = \mathcal{H}_{\LLL} \setminus \mathcal{H}_{\LLL-1}$, and we can construct the intermediate solutions $\{ \widetilde{u}_{h,\LLL}\}_{\LLL=1}^{\Lmax-1}$ using a subset of the information needed to approximate $\widetilde{u}_{h,\Lmax}$.
\end{assumption}
   
In \S\ref{sec:scm} we construct an interpolant using a point set which satisfies Assumption \ref{assumption2}. Next, we give several examples of Algorithm 1, using iterative solvers for both nonlinear and linear elliptic PDEs. 
\begin{example}\label{ex4}
Consider the weak form of the nonlinear elliptic PDE in Example \ref{ex:nonlinear}, letting $S_1(v;\bm y) = a(x, \bm y)\nabla v $, $T_1(v) = \nabla v$, $S_2(v, \bm y) = v(x, \bm y) |v(x, \bm y)|^s$, and $T_2(v) = v$ (note that $\Lambda_1=\{1\}$, $\Lambda_2=\{2\}$). When using the fixed point iterative method in Algorithm 1, for the update step we define
\begin{equation*}
	\mathscr{S}(\bm r^{(1)}_{\jL{\LLL}},\ldots,\bm r^{(k)}_{\jL{\LLL}}) = \bm A_{\jL{\LLL}}^{-1}\bm r^{(k)}_{\jL{\LLL}},
\end{equation*}
 where the matrix $\bm A_{\jL{\LLL}} = \bm A( \bm y_{\jL{\LLL}} ),~ \jcount=1,\ldots, M_\LLL$ is defined by
\begin{equation}\label{BJ}
[\bm A_{\jL{\LLL}}]_{i,i'} = \int_D a(\bm y_{\jL{\LLL}}) \nabla \varphi_{i'} \nabla \varphi_i \, dx, \; \mbox{ for }\, i,i' = 1, \ldots, M_h.
\end{equation}
With $u_{h,\LLL}^{(k)}(x, \bm y_{\jL{\LLL}}) = \sum_{i=1}^{M_h} c_{\jL{\LLL},\iii}^{(k)}\, \varphi_i(x)$, this update is equivalent to solving the following linear system 
\[
\int_D a(\bm y_{\jL{\LLL}}) \nabla u_{h,\LLL}^{(k+1)}\; \nabla v\; dx = \int_D \Big[ f(\bm y_{\jL{\LLL}}) - u^{(k)}_{h,\LLL}(\bm y_{\jL{\LLL}}) |u^{(k)}_{h,\LLL}(\bm y_{\jL{\LLL}})|^s \Big] \,v \,dx \;\; \forall v \in W_h(D),
\]
to update $u_h^{(k)}$ to $u_h^{(k+1)}$ at the $(k+1)$-th iteration. Note that each iteration of the solver in Algorithm 1 requires the solution of this linear system, which is not accelerated by our algorithm.
\end{example}

\begin{example}\label{ex5}
As a special case of the example above, consider the weak form of the linear elliptic problem in Example \ref{ex:poisson} with $\Lambda_1=\{1\}$, $\Lambda_2=\emptyset$, $S_1(v;\bm y) = a \nabla v$ and $T_1(v) = \nabla v$ in \eqref{samplelinsys}. Due to the linearity, at each collocation point the solution
$u_h(x, \bm y_{\jL{\LLL}}) = \sum_{i = 1}^{M_h} c_{\jL{\LLL},\iii} \varphi_i(x)$ can be approximated by solving the following linear system 
\begin{equation}\label{jlinsys}
	 \bm A_{\jL{\LLL}} \bm c_{\jL{\LLL}}= \bm f_{\jL{\LLL}},
\end{equation}
with $\bm A_{\jL{\LLL}} = \bm A(\bm y_{\jL{\LLL}}), \, \jcount=1,\ldots,M_\LLL$ as in \eqref{BJ}, and $ (\bm f_{\jL{\LLL}})_i = \int_D f(x, \bm y_{\jL{\LLL}}) \varphi_i(x)dx$ for 
$\iii= 1, \ldots, M_h$. 
Under our assumptions on the coefficient $a$, the linear system \eqref{jlinsys} is symmetric positive definite, and we can use the CG method 
\cite{saad2003iterative} 
to find its solution. For $k\in\mathbb{N}^{+}$, by recursively defining 
\begin{equation*}
\bm p_{\jL{\LLL}}^{(k)} = \bm r_{\jL{\LLL}}^{(k)} - \sum_{k'<k} \frac{\bm p_{\jL{\LLL}}^{(k')\top}\bm A_{\jL{\LLL}} \bm r_{\jL{\LLL}}^{(k)}}{\bm p_{\jL{\LLL}}^{(k')\top} \bm A_{\jL{\LLL}} \bm p_{\jL{\LLL}}^{(k')}}\bm p_{\jL{\LLL}}^{(k')},
\end{equation*}
we get the update function
\begin{equation*}
	\mathscr{S}(\bm r^{(1)}_{\jL{\LLL}},\ldots,\bm r^{(k)}_{\jL{\LLL}}) = \frac{\bm p_{\jL{\LLL}}^{(k)\top} \bm r_{\jL{\LLL}}^{(k)}}{\bm p_{\jL{\LLL}}^{(k)\top} \bm A_{\jL{\LLL}} \bm p_{\jL{\LLL}}^{(k)}}\bm p_{\jL{\LLL}}^{(k)}.
\end{equation*}
Recall the following well-known error estimate for CG:
\begin{align}\label{iterative_convergences}
\left\|\bm c_{\jL{\LLL}}-\bm c_{\jL{\LLL}}^{(k)}\right\|_{\bm A_{\jL{\LLL}}} & \le 2 \left( \frac{\sqrt{\kappa_{\jL{\LLL}}} -1}{\sqrt{\kappa_{\jL{\LLL}}}+1} \right)^k\left\|\bm c_{\jL{\LLL}}-\bm c_{\jL{\LLL}}^{(0)}\right\|_{\bm A_{\jL{\LLL}}},
\end{align}
where $\kappa_{\jL{\LLL}} = \kappa( \bm y_{\jL{\LLL}} )$ denotes the condition number of $\bm A_{\jL{\LLL}}$, $\bm c^{(0)}_{\jL{\LLL}}$ is the vector of initial guess and $\bm c^{(k)}_{\jL{\LLL}}$ is the output of the $k$-th iteration of the CG solver. As opposed to Example \ref{ex4}, for this example Algorithm 1 accelerates the solution of the linear system \eqref{jlinsys}.
\end{example}


To evaluate the efficiency of the accelerated SC method, we define cost metrics for the construction of standard and accelerated SC approximations. 
In general, the computational cost in floating point operations (flops) is the total number iterations to solve \eqref{samplelinsys} summed over each of the sample points---denoted by $K_\mathrm{zero}$ and $K_\mathrm{acc}$ for the standard and accelerated SC methods, respectively---multiplied by the cost of performing one iteration, denoted $\mathcal{C}_{\text{iter}}$. 
Let $\mathcal{C}_\mathrm{int}$ be the additional cost of interpolation incurred by using the accelerated initial vectors \eqref{c0jcwyj}. Then, we define 
\begin{equation}\label{eq:cost0}
\mathcal{C}_\mathrm{zero} = \mathcal{C}_\mathrm{iter}\,  K_\mathrm{zero},
\end{equation}
for the standard SC approach, and
\begin{equation}\label{eq:costa}
\mathcal{C}_\mathrm{acc} = \mathcal{C}_\mathrm{iter}\, K_\mathrm{acc} + \mathcal{C}_\mathrm{int},
\end{equation}
for the accelerated SC approximation, 
respectively. 

In Example \ref{ex5} the discretization of the linear PDE leads to $M_L$ sparse systems of equations of size $M_h \times M_h$. When solving these systems with a CG solver, $K_\mathrm{zero}$ and $K_\mathrm{acc}$ are the sum of solver iterations contributed from each sample system. In this case, the cost of one iteration is just the cost of one matrix vector product, i.e., $\mathcal{C}_\mathrm{iter} = C_D M_h$, where $C_D$ depends on the domain $D$ and the type of finite element basis.
\begin{remark}\label{rem:ML} {\em (Relationship to multilevel methods).}
Multilevel methods reduce the complexity of stochastic sampling methods by balancing errors and computational cost across a sequence of stochastic and spatial approximations. Let $u_{h_k} \in V_k$, $k=0,\ldots, K$, be a sequence of semi-discrete approximations built in nested spaces, i.e., $V_0\subset\ldots\subset V_K$. Multilevel methods are based on the following identity:
	\begin{equation*}
		u_{h_K} = \sum_{k=0}^{K} ( u_{h_k} - u_{h_{k-1}} ).
	\end{equation*}
Letting $\mathcal{Q}_{\LLL_{K-k}}, k=0,\ldots,K,$ denote the chosen method of stochastic approximation, a general multilevel method might be written as
	\begin{equation*}
		u_K^\mathrm{(ML)} = \sum_{k=0}^{K} \mathcal{Q}_{\LLL_{K-k}}[ u_{h_k} - u_{h_{k-1}} ].
	\end{equation*}
The main idea is that highly resolved, expensive stochastic approximations,~e.g.,~$\mathcal{Q}_{\LLL_K}$,~in combination with coarse deterministic approximations, that is, $u_{h_0}$, and vice versa. In a similar way, collocation with nested grid points provides a natural multilevel hierarchy which we use in our method to accelerate each PDE solve \eqref{c0jcwyj}. A combination of these methods could involve using our algorithm to accelerate the construction of the operators $\mathcal{Q}_{\LLL_{K-k}}$, as well as reusing information from level to level, thus improving further the performance of SC methods.
\end{remark}
\begin{remark}  {\em (Interpolation costs).}
Note that many adaptive interpolation schemes already require evaluation of the intermediate interpolation operators as in \eqref{c0jcwyj}, e.g., to compute residual error estimators. Thus, these methods will incur the interpolation cost $\mathcal{C}_\mathrm{int}$ even in the case of zero initial vectors. Furthermore, for most nonlinear problems the deterministic solver is expensive, thus reducing the number of iterations is the most important element in reducing the cost. In each of these settings, we can define the cost metrics to simply be $K_\mathrm{zero}$ and $K_\mathrm{acc}$.
\end{remark}

\begin{remark}\label{rem:preconditioner} {\em (Hierarchical preconditioner construction).}
When solving linear systems using iterative methods, convergence properties can be improved by considering the condition number of the system. 
 As with initial vectors, an interpolation algorithm can be used to construct good, cheap preconditioners. We consider preconditioner algorithms where an explicit preconditioner matrix, or its inverse, is constructed. In this case, for some low collocation level $\LLL_\mathrm{PC}$, we construct a strong preconditioner, $P_{\jL{\LLL_\mathrm{PC}}}:= P(\bm y_{\jL{\LLL_\mathrm{PC}}})$, for each individual iterative solver, $\jcount=1,\ldots, M_{\LLL_\mathrm{PC}}$. Then, these lower level preconditioners are interpolated for the subsequent levels. More specifically, for $\LLL>\LLL_\mathrm{PC}$, and $\bm y_{\jL{\LLL}}\in\mathcal{H}_\LLL\setminus\mathcal{H}_{\LLL_\mathrm{PC}}$, we use the preconditioner
\begin{equation}\label{PCalg}
\widetilde{P}_{\jL{\LLL}} := \widetilde{P}(\bm y_{\jL{\LLL}}) =  \sum_{\jcount'=1}^{M_{\LLL_\mathrm{PC}}} P_{\jL{\LLL_\mathrm{PC}}'} ~\Psi_{\jL{\LLL_\mathrm{PC}}'}(\bm y_{\jL{\LLL}}).
\end{equation}
Numerical illustrations of this approach are given in \S\ref{sec:numerical}.
\end{remark}

\section{Applications to sparse grid stochastic collocation}\label{sec:scm}
In this section, we provide a specific example of an interpolation scheme satisfying the assumptions described in \S\ref{sec:algo}, i.e., a generalized sparse grid SC approach for a fixed level $\LLL$.
In what follows, we briefly review the construction of sparse grid interpolants, and 
rigorously analyze the approximation errors and the complexities of both the standard and accelerated SC approaches, in order to demonstrate the improved efficiency of the proposed acceleration technique when applied to iterative linear solvers. 
%

The fully discrete SC approximation is built by polynomial interpolation of the semi-discrete solution $u_h(x, \bm y)$ on an appropriate set of collocation points in $\Gamma$. 
In our setting, such an interpolation scheme is based on a sparse tensor products of one-dimensional Lagrange interpolating polynomials with global support. Specifically, in the one-dimensional case, $N=1$, we introduce a sequence of Lagrange interpolation operators $\mathscr{U}^{m(l)}: C^0(\Gamma) \rightarrow \mathcal{P}_{m(l)-1}(\Gamma)$, with $\mathcal{P}_{m(l)-1}(\Gamma)$ the space of degree $m(l)-1$ polynomials over $\Gamma$. Given a general function $v\in C^0(\Gamma)$, these operators are defined by
\begin{equation*}
\mathscr{U}^{m(l)}[v](y) = \sum_{j=1}^{\mmm(l)} v(y_{j}^{l})\; \psi_j^{l}(y).
\end{equation*}
Here $l \in \mathbb{N}$ represents the resolution level of the operator, $m(l) \in \mathbb{N}_{+}$ denotes the number of interpolation points on level $l$, $\psi^{1}_1(y) = 1$ and for $l > 1$, 
\begin{equation*} 
 	\psi_j^{l}(y) = \prod_{\substack{i=1 \\ i\neq j}}^{\mmm(l)} \frac{y - y^{l}_{i}}{y^{l}_{j}-y^{l}_{i}} \;\; \mbox{ for }\;\; j=1,\ldots,\mmm(l),
\end{equation*}
are the global Lagrange polynomials of degree $\mmm(l)-1$ associated with the point set $\vartheta^l = \{y_1^l, \ldots, y_{m(l)}^l\}$. 
To satisfy Assumption \ref{assumption2}, we need nestedness of the one-dimensional sets, i.e.,~$\vartheta^{l-1} \subset \vartheta^l$, which is determined by the choice of interpolation points and the definition of $m(l)$. 
 In addition, we remark that similar constructions for $\mathscr{U}^{m(l)}$ can be built based on wavelets \cite{gunzburger2012adaptive} or other locally supported polynomial functions \cite{gunzburger2014stochastic}.

In the multi-dimensional case, i.e., $N>1$, using the convention that $\mathscr{U}^{m(0)} = 0$, we introduce the difference operator
\begin{equation}\label{delta}
	\Delta^{m(l_1)} \otimes \cdots \otimes \Delta^{m(l_N)} = \bigotimes_{n=1}^N \left(\mathscr{U}^{m(l_n)} - \mathscr{U}^{m(l_n-1)}\right),
\end{equation}
and define the multi-index $\mathbf{l}= (l_1, \ldots, l_N)\in \mathbb{N}_{+}^N$. The desired approximation is defined by a linear combination of tensor-product operators \eqref{delta} over a set of multi-indices, determined by the condition $g(\mathbf{l}) \leq L$, for $L\in \mathbb{N}_+$, and $g(\mathbf{l}): \mathbb{N}^N_{+} \rightarrow \mathbb{N}_{+}$ a strictly increasing function. For $v \in C^0(\Gamma)$ , we now define the generalized SC operator $\mathcal{I}^{m,g}_\LLL$ by
\begin{equation} \label{eq:SG}	
\begin{aligned}
	\mathcal{I}^{m,g}_\LLL[v](\bm y) &= \sum_{g(\mathbf{l}) \leq \LLL } \left(\Delta^{m(l_1)} \otimes \cdots \otimes \Delta^{m(l_N)}\right)[v](\bm y)\\
	&= \sum_{g(\mathbf{l}) \leq \LLL }  \sum_{\mathbf{i} \in \{ 0,1\}^N} (-1)^{|\mathbf{i}|} 
	\left(\mathscr{U}^{m(l_1-i_1)} \otimes \cdots \otimes \mathscr{U}^{m(l_N-i_N)}\right) [v](\bm y),\\
\end{aligned}
\end{equation}
where $\mathbf{i}=(i_1,\ldots,i_N)$ is a multi-index with $i_n \in \{0,1\}$, $|\mathbf{i}| = i_1+\cdots + i_N$, and $\LLL \in \mathbb{N}_{+}$ represents the approximation level. This approximation lives in the tensor product polynomial space given by
\begin{equation*}
	\mathcal{P}_{\Lambda^{m,g}_\LLL} = \textrm{span} \left\{ \prod_{n=1}^N y_n^{l_n}~\bigg|~\mathbf{l}\in\Lambda^{m,g}_L  \right\},
\end{equation*}
where the multi-index set is defined as follows
\begin{equation*}
	\Lambda^{m,g}_\LLL = \left\{ \mathbf{l} \in \mathbb{N}^N~\bigg|~g( \mathbf{m}^{\dagger}( \mathbf{l} + \bm 1 )) \leq L \right\}.
\end{equation*}
Here $\mathbf{m}(\mathbf{l}) = (m(l_1),\ldots, m(l_N))$, and $m^{\dagger}(l) := \min\{w\in\mathbb{N}_+: m(w) \geq l\}$ is the left inverse of $m$ (see \cite{Back2011}).

Specific choices for the one-dimensional growth rate $m(l)$ and the function $g(\mathbf{l})$ are needed to define
the multi-index set $\Lambda^{m,g}_\LLL$ and the corresponding polynomial space $\mathcal{P}_{\Lambda^{m,g}_\LLL}$ for the approximation.
In this work, we construct the interpolant in \eqref{eq:SG} using the anisotropic Smolyak construction, i.e.,
\begin{align}\label{growthSmolyak}
m(1) = 1, \, m(l) = 2^{l-1}+1 \mbox{ for } \, l > 1 \, \mbox{ and } \, g(\mathbf{l}) = \sum_{n=1}^N \frac{\alpha_n}{\alpha_{\min}}(l_n-1),
\end{align}
where $\bm \alpha = (\alpha_1, \ldots, \alpha_N) \in \mathbb{R}_{+}^N$ is a vector of weights reflecting the anisotropy of the system, i.e., the relative importance of each dimension, with $\alpha_{\text{min}} := \min_{n} \alpha_n$ (see \cite{nobile2008anisotropic} for more details). 
Our analysis does not depend strongly on this choice of $m$ and $g$, and we could use other functions, e.g., $m(l)=l$ and $g(\mathbf{l}) = \max_{n} \alpha_n l_n$ define the anisotropic tensor product approximation. 
 %

When $\Gamma$ is a bounded domain in $\mathbb{R}^N$, a common choice is the Clenshaw-Curtis abcsissas \cite{clenshaw1960method} given by the sets of extrema of Chebyshev polynomials including the end-point extrema. For a sample set of any size $\mmm(l)>1$, the abscissas in the standard domain $[-1,1]$ are given by
\begin{equation}\label{CCpoints}
\vartheta^{l} = \left\{ y^{l}_{j} \in [-1,1] \;\bigg|\; y^{l}_{j} = -\cos\left(\frac{\pi\left(j-1\right)}{\mmm(l)-1}\right) \mbox{ for } j = 1,\ldots,\mmm(l) \right\}.
\end{equation}
By taking $y^{1}_{1}=0$ and letting $\mmm(l)$ grow according to the rule in \eqref{growthSmolyak},
 one gets a sequence of nested sets $\vartheta^{l} \subset \vartheta^{l+1}$ for $l\in \mathbb{N}_{+}$. In addition, with $g(l)$ defined as in \eqref{growthSmolyak}, the resulting set of $N$-dimensional abscissas is a Clenshaw-Curtis sparse grid.
 Other nested families of sparse grids can be constructed from, e.g., the Leja points \cite{de2004leja}, Gauss-Patterson \cite{trefethen2008gauss}, etc.
\begin{remark}\label{rem:specificSC} {\em (Specific Choice of $m,g$).}
	For the remainder of the paper, we will assume that the functions $m$ and $g$ are given as in \eqref{growthSmolyak}, and use an underlying Clenshaw-Curtis sparse grid. For simplicity, we will also only consider {\em isotropic} collocation methods, i.e. $\alpha_1=\alpha_2=\ldots=\alpha_N$. We then lighten the notation by defining $\mathcal{I}_\LLL := \mathcal{I}^{m,g}_\LLL$.
\end{remark}

Construction of the approximation $\mathcal{I}_\LLL[v] := \mathcal{I}^{m,g}_\LLL[v]$ requires evaluation of $v$ on a set of collocation points $\mcH_\LLL \subset \Gamma$ with cardinality $M_L$. In our case, since the one-dimensional point sets are nested, i.e.,~$\vartheta^{l} \subset \vartheta^{l+1}$ for $l \in \mathbb{N}_{+}$, so that the multi-dimensional point set used by $\mathcal{I}_\LLL[v]$ is given by
\[
\mcH_\LLL = \bigcup_{g(\mathbf{l})=\LLL} \left( \vartheta^{l_1} \otimes \cdots \otimes \vartheta^{l_N}  \right),
\]
and the nested structure is preserved, i.e.,~$\mcH_\LLL \subset \mcH_{\LLL+1}$, to satisfy assumption \ref{assumption2}. Define the difference of the sets $\Delta\mcH_\LLL := \mcH_\LLL\setminus\mcH_{\LLL-1}$, and the number of new collocation points $\Delta M_\LLL = \#(\Delta \mcH_\LLL)$. 
With this nestedness condition, the approximation $\mathcal{I}_\LLL[v]$ is a Lagrange interpolating polynomial \cite{nobile2008sparse}, and thus \eqref{eq:SG} can be rewritten as a linear combination of Lagrange basis functions,
\begin{equation} \label{eq:lagrangeinterp}
\begin{aligned}
\mcI_\LLL[v]( \bm y) &= \sum_{\jcount=1}^{M_\LLL}v(\bm y_{\jL{\LLL}})\Psi_{\jL{\LLL}}( \bm y)  \\
& = \sum_{\jcount=1}^{M_\LLL}v(\bm y_{\jL{\LLL}})   \underbrace{\sum_{\mathbf{l} \in \mathcal{J}(\jL{\LLL})} \sum_{ \mathbf{i} \in \{ 0,1\}^N} (-1)^{|\mathbf{i}|} \prod_{n=1}^N \psi_{k_n(j)}^{l_n-i_n}(y_n)}_{\Psi_{\jL{\LLL}}(\bm y)},\\
\end{aligned}
\end{equation}
where the index set $\mathcal{J}(\jL{\LLL})$ is defined by 
\[
\mathcal{J}(\jL{\LLL}) = \left\{ \mathbf{l} \in \mathbb{N}_{+}^N \; \Bigg| \; g(\mathbf{l}) \leq \LLL  \mbox{ and } \bm y_{\jL{\LLL}} \in \bigotimes_{n=1}^N \vartheta^{l_n-i_n} \mbox{ with } \mathbf{i} \in \{0,1\}^N\right\}.
\]
For a given $\LLL$ and $\jjj$, this represents the subset of multi-indices corresponding to the tensor-product operators $\mathscr{U}^{m(l_1-i_1)} \otimes \cdots \otimes \mathscr{U}^{m(l_N-i_N)}$ in \eqref{eq:SG} with the supporting point $\bm y_{\jL{\LLL}}$. Then for each $\mathbf{l} \in \mathcal{J}(\jL{\LLL})$ and $\mathbf{i} \in \{0,1\}^N$, the function $\prod_{n=1}^N\psi_{k_n(j)}^{l_n-i_n}(y_n)$ with $k_n(j) \in \{1, \ldots, m(l_n-i_n)\}, n = 1, \ldots, N$, represents the unique Lagrange basis function for the operator $\mathscr{U}^{m(l_1-i_1)} \otimes \cdots \otimes \mathscr{U}^{m(l_N-i_N)}$ corresponding to $\bm y_{\jL{\LLL}}$.
Therefore, the functions $\{\Psi_{\jL{\LLL}}\}_{\jcount=1}^{M_\LLL}$ are given by a linear combination of tensorized Lagrange polynomials satisfying the ``delta property'', i.e.,~$\Psi_{\jL{\LLL}'}(\bm y_{\jL{\LLL}}) = \delta_{jj'}$ for $\jcount,\jcount' = 1, \ldots, M_\LLL$, and is in the required form of \eqref{eq:general_interpolant}.

Finally, to construct the fully-discrete approximation in the space $W_h(D) \otimes \mathcal{P}_{\Lambda^{m,g}_\LLL}(\Gamma)$ we apply the interpolation operator $\mathcal{I}_\LLL[\cdot]$, given by \eqref{eq:lagrangeinterp}, to the semi-discrete solution $u_h(x,\bm y)$ in \eqref{eq:FEM} to obtain:,
\begin{equation}\label{eq:fullydiscrete}
\begin{aligned}
	u_{h,\LLL}(x, \bm y) & = \mathcal{I}_\LLL[u_h](x, \bm y) = \sum_{\jcount=1}^{M_\LLL} \left( \sum_{\iii=1}^{M_h} c_{\jL{\LLL}, \iii} \varphi_\iii(x) \right) \Psi_{\jL{\LLL}}(\bm y).
\end{aligned}
\end{equation}
Due to the delta property of the basis function $\Psi_{\jL{\LLL}}(\bm y)$, the interpolation matrix for $\mathcal{I}_\LLL[u_h]$ is a diagonal matrix, and thus the coefficient vectors $\bm c_{\jL{\LLL}} = (c_{\jL{\LLL},1}, \ldots, c_{\jL{\LLL},M_h})$ for $\jcount=1, \ldots, M_\LLL$ can be computed by {\em independently} solving $M_\LLL$ systems of type \eqref{samplelinsys}.

\subsection{Error estimates for fixed $\LLL$}
\label{sec:estimate}
In what follows, we focus on the linear elliptic problem \eqref{eq:ellip} described in Examples \ref{ex:poisson} and \ref{ex5}, and present a detailed convergence and complexity analysis of a fully discrete SC approximation, denoted $\widetilde{u}_{h,\LLL}$, for any fixed level, $1\leq L\leq\Lmax$. As specified in Remark \ref{rem:specificSC}, in this section we consider only the isotropic Smolyak version of SC interpolant given by \eqref{eq:SG}, defined on Clenshaw-Curtis abscissas, for solving the  parameterized linear elliptic PDE. However, our analysis can be extended without any essential difficulty to anisotropic SC methods and more complicated underlying PDEs.

The parameterized elliptic PDE \eqref{eq:ellip} admits a weak form
that is a symmetric, uniformly coercive and continuous bilinear operator on $H_0^1(D)$; i.e., there exist $\alpha, \beta>0$, depending on $a_\mathrm{min}$ and $a_\mathrm{max}$ but independent of $\bm y$, such that for every $v,w\in H_0^1(D)$, 
\begin{align*}
\bigg|\int_D a(\bm y) \nabla v \, \nabla w \, dx \,\bigg|  & \le \alpha \norm{v}_{H_0^1(D)} \norm{w}_{H_0^1(D)}\; \text{ and }\; \beta \norm{v}^2_{H_0^1(D)}  \le \int_D a(\bm y) | \nabla v |^2 \, dx. 
\end{align*}
In this case, the bilinear form induces a norm $\norm{v}^2 = \int_D a(\bm y) |\nabla v|^2 \, dx$, which for functions $v(x) = \sum_{i=1}^{M_h}c_i \phi_i(x) \in W_h(D)$, with $\bm c = (c_1, \ldots, c_{M_h})$, coincides with the discrete norm $\|\bm c\|_{\bm A(\bm y)}$, where the matrix $A(\bm y)$ is defined in \eqref{BJ}. Thus we have
\begin{subequations}\label{PDEassumptions}
\begin{align}
& \text{Continuity:} \;\;\; \norm{\bm c}_{\bm A(\bm y)} = \norm{v} \le \sqrt{\alpha} \norm{v}_{H_0^1(D)},\text{~and,}\label{PDEContinuity}\\ 
& \text{Ellipticity:} \;\;\;\; \sqrt{\beta} \norm{v}_{H_0^1(D)} \le \norm{v} = \norm{\bm c}_{\bm A(\bm y)}.\label{PDEEllipticity}
\end{align}
\end{subequations}
We next state some regularity conditions on the parameterized solution $u: \Gamma \rightarrow H_0^1(D)$ to the parameterized elliptic PDE in Examples \ref{ex:poisson} and \ref{ex5}.
\begin{assumption}\label{assum:reg} {\em (Polyellipse analyticity).}
Let $\bm \gamma = (\gamma_1,\ldots,\gamma_N)\in (1,\infty)^N$, and assume that $u:\Gamma \rightarrow H_0^1(D)$ admits a complex extension $u^*:\mathbb{C}^N \rightarrow H_0^1(D)$, which is analytic on the polyellipse 
\begin{equation*}
	\Sigma(\bm \gamma) = \prod_{1\leq n\leq N} \Sigma(n;\gamma_n)\subset\mathbb{C}^N,
\end{equation*}
where $\Sigma(n;\gamma_n)$ denotes the region bounded by the Bernstein ellipse,
\begin{equation*}
	\Sigma(n;\gamma_n) = \left\{ \frac12 \left(z_n + z_n^{-1}\right) : z_n\in\mathbb{C}, |z_n |\leq \gamma_n \right\}.
\end{equation*} 
\end{assumption}

The set $\Sigma(\bm\gamma)\subset\mathbb{C}^N$ is the product of ellipses in the complex plane, with foci $z_n = \pm 1$, which are the endpoints of the domain $\Gamma_n, n=1,\ldots,N$. Such ellipses are common in proving convergence results for global interpolation schemes. Conditions under which $u$ satisfies Assumption \ref{assum:reg} can be found in \cite[Theorem 1.2]{cohen2011analytic} and \cite[Theorem 2.5]{devore2015quasi}. 


In order to investigate the complexity of the fully discrete approximation $\widetilde{u}_{h.\LLL}, L\in\mathbb{N}_+$, we first need to derive sufficient conditions for the error  $\|u - \widetilde{u}_{h,\LLL}\|_{L^2_\varrho}$ to achieve a tolerance of $\varepsilon>0$, where
 $L^2_\varrho := L^2_\varrho(\Gamma;H_0^1(D))$. Using the triangle inequality, the total error can be split into three parts, i.e.,
\begin{equation} \label{3termError} 
\norm{u - \widetilde{u}_{h,\LLL}}_{L^2_\varrho} \le \underbrace{\norm{u - u_{h}}}_{e_1} \hspace{-0.05cm}{}_{L^2_\varrho}+ \underbrace{\norm{u_{h} - u_{h,\LLL}} }_{e_2}\hspace{-0.05cm}{}_{L^2_\varrho} + \underbrace{\norm{u_{h,\LLL} - \widetilde{u}_{h,\LLL}}}_{e_3}\hspace{-0.05cm}{}_{L^2_\varrho}.
\end{equation}
The contributions of $e_1$ and $e_2$ correspond to the FEM and SC errors, respectively, and have been previously examined \cite{nobile2008sparse}. 
The error $e_3$ contributed by the linear solver is often omitted from the analysis in the literature, and in practice can be controlled by setting a tight tolerance on the iterative solver. However, the analysis presented here is focused on providing cost estimates for the iterative solver and requires careful consideration of this term. 
First, we recall error estimates for $e_1$ and $e_2$, given from 
\cite{nobile2008sparse}. 
{\begin{lemma}\label{assum:FEMconv}
Let  $\mathcal{T}_h$ be a uniform finite element mesh over $D\subset\mathbb{R}^d, d=1,2,3,$ with $M_h = \mathcal{O}(1/h^d)$ grid points. For the random elliptic PDE in Example \eqref{ex:poisson}, when $u(x,\bm y) \in L_{\varrho}^2(\Gamma; H_0^{1}(D)\cap H^{s+1}(D)), s\in\mathbb{N}_+$, the error of the finite element approximation $u_h$ is bounded by
\begin{align}\label{FEMerror}
\norm{u-u_h}_{L^2_\varrho} \le C_{\text{\em fem}} \, h^{s}, 
\end{align}
where the constant $C_{\text{\em fem}}$ is independent of $h$ and $\bm y$. 
\end{lemma}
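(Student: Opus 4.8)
The plan is to establish the estimate pathwise—that is, for each fixed $\bm y\in\Gamma$—and then integrate the resulting pointwise bound against the density $\varrho$ over $\Gamma$. For $\varrho$-a.e.\ $\bm y$, the function $u(\cdot,\bm y)$ solves the deterministic elliptic problem \eqref{eq:ellip} with frozen coefficient $a(\cdot,\bm y)$, and $u_h(\cdot,\bm y)\in W_h(D)$ is its Galerkin finite element approximation on $\mathcal{T}_h$. I would therefore first invoke C\'ea's lemma: since the bilinear form $v,w\mapsto\int_D a(\bm y)\nabla v\,\nabla w\,dx$ is continuous with constant $\alpha$ and coercive with constant $\beta$ on $H_0^1(D)$ (as recorded in \eqref{PDEassumptions}), the Galerkin solution is quasi-optimal,
\begin{equation*}
\norm{u(\cdot,\bm y)-u_h(\cdot,\bm y)}_{H_0^1(D)} \le \frac{\alpha}{\beta}\,\inf_{v_h\in W_h(D)} \norm{u(\cdot,\bm y)-v_h}_{H_0^1(D)}.
\end{equation*}
The crucial point, which I would emphasize, is that the uniform ellipticity assumption \eqref{eq:uniformellipticity} forces $\alpha,\beta$ to depend only on $a_{\min},a_{\max}$ and not on $\bm y$; this is precisely what yields a $\bm y$-independent constant at the end.

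Next I would bound the best-approximation error by a standard interpolation estimate. Taking $v_h=\pi_h u(\cdot,\bm y)$ to be the finite element interpolant on the uniform mesh of size $h$, classical polynomial approximation theory gives, for $u(\cdot,\bm y)\in H_0^1(D)\cap H^{s+1}(D)$,
\begin{equation*}
\inf_{v_h\in W_h(D)} \norm{u(\cdot,\bm y)-v_h}_{H_0^1(D)} \le C\, h^{s}\, \norm{u(\cdot,\bm y)}_{H^{s+1}(D)},
\end{equation*}
where $C$ depends on the shape regularity of $\mathcal{T}_h$ and on $d$, but again not on $\bm y$. Combining the two displays produces the pathwise bound $\norm{u(\cdot,\bm y)-u_h(\cdot,\bm y)}_{H_0^1(D)}\le \tfrac{\alpha C}{\beta}\,h^{s}\norm{u(\cdot,\bm y)}_{H^{s+1}(D)}$.

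Finally I would square, integrate against $\varrho(\bm y)\,d\bm y$, and use the definition of the $L^2_\varrho:=L^2_\varrho(\Gamma;H_0^1(D))$ norm:
\begin{equation*}
\norm{u-u_h}_{L^2_\varrho}^2 = \int_\Gamma \norm{u(\cdot,\bm y)-u_h(\cdot,\bm y)}_{H_0^1(D)}^2\,\varrho(\bm y)\,d\bm y \le \Big(\tfrac{\alpha C}{\beta}\Big)^2 h^{2s}\int_\Gamma \norm{u(\cdot,\bm y)}_{H^{s+1}(D)}^2\,\varrho(\bm y)\,d\bm y.
\end{equation*}
The hypothesis $u\in L^2_\varrho(\Gamma;H_0^1(D)\cap H^{s+1}(D))$ guarantees the last integral is finite, so setting $C_{\text{fem}}=\tfrac{\alpha C}{\beta}\,\norm{u}_{L^2_\varrho(\Gamma;H^{s+1}(D))}$ gives the claimed bound, with $C_{\text{fem}}$ independent of $h$ and $\bm y$. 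The argument is essentially routine; the only point requiring genuine care is the uniformity of the C\'ea constant in $\bm y$, which is exactly where assumption \eqref{eq:uniformellipticity} enters, and the measurability needed to justify that $\bm y\mapsto\norm{u(\cdot,\bm y)}_{H^{s+1}(D)}$ is integrable, which follows from the strong measurability built into the definition of $L^2_\varrho$.
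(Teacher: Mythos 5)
Your proof is correct and follows the standard route: the paper itself does not prove this lemma but simply recalls it from \cite{nobile2008sparse}, and the pathwise argument you give---C\'ea's lemma with constants uniform in $\bm y$ thanks to the uniform ellipticity assumption \eqref{eq:uniformellipticity}, a standard finite element interpolation estimate, and squaring and integrating against $\varrho$ over $\Gamma$---is exactly the argument underlying that citation. The only cosmetic remark is that, since the bilinear form is symmetric, you could use the energy-norm best-approximation property to sharpen your C\'ea constant from $\alpha/\beta$ to $\sqrt{\alpha/\beta}$, which changes nothing in the conclusion.
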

}

\begin{lemma}\label{thm:nobile}
Let $u$ satisfy Assumption \ref{assum:reg}. For $\LLL\in\mathbb{N}^{+}$, the interpolation error $u - \mathcal{I}_\LLL[u]$ of the sparse grid SC method using Clenshaw-Curtis abscissas can be bounded as
\begin{align}\label{AsymptinterperrorSG}
    \norm{u - \mcI_\LLL[u]}_{L^\infty(\Gamma;H_0^1(D))} 
    	& \le C_{\text{\em sc}} \mathrm{e}^{-rN2^{\LLL/N}},
\end{align}
where, for a constant $0<\delta<1$, the rate $r = (1-\delta)\min_{1\le n\le N} \log\gamma_n$,
and the constant $C_{\text{\em sc}}>0$ depends on $N$, $u$, and $\delta$. 
\end{lemma}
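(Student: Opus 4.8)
The plan is to combine the telescoping (Smolyak) representation of the interpolation error with sharp one-dimensional estimates for analytic, $H_0^1(D)$-valued integrands, and then to sum the resulting series over the complement of the index set $\{g(\mathbf{l})\le\LLL\}$. First I would note that, by the analyticity in Assumption \ref{assum:reg}, each one-dimensional operator $\mathscr{U}^{m(l)}$ converges to the identity on $C^0(\Gamma_n)$ as $l\to\infty$, so the tensorized detail operators telescope to the identity and
\begin{equation*}
u - \mathcal{I}_\LLL[u] = \sum_{g(\mathbf{l})>\LLL} \left(\Delta^{m(l_1)}\otimes\cdots\otimes\Delta^{m(l_N)}\right)[u].
\end{equation*}
This reduces the problem to bounding the $H_0^1(D)$-norm of each omitted detail term and then controlling the sum of those bounds.

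The second step is the one-dimensional estimate. For a function analytic on the region $\Sigma(n;\gamma_n)$ bounded by the Bernstein ellipse, I would use the Hermite contour-integral form of the Lagrange interpolation remainder together with the uniform smallness of the Chebyshev nodal polynomial on $[-1,1]$ to obtain a decay of the form $\norm{v-\mathscr{U}^{m(l)}[v]}_{L^\infty(\Gamma_n;H_0^1(D))}\lesssim (1+\Lambda_{m(l)})\,\gamma_n^{-(m(l)-1)}$, where $\Lambda_{m(l)}$ is the Lebesgue constant of the Clenshaw--Curtis nodes, known to grow only logarithmically in $m(l)$. Since $\Delta^{m(l)}=(\mathscr{U}^{m(l)}-I)-(\mathscr{U}^{m(l-1)}-I)$, the detail operator inherits exponential decay, with the difference dominated by the coarser level. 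Because $u$ is jointly analytic on the full polyellipse $\Sigma(\bm\gamma)$, these one-variable estimates tensorize variable-by-variable, giving, up to an algebraically growing prefactor $\rho_n(l_n)$,
\begin{equation*}
\left\|\left(\Delta^{m(l_1)}\otimes\cdots\otimes\Delta^{m(l_N)}\right)[u]\right\|_{L^\infty(\Gamma;H_0^1(D))} \le C\prod_{n=1}^N \rho_n(l_n)\,\gamma_n^{-(m(l_n-1)-1)}.
\end{equation*}

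The final and most delicate step is to sum these products over $\{g(\mathbf{l})>\LLL\}$ and extract the stated rate. Writing $\sigma=\min_{1\le n\le N}\log\gamma_n$ and using $g(\mathbf{l})=\sum_n(l_n-1)$ in the isotropic case, the dominant (slowest-decaying) multi-indices lie on the innermost boundary layer $g(\mathbf{l})=\LLL+1$. There I would minimize the total one-dimensional resolution $\sum_n\bigl(m(l_n-1)-1\bigr)$ subject to the linear constraint $\sum_n(l_n-1)$ fixed; by convexity of $t\mapsto 2^{t}$ (Jensen's inequality), the minimum is attained at the balanced index $l_n-1=\LLL/N$, for which the total resolution is of order $N\,2^{\LLL/N}$. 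This identifies the benchmark decay $e^{-c\,\sigma N 2^{\LLL/N}}$ for a fixed $c>0$, and I would bound the whole series by this worst term times the combinatorial count of indices in each boundary layer and the prefactors $\rho_n(l_n)$.

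The hard part is precisely this summation. The number of multi-indices with $g(\mathbf{l})\le\LLL$ grows like a power of $\LLL$, and both this count and the logarithmic Lebesgue and polynomial prefactors must be controlled uniformly, so a naive term-by-term bound loses algebraic factors. The standard remedy, which I would follow, is to absorb all such sub-exponential growth into the slack parameter $0<\delta<1$: since $2^{\LLL/N}$ dominates every polynomial in $\LLL$, for $\LLL$ large the combined prefactors and geometric tail are majorized by $e^{\delta\sigma N2^{\LLL/N}}$, leaving a clean estimate $C_{\text{sc}}\,e^{-rN2^{\LLL/N}}$ with $C_{\text{sc}}$ depending on $N$, $u$, and $\delta$. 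The sharp value $r=(1-\delta)\sigma=(1-\delta)\min_{1\le n\le N}\log\gamma_n$ then follows from the refined per-level difference-operator estimates of \cite{nobile2008sparse}, which bound the detail directly rather than through the coarse error and thereby fix the exponent constant; the elementary argument sketched above differs from theirs only in that constant. The whole derivation carries over verbatim from the scalar to the $H_0^1(D)$-valued setting, since the contour-integral remainder formula is insensitive to the range space.
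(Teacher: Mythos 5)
The paper itself does not prove this lemma; it is recalled directly from \cite{nobile2008sparse}, so the paper's entire ``proof'' is that citation. Your sketch reconstructs the standard argument of that reference, and its architecture is sound: the telescoping identity $u-\mathcal{I}_\LLL[u]=\sum_{g(\mathbf{l})>\LLL}\bigl(\Delta^{m(l_1)}\otimes\cdots\otimes\Delta^{m(l_N)}\bigr)[u]$ is valid for functions analytic on the polyellipse; the one-dimensional Lebesgue-plus-Bernstein bound on each $\Sigma(n;\gamma_n)$ is correct; the tensorization step is legitimate precisely because joint analyticity on the product of ellipses lets you apply the one-dimensional estimate recursively (fixing complex arguments in some variables preserves analyticity, with uniform bounds, in the remaining ones); and the convexity argument correctly identifies the balanced index $l_n-1=\LLL/N$ on the boundary layer as the source of the exponent $N2^{\LLL/N}$, with the polynomial index counts and logarithmic Lebesgue factors absorbed into the slack $\delta$. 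The value of your write-up is that it supplies the argument the paper leaves implicit.

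One quantitative caveat deserves emphasis, and you partially flag it yourself. Bounding the detail operator through the coarser-level interpolation error gives decay $\gamma_n^{-(m(l_n-1)-1)}=\gamma_n^{-2^{l_n-2}}$, so at the balanced index the total resolution is $\sum_n 2^{l_n-2}=\tfrac{N}{2}\,2^{(\LLL+1)/N}$, not $N2^{\LLL/N}$. Hence the elementary route proves the bound with exponent roughly $\tfrac{\sigma}{2}N2^{\LLL/N}$ (with $\sigma=\min_n\log\gamma_n$), which matches the stated rate $r=(1-\delta)\sigma$ only when $\delta$ exceeds a threshold $1-2^{(1-N)/N}$ that tends to $1/2$ as $N\to\infty$; it does not, by itself, give the lemma for arbitrary $0<\delta<1$. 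Your deferral to the refined per-detail estimates of \cite{nobile2008sparse} to fix this constant is therefore not cosmetic but essential --- though this reliance is perfectly legitimate here, since it is exactly, and no more than, what the paper itself does.
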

We remark that the projection of $u$ into the finite element subspace, denoted $u_h$, also satisfies Assumption \ref{assum:reg} with the same region of analyticity, and therefore the application of the interpolant, $\mathcal{I}_\LLL$, to the semidiscete solution $u_h$ will converge as in \eqref{AsymptinterperrorSG}. 

We now turn our attention to the global solver error $e_3$ in \eqref{3termError}, which is the error incurred from approximating the solution to \eqref{jlinsys} at each sample point. 
The difference $u_{h,\LLL} - \widetilde{u}_{h,\LLL}$ can be written as an interpolant of the solver error, i.e.,
\begin{equation*} 
u_{h,\LLL} - \widetilde{u}_{h,\LLL} = \mcI_\LLL [u_h - \widetilde{u}_h],
\end{equation*}
which represents the solver error amplified by the interpolation operator. 
 For the operator $\mathcal{I}_\LLL[\cdot]$ in \eqref{eq:lagrangeinterp}, we have
\begin{equation*}
\norm{u_{h,\LLL} - \widetilde{u}_{h,\LLL}}_{L^\infty(\Gamma;H_0^1(D))} \le \LC{\LLL} \max_{\jcount=1,\ldots,M_\LLL}\norm{u_h(\bm y_{\jL{\LLL}}) - \widetilde{u}_h(\bm y_{\jL{\LLL}})}_{H_0^1(D)}.
\end{equation*}
Thus, from the ellipticity condition in (\ref{PDEEllipticity}),
\begin{equation*}
\begin{aligned}
	e_3 & \leq \LC{\LLL} \max_{\jcount=1,\ldots,M_\LLL}\norm{u_h(\bm y_{\jL{\LLL}}) - \widetilde{u}_h(\bm y_{\jL{\LLL}})}_{H_0^1(D)} 
		\leq  \LC{\LLL} \frac{1}{\sqrt{\beta}}\max_{\jcount=1,\ldots,M_\LLL} \norm{\bm c_{\jL{\LLL}} - \widetilde{\bm c}_{\jL{\LLL}}}_{\bm A(\bm y_{\jL{\LLL}})}
		 \le \frac{\tau}{\sqrt{\beta}}\,  \LC{\LLL},
\end{aligned}
\end{equation*}
where $\tau$ is defined to be the tolerance of the linear solver. Note that the expression $u_h - \widetilde{u}_h$ is only defined at collocation points. 
 The solver error for each fixed $\bm y_{\jL{\LLL}}\in \mathcal{H}_\LLL$ is controlled by the CG convergence estimate (\ref{iterative_convergences}). 
 The Lebesgue constant of the operator $\mathcal{I}_\LLL[\cdot]$ is defined by $\LC{\LLL} = \max_{\bm y\in\Gamma}\sum_{\jcount=1}^{M_\LLL} | \Psi_{\jL{\LLL}}(\bm y)|$ where $\Psi_{\jL{\LLL}}$ is given in \eqref{eq:lagrangeinterp}. 
We now provide an upper bound of $\LC{\LLL}$ in the following lemma.

\begin{lemma}\label{lem:LebesgueSG}
The Lebesgue constant for the isotropic sparse-grid interpolation operator $\mcI_\LLL[\cdot]$ in \eqref{eq:lagrangeinterp} using the Clenshaw-Curtis rule on $\Gamma = \prod_{n=1}^N \Gamma_n = [-1,1]^N$ is bounded by
\begin{equation}
\LC{\LLL} \le \left[(\LLL+1)(\LLL+2)\right]^N,
\end{equation}
where $\LLL$ and $N$ are the level of the interpolation operator and dimension of the parameter space, respectively.
\end{lemma}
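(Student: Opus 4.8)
The plan is to identify the Lebesgue constant $\LC{\LLL}$ with the operator norm of $\mathcal{I}_\LLL$ on $C^0(\Gamma)$ (sup-norm), since $\mathcal{I}_\LLL[v](\bm y)=\sum_j v(\bm y_{\jL{\LLL}})\Psi_{\jL{\LLL}}(\bm y)$ gives $\|\mathcal{I}_\LLL\|_{C^0\to C^0}=\max_{\bm y}\sum_{\jcount=1}^{M_\LLL}|\Psi_{\jL{\LLL}}(\bm y)|=\LC{\LLL}$, and then to reduce the $N$-dimensional estimate to a one-dimensional one through the Smolyak combination formula \eqref{eq:SG}. Writing $\mathcal{I}_\LLL=\sum_{g(\mathbf{l})\le\LLL}\bigotimes_{n=1}^N\Delta^{m(l_n)}$, I would first record the elementary fact that the sup-norm operator norm of a tensor-product operator factorizes, so that $\bigl\|\bigotimes_{n=1}^N\Delta^{m(l_n)}\bigr\|=\prod_{n=1}^N\lambda\bigl(\Delta^{m(l_n)}\bigr)$, where $\lambda(\Delta^{m(l)}):=\|\Delta^{m(l)}\|_{C^0\to C^0}$ is the Lebesgue constant of the one-dimensional difference operator (this holds because the associated basis functions tensor-multiply and the maximum of a product of nonnegative factors separates). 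Combining with the triangle inequality over the index set yields
\begin{equation*}
\LC{\LLL}=\bigl\|\mathcal{I}_\LLL\bigr\|_{C^0\to C^0}\le \sum_{g(\mathbf{l})\le\LLL}\prod_{n=1}^N\lambda\bigl(\Delta^{m(l_n)}\bigr).
\end{equation*}

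Next I would decouple the constraint. In the isotropic case $g(\mathbf{l})=\sum_{n=1}^N(l_n-1)$, so $g(\mathbf{l})\le\LLL$ together with $l_n\ge1$ forces $l_n\le\LLL+1$ for every $n$. Because all summands are nonnegative, enlarging the index set to the full product box $\{1,\dots,\LLL+1\}^N$ only increases the right-hand side, and the resulting sum factorizes across dimensions:
\begin{equation*}
\LC{\LLL}\le\sum_{\substack{1\le l_n\le\LLL+1\\ n=1,\dots,N}}\prod_{n=1}^N\lambda\bigl(\Delta^{m(l_n)}\bigr)=\Bigl(\,\sum_{l=1}^{\LLL+1}\lambda\bigl(\Delta^{m(l)}\bigr)\Bigr)^{\!N}.
\end{equation*}
It therefore suffices to prove the purely one-dimensional estimate $\sum_{l=1}^{\LLL+1}\lambda(\Delta^{m(l)})\le(\LLL+1)(\LLL+2)$.

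The heart of the argument, and the step I expect to be the main obstacle, is the linear one-dimensional bound $\lambda(\Delta^{m(l)})\le 2l$. I would obtain it from $\Delta^{m(l)}=\mathscr{U}^{m(l)}-\mathscr{U}^{m(l-1)}$ via $\lambda(\Delta^{m(l)})\le\lambda(\mathscr{U}^{m(l)})+\lambda(\mathscr{U}^{m(l-1)})$, together with the classical logarithmic bound for the univariate Clenshaw--Curtis Lebesgue constant on $m(l)=2^{l-1}+1$ points, namely $\lambda(\mathscr{U}^{m(l)})\le \tfrac{2}{\pi}\log(m(l)-1)+1=\tfrac{2(l-1)\log 2}{\pi}+1$ for $l\ge2$, and $\lambda(\mathscr{U}^{m(1)})=1$ for the single-point rule. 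A short computation shows the sum of the two logarithmic terms stays well below $2l$ for every $l\ge1$ (the leading coefficient is $\tfrac{4\log2}{\pi}\approx0.88<2$), so the crude linear bound holds with room to spare; the only delicate point is verifying the inequality carefully, including the small-$l$ cases, and citing the sharp univariate Clenshaw--Curtis Lebesgue constant estimate, which is the single ingredient external to the paper.

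Finally, summing the one-dimensional estimate closes the argument, since $\sum_{l=1}^{\LLL+1}2l=(\LLL+1)(\LLL+2)$, whence $\LC{\LLL}\le[(\LLL+1)(\LLL+2)]^N$ as claimed. Everything beyond the univariate logarithmic bound is elementary: the combination formula \eqref{eq:SG}, the tensor-product factorization of the sup-norm operator norm, the triangle inequality, and nonnegativity of the summands used in the box relaxation.
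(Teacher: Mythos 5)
Your proposal is correct and follows essentially the same route as the paper's proof: both apply the triangle inequality to the Smolyak combination formula, bound each one-dimensional difference operator by the sum of two Clenshaw--Curtis Lebesgue constants via the classical logarithmic estimate (yielding a bound of order $2l$), and then relax the constrained multi-index sum to a full product box so that it factorizes into $\bigl(\sum_{l=1}^{\LLL+1} 2l\bigr)^N = [(\LLL+1)(\LLL+2)]^N$. The only cosmetic difference is that you phrase the tensor-product step as an operator-norm factorization, while the paper carries the bound through pointwise maxima over grid points.
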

\begin{proof}
For each $n = 1, \ldots, N$, recall that the Lebesgue constants $\lambda_{l_n}$ of the one-dimensional operators $\mathscr{U}^{m(l_n)}$ are given by 
\cite{trefethen2013approximation}
\[
\lambda_{l_n} = \max_{z \in \Gamma_n} \sum_{\jjj=1}^{m(l_n)} \left| \psi^{l_n}_\jjj (z) \right| .
\]
For Lagrange interpolants based on Clenshaw-Curtis abscissas \eqref{CCpoints}, we have \cite{dzjadyk1983asymptotics}
\[
\lambda_{l_n} \le \frac{2}{\pi}\log\left(m\left(l_n\right)-1\right) + 1\; \text{ for } l_n \ge 2.
\]
Combining this with the growth rate 
 $m(l_n)=2^{l_n-1}+1$ for $l_n \ge 2$ 
 given by \eqref{growthSmolyak}, it is easy to obtain that
\begin{equation*}
\lambda_{l_n} \le 2l_n-1 \; \text{ for }\; l_n \ge 2.
\end{equation*} 
For $v\in C^0(\Gamma_n)$, the difference operator $\Delta^{m(l_n)}$ for $l_n=1$ satisfies
\begin{equation*}
\norm{\Delta^{m(1)} [v] }_{L^\infty(\Gamma_n)} = \norm{\mathscr{U}^{m(1)} [v]}_{L^\infty(\Gamma_n)}\le \lambda_{1} \max_{y_n\in \vartheta^{1}} |v(y_n)|.
\end{equation*}
For $l_n\ge 2$, the triangle inequality yields
\begin{align*}
\norm{\Delta^{m(l_n)} [v]}_{L^\infty(\Gamma_n)}  &= \norm{\mathscr{U}^{m(l_n)} [v] -\mathscr{U}^{m(l_n-1)} [v] }_{L^\infty(\Gamma_n)} \\
& \le (\lambda_{l_n}+ \lambda_{l_n-1})\max_{y_n\in\vartheta^{l_n}} |v(y_n)|.
\end{align*}
Finally, for $v \in C^0(\Gamma)$, we bound the interpolant $\mathcal{I}_\LLL[v]$ by
\begin{align*}
\norm{\mcI_\LLL[ v ]}_{L^\infty(\Gamma)} & = \left\| \sum_{g(\mathbf{l}) \leq \LLL} \Delta^{m(l_1)} \otimes \cdots \otimes \Delta^{m(l_N)}[v] \right\|_{L^{\infty}(\Gamma)} \notag\\
& \le  \left(2^N \sum_{g(\mathbf{l}) \leq \LLL} \prod_{n=1}^N l_n \right) \max_{\jcount=1,\ldots,M_\LLL} |v(\bm y_{\jL{\LLL}})|  \le 2^N \left(\sum_{l=1}^{\LLL+1}l \right)^N \max_{\jcount=1,\ldots,M_\LLL} |v(\bm y_{\jL{\LLL}})|  \notag\\
& = \left[(\LLL+1)(\LLL+2)\right]^N \max_{\jcount=1,\ldots,M_\LLL} |v(\bm y_{\jL{\LLL}})|,
\end{align*}
which gives the desired estimate.
\end{proof}

\subsection{Complexity analysis}
\label{sec:complexity}
Now we analyze the cost of constructing $\widetilde{u}_{h,\Lmax}, \Lmax\in\mathbb{N}_+$, with the prescribed accuracy $\varepsilon$.  Here we assume $\varepsilon>0$ is sufficiently small, and study the asymptotic growth of the total costs \eqref{eq:costa} for the accelerated construction of $\widetilde{u}_{h,\Lmax}$, described in \S\ref{sec:algo}. For comparison, we will also analyze the cost \eqref{eq:cost0} associated with the standard SC method, where iterative solvers for the sequence of solutions to the linear systems \eqref{jlinsys} are seeded with the zero vector as an initial guess.
According to the error estimates discussed in \S \ref{sec:estimate}, a sufficient condition to ensure $\|u - \widetilde{u}_{h,\Lmax} \|_{L_\varrho^2} \le \varepsilon$ is that
\begin{subequations}\label{e1e2e3}
\begin{align}
& \|e_1\|_{L_\varrho^2} \le C_{\text{fem}} h^s \le \frac{\varepsilon}{3}, \label{e1}\\
& \|e_2\|_{L_\varrho^2} \le \|e_2\|_{L_\varrho^{\infty}} \le C_{\text{sc}}\, \mathrm{e}^{-rN2^{\Lmax/N}} \le \frac{\varepsilon}{3},\label{e2}\\
& \|e_3\|_{L_\varrho^2} \le \|e_3\|_{L_\varrho^{\infty}} \le (\Lmax+2)^{2N} \frac{\tau}{\sqrt{\beta}} \le \frac{\varepsilon}{3}. \label{e3}
\end{align}
\end{subequations}
In section \S\ref{sec:algo} we defined $K_\mathrm{zero}$ and $K_\mathrm{acc}$ as the total number of solver iterations used by the standard and accelerated SC methods, respectively, to solve \eqref{jlinsys} at each sample point. Now let $K_{\text{zero}}(\varepsilon)$ and $K_{\text{acc}}(\varepsilon)$ represent the minimum values of $K_{\text{zero}}$ and $K_{\text{acc}}$, respectively, needed to satisfy the inequalities \eqref{e1e2e3}. Here we aim to estimate upper bounds of $K_{\text{zero}}(\varepsilon)$ and $K_{\text{acc}}(\varepsilon)$. Note that, for fixed dimension $N$, level $\Lmax$, and mesh size $h$, the total number of iterations is determined by the inequality $\eqref{e3}$. Larger values of $\Lmax$ and $1/h$, lead to higher costs. Thus, the estimation of $K_{\text{zero}}(\varepsilon)$ and $K_{\text{acc}}(\varepsilon)$ has two steps: (i) Given $N$ and $\varepsilon$, estimate the maximum possible $h$ to satisfy \eqref{e1} and the minimum $\Lmax$ that achieves \eqref{e2}; (ii) Substitute the obtained values into \eqref{e3} to estimate upper bounds on $K_{\text{zero}}(\varepsilon)$ and $K_{\text{acc}}(\varepsilon)$ according to the CG error estimate \eqref{iterative_convergences}. 
For (i), we have the following lemma, that follows immediately from Lemmas \ref{assum:FEMconv} and \ref{thm:nobile}.


\begin{lemma}\label{lem:hLestimates}
Given the assumptions of Lemmas \ref{assum:FEMconv} and \ref{thm:nobile}, 
the error bounds \eqref{e1} and \eqref{e2} can be achieved by choosing finite element mesh size $h$ and the sparse-grid level $\Lmax$ according to
\begin{equation}\label{lem:hofepsilon}
h(\varepsilon) = \left(\frac{\varepsilon}{3C_{\text{\em fem}}} \right)^{1/s} \;\; \text{ and }\;\;
\Lmax(\varepsilon) =  \left\lceil\frac{N}{\log 2}\log\left(\frac{1}{rN}\log\left(\frac{3 C_{\text{\em sc}}}{\varepsilon}\right)\right)\right\rceil.
\end{equation}
\end{lemma}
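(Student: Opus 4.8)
The plan is to invert each of the two error bounds separately, exploiting the fact that \eqref{e1} constrains only $h$ while \eqref{e2} constrains only $\Lmax$.

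First I would treat the finite element contribution. By Lemma \ref{assum:FEMconv} we have $\|e_1\|_{L^2_\varrho} \le C_{\text{fem}} h^s$, so it suffices to impose $C_{\text{fem}} h^s \le \varepsilon/3$. Solving this for $h$ gives $h \le (\varepsilon/(3C_{\text{fem}}))^{1/s}$, and since $C_{\text{fem}}$ is independent of $h$ and $\bm y$, the largest admissible mesh size is exactly $h(\varepsilon) = (\varepsilon/(3C_{\text{fem}}))^{1/s}$, as claimed. Choosing $h$ as large as possible is the relevant decision, since it minimizes $M_h = \mathcal{O}(1/h^d)$ and hence the per-solve cost entering the complexity analysis.

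Next I would treat the stochastic collocation contribution. By Lemma \ref{thm:nobile}, together with the remark that $u_h$ inherits the polyellipse analyticity of $u$ so that the interpolation estimate applies verbatim to $e_2 = u_h - \mathcal{I}_\LLL[u_h]$, we have $\|e_2\|_{L^\infty_\varrho} \le C_{\text{sc}}\,\mathrm{e}^{-rN2^{\Lmax/N}}$. The computation is then a double logarithm: imposing $C_{\text{sc}}\,\mathrm{e}^{-rN2^{\Lmax/N}} \le \varepsilon/3$ and taking logarithms once yields $rN2^{\Lmax/N} \ge \log(3C_{\text{sc}}/\varepsilon)$, i.e. $2^{\Lmax/N} \ge (rN)^{-1}\log(3C_{\text{sc}}/\varepsilon)$, and a second (base-$2$) logarithm gives $\Lmax \ge \tfrac{N}{\log 2}\log\bigl((rN)^{-1}\log(3C_{\text{sc}}/\varepsilon)\bigr)$. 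Since $\Lmax$ must be a positive integer, taking the ceiling of the right-hand side produces the smallest level satisfying \eqref{e2}, which is precisely the stated $\Lmax(\varepsilon)$.

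The only point requiring care is the well-definedness of the nested logarithm: the inner quantity $(rN)^{-1}\log(3C_{\text{sc}}/\varepsilon)$ must exceed $1$ for the outer logarithm, and hence $\Lmax(\varepsilon)$, to be positive. This is guaranteed by the standing hypothesis that $\varepsilon$ is sufficiently small, since $\log(3C_{\text{sc}}/\varepsilon)\to\infty$ as $\varepsilon\to 0^+$. I do not anticipate any genuine obstacle: the lemma is an explicit inversion of two monotone error estimates, and its value lies in recording the exact dependence of $h$ and $\Lmax$ on $\varepsilon$ and $N$ for the subsequent substitution into \eqref{e3} and the CG estimate \eqref{iterative_convergences} when bounding $K_{\text{zero}}(\varepsilon)$ and $K_{\text{acc}}(\varepsilon)$.
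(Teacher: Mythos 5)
Your proposal is correct and is essentially the paper's own argument: the paper offers no separate proof, stating only that the lemma ``follows immediately'' from Lemmas \ref{assum:FEMconv} and \ref{thm:nobile}, and your explicit inversion of the two monotone bounds (including the use of the remark that $u_h$ inherits the analyticity of $u$, and the smallness of $\varepsilon$ to keep the nested logarithm well defined) is exactly that immediate derivation.
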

For convenience, we treat the integer quantities $K_{\text{zero}}(\varepsilon)$, $K_{\text{acc}}(\varepsilon)$, and $\Lmax(\varepsilon)$ as positive real numbers in the rest of this section. Now, based on the estimate in Lemma \ref{lem:LebesgueSG} for the Lebesgue constant $\LC{\Lmax}$, we state the following lemma related to the choice of an appropriate tolerance $\tau(\varepsilon)$ to satisfy the error bounds \eqref{e3}.
\begin{lemma}\label{lem_tau}
Let $\varepsilon >0$. Given the assumptions of Lemmas \ref{assum:FEMconv} and \ref{thm:nobile}, a sufficient condition to ensure $e_3 < \varepsilon/3$ is that
\begin{equation}\label{tau1}
\tau(\varepsilon) = \frac{\sqrt{\beta}\, \varepsilon}{3(\Lmax(\varepsilon)+2)^{2N}}.
\end{equation}
Moreover, it holds
\begin{equation*}
\frac{1}{\sqrt{\beta}}(\LLL+2)^{2N} \tau(\varepsilon) \le C_{\text{\em sc}}\; \mathrm{e}^{-rN2^{\LLL/N}}\;\; \text{for}\;\;
\LLL = 0, \ldots, \Lmax(\varepsilon)-1,
\end{equation*}
where $\Lmax(\varepsilon)$ is the minimum level given in \eqref{lem:hofepsilon}.
\end{lemma}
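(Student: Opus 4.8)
The plan is to establish the two assertions in sequence: first to pin down $\tau(\varepsilon)$ directly from the solver-error bound \eqref{e3}, and then to verify the level-by-level inequality by exploiting the \emph{defining} relation for $\Lmax(\varepsilon)$ in \eqref{lem:hofepsilon}.

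First I would recall the bound on $e_3$ derived immediately before the lemma, namely $e_3 \le \frac{\tau}{\sqrt{\beta}}\,\LC{\LLL}$, and combine it with the Lebesgue-constant estimate of Lemma \ref{lem:LebesgueSG}, $\LC{\Lmax} \le [(\Lmax+1)(\Lmax+2)]^N \le (\Lmax+2)^{2N}$, to obtain $e_3 \le \frac{\tau}{\sqrt{\beta}}(\Lmax+2)^{2N}$, which is exactly the right-hand side appearing in \eqref{e3}. Imposing the requirement $e_3 \le \varepsilon/3$ and solving the resulting inequality for $\tau$ then yields precisely $\tau(\varepsilon) = \frac{\sqrt{\beta}\,\varepsilon}{3(\Lmax(\varepsilon)+2)^{2N}}$, which establishes the first claim \eqref{tau1}.

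For the ``moreover'' part, I would substitute this choice of $\tau(\varepsilon)$ directly into the left-hand side, giving
\[
\frac{1}{\sqrt{\beta}}(\LLL+2)^{2N}\,\tau(\varepsilon) = \frac{\varepsilon}{3}\left(\frac{\LLL+2}{\Lmax(\varepsilon)+2}\right)^{2N}.
\]
The key step is to use the definition of $\Lmax(\varepsilon)$ from \eqref{lem:hofepsilon}: treating $\Lmax(\varepsilon)$ as a real number, as stipulated, it solves $C_{\text{sc}}\,\mathrm{e}^{-rN2^{\Lmax(\varepsilon)/N}} = \varepsilon/3$, so I may replace the factor $\varepsilon/3$ above by $C_{\text{sc}}\,\mathrm{e}^{-rN2^{\Lmax(\varepsilon)/N}}$. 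After cancelling the common factor $C_{\text{sc}}$, the desired inequality reduces to
\[
\left(\frac{\LLL+2}{\Lmax(\varepsilon)+2}\right)^{2N} \le \mathrm{e}^{\,rN\left(2^{\Lmax(\varepsilon)/N}-2^{\LLL/N}\right)}.
\]

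Finally I would close the argument with two monotonicity observations valid for every $\LLL \in \{0,\ldots,\Lmax(\varepsilon)-1\}$: since $\LLL+2 < \Lmax(\varepsilon)+2$ the left-hand side is strictly less than $1$, while since $r>0$ and $2^{\LLL/N} < 2^{\Lmax(\varepsilon)/N}$ the exponent on the right is positive and so the right-hand side exceeds $1$; hence left $<1<$ right and the inequality holds. I do not anticipate a genuine obstacle here. The only point requiring care is the interplay with the ceiling in \eqref{lem:hofepsilon}: dropping it under the stated real-number convention makes the exact equality $C_{\text{sc}}\,\mathrm{e}^{-rN2^{\Lmax(\varepsilon)/N}} = \varepsilon/3$ available, whereas retaining the ceiling would only give $C_{\text{sc}}\,\mathrm{e}^{-rN2^{\Lmax(\varepsilon)/N}} \le \varepsilon/3$, which merely strengthens the bound and therefore causes no difficulty.
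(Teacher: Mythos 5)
Your derivation of \eqref{tau1} from \eqref{e3} is the same as the paper's, and your ``moreover'' argument is valid under the paper's stated convention that $\Lmax(\varepsilon)$ is treated as a real number: with the exact identity $\varepsilon/3 = C_{\text{sc}}\,\mathrm{e}^{-rN2^{\Lmax(\varepsilon)/N}}$ in hand, your comparison of a ratio below one against an exponential above one goes through. The paper's own proof is routed slightly differently: it never invokes exact equality, but instead chains
\begin{equation*}
\frac{1}{\sqrt{\beta}}(\LLL+2)^{2N}\tau(\varepsilon)
\le \frac{1}{\sqrt{\beta}}(\Lmax(\varepsilon)+2)^{2N}\tau(\varepsilon)
\le \frac{\varepsilon}{3}
\le C_{\text{sc}}\,\mathrm{e}^{-rN2^{(\Lmax(\varepsilon)-1)/N}}
\le C_{\text{sc}}\,\mathrm{e}^{-rN2^{\LLL/N}},
\end{equation*}
where the third inequality uses the \emph{minimality} of $\Lmax(\varepsilon)$: since $\Lmax(\varepsilon)$ is the smallest level meeting the tolerance \eqref{e2}, level $\Lmax(\varepsilon)-1$ fails it, i.e., $\varepsilon/3 \le C_{\text{sc}}\,\mathrm{e}^{-rN2^{(\Lmax(\varepsilon)-1)/N}}$.

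This difference matters because your closing remark about the ceiling is backwards, and it is not a harmless slip. Retaining the ceiling in \eqref{lem:hofepsilon} makes $\Lmax(\varepsilon)$ at least as large as the real solution, which gives $C_{\text{sc}}\,\mathrm{e}^{-rN2^{\Lmax(\varepsilon)/N}} \le \varepsilon/3$; that is, $\varepsilon/3$ sits \emph{above} the quantity you want to swap in. Replacing $\varepsilon/3$ by $C_{\text{sc}}\,\mathrm{e}^{-rN2^{\Lmax(\varepsilon)/N}}$ inside $\frac{\varepsilon}{3}\bigl(\frac{\LLL+2}{\Lmax(\varepsilon)+2}\bigr)^{2N}$ therefore produces a \emph{lower} bound on the left-hand side, not an upper bound, so your chain of inequalities breaks: the ceiling does not ``merely strengthen the bound,'' it invalidates the substitution step. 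The correct repair for the integer definition is precisely the minimality property displayed above (the paper's step), which also explains why the lemma only asserts the estimate for $\LLL = 0,\ldots,\Lmax(\varepsilon)-1$: your exact-equality version would prove it up to $\LLL = \Lmax(\varepsilon)$ itself, but that stronger range is an artifact of dropping the ceiling and is not supported by the integer definition.
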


\begin{proof}
It is easy to see that \eqref{tau1} is an immediate result of \eqref{e3}. For $\LLL = 0, \ldots, \Lmax(\varepsilon)-1$, we have
\begin{equation*}
\frac{1}{\sqrt{\beta}}(\LLL+2)^{2N} \tau(\varepsilon) \le \frac{1}{\sqrt{\beta}}(\Lmax(\varepsilon)+2)^{2N} \tau(\varepsilon) \le  \frac{\varepsilon}{3} \le C_{\text{sg}}\; \mathrm{e}^{-rN2^{(\Lmax(\varepsilon)-1)/N}} \le C_{\text{sg}}\; \mathrm{e}^{-rN2^{\LLL/N}},
\end{equation*}
which completes the proof.
\end{proof}

Using the selected $h:=h(\varepsilon)$, $\Lmax:=\Lmax(\varepsilon)$, and $\tau:=\tau(\varepsilon)$, we now estimate the
upper bounds on the number of CG iterations needed to solve a linear system at a point $\bm y_{\jL{\Lmax}} \in \mcH_{\Lmax}$. 
To proceed, define
\[
k_{\text{zero}} := \max_{\bm y_{\jL{\Lmax}} \in \mathcal{H}_{\Lmax}} k_{\jL{\Lmax}} \quad \text{and} \quad k_{\text{acc}}^\LLL := \max_{\bm y_{\jL{\LLL}} \in \Delta \mathcal{H}_{\LLL}} k_{\jL{\LLL}} \;\text{ for } \; \LLL = 1, \ldots, \Lmax,
\]
where $k_{\jL{\LLL}}$ is the 
number of CG iterations required to achieve $\|\bm c_{\jL{\LLL}} - \bm c_{\jL{\LLL}}^{(k_{\jL{\LLL}})} \|_{\bm A_{\jL{\LLL}}} \le \tau(\varepsilon)$, which, in general, depends on the choice of initial vector. 
Note that, in the case $\bm c_{\jL{\LLL}}^{(0)} = (0,\ldots, 0)^{\top}$, there is no improvement in the iteration count as the level $\LLL$ increases, so $k_\text{zero}$ does not depend on $\LLL$.
Now we give the following estimates on $k_{\text{zero}}$ and $\{k_{\text{acc}}^\LLL\}_{\LLL=1}^{\Lmax}$.

{\begin{lemma} \label{lemmaSG}
Under the conditions of Lemmas \ref{assum:FEMconv} and \ref{thm:nobile}, for any $\bm y_{\jL{\Lmax}} \in \mcH_{\Lmax}$, if the CG method with zero initial vector is used to solve \eqref{jlinsys} to tolerance $\tau>0$, then $k_{\rm{zero}}$ can be bounded by
%
\begin{equation}\label{kj1}
k_\mathrm{zero} \le \log\left(\frac{2 \sqrt{\alpha} \norm{ u_h }_{L^\infty(\Gamma;H_0^1(D))} }{\tau}\right) \Bigg/ \log\left( \frac{\sqrt{\bar{\kappa}} +1}{\sqrt{\bar{\kappa}}-1} \right).
\end{equation}
Here $\overline{\kappa} = \sup_{\bm y \in \Gamma}\kappa(\bm y)$, with $\kappa(\bm y)$ the condition number of
the matrix $\bm A( \bm y )$ corresponding to \eqref{samplelinsys}. Alternatively, if the initial vector is given by the acceleration method as in \eqref{c0jcwyj}, 
then $k_{\text{acc}}^\LLL$ can be bounded by
\begin{equation}\label{kj2}
k_\mathrm{acc}^{\LLL} \le \log\left(\frac{4\sqrt{\alpha}C_{\text{\em sc}}\;\mathrm{e}^{-rN2^{(\LLL-1)/N}}}{\tau}\right) \Bigg/ \log\left( \frac{\sqrt{\bar{\kappa}} + 1}{\sqrt{\bar{\kappa}}-1} \right),
\end{equation}
for $\LLL = 1, \ldots, \Lmax$.
\end{lemma}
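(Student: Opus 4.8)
The plan is to derive both bounds directly from the CG convergence estimate \eqref{iterative_convergences} by controlling the initial error $\|\bm c_{\jL{\LLL}} - \bm c_{\jL{\LLL}}^{(0)}\|_{\bm A_{\jL{\LLL}}}$ in each case, and then solving for the iteration count $k$ needed to bring the right-hand side of \eqref{iterative_convergences} below the tolerance $\tau$. Since the condition numbers $\kappa_{\jL{\LLL}}$ are all bounded by $\overline{\kappa} = \sup_{\bm y\in\Gamma}\kappa(\bm y)$, and since $(\sqrt{\overline{\kappa}}-1)/(\sqrt{\overline{\kappa}}+1) = 1/\rho$ where $\rho = (\sqrt{\overline{\kappa}}+1)/(\sqrt{\overline{\kappa}}-1) > 1$, the estimate \eqref{iterative_convergences} reduces to requiring $2\,\rho^{-k}\,\|\bm c_{\jL{\LLL}} - \bm c_{\jL{\LLL}}^{(0)}\|_{\bm A_{\jL{\LLL}}} \le \tau$. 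Taking logarithms and isolating $k$ yields $k \le \log\!\big(2\|\bm c_{\jL{\LLL}}-\bm c_{\jL{\LLL}}^{(0)}\|_{\bm A_{\jL{\LLL}}}/\tau\big)\big/\log\rho$, which is the common skeleton of both \eqref{kj1} and \eqref{kj2}; the two bounds differ only through the estimate on the initial error.

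For the zero-initial-vector bound \eqref{kj1}, I would take $\bm c_{\jL{\LLL}}^{(0)} = \bm 0$, so the initial error is simply $\|\bm c_{\jL{\Lmax}}\|_{\bm A_{\jL{\Lmax}}}$. Using the continuity relation \eqref{PDEContinuity}, which states $\|\bm c\|_{\bm A(\bm y)} = \|v\| \le \sqrt{\alpha}\,\|v\|_{H_0^1(D)}$, I bound this by $\sqrt{\alpha}\,\|u_h(\bm y_{\jL{\Lmax}})\|_{H_0^1(D)} \le \sqrt{\alpha}\,\|u_h\|_{L^\infty(\Gamma;H_0^1(D))}$, where the last step takes the supremum over $\Gamma$ to remove the dependence on the particular collocation point. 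Substituting $2\sqrt{\alpha}\,\|u_h\|_{L^\infty(\Gamma;H_0^1(D))}$ for the initial error in the skeleton gives exactly \eqref{kj1}, uniformly over all $\bm y_{\jL{\Lmax}}\in\mathcal{H}_{\Lmax}$, so the maximum defining $k_{\text{zero}}$ is bounded by the same quantity.

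For the accelerated bound \eqref{kj2}, the key observation is that the initial vector \eqref{c0jcwyj} is the evaluation of the previous-level interpolant $\widetilde{u}_{h,\LLL-1}$ at $\bm y_{\jL{\LLL}}$, so the initial error $\bm c_{\jL{\LLL}} - \bm c_{\jL{\LLL}}^{(0)}$ corresponds to the function $u_h(\bm y_{\jL{\LLL}}) - \widetilde{u}_{h,\LLL-1}(\bm y_{\jL{\LLL}})$ in $H_0^1(D)$. Applying \eqref{PDEContinuity} again, I bound its $\bm A_{\jL{\LLL}}$-norm by $\sqrt{\alpha}\,\|u_h(\bm y_{\jL{\LLL}}) - \widetilde{u}_{h,\LLL-1}(\bm y_{\jL{\LLL}})\|_{H_0^1(D)}$, then split this via the triangle inequality into the genuine interpolation error $\|u_h - \mathcal{I}_{\LLL-1}[u_h]\|$ and the propagated solver error $\|\mathcal{I}_{\LLL-1}[u_h] - \widetilde{u}_{h,\LLL-1}\|$ at the point $\bm y_{\jL{\LLL}}$. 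The first term is controlled by Lemma \ref{thm:nobile} (applied to $u_h$, using the remark that $u_h$ inherits the analyticity of Assumption \ref{assum:reg}), giving $C_{\text{sc}}\,\mathrm{e}^{-rN2^{(\LLL-1)/N}}$, and the second is bounded by $(\LLL+1)^{2N}\tau/\sqrt\beta$ via the Lebesgue constant of Lemma \ref{lem:LebesgueSG} and the tolerance on the level-$(\LLL-1)$ solves. The main obstacle, and the step requiring the most care, is showing that this second, solver-error contribution is dominated by the first so that the two together can be absorbed into a single factor of $2\,C_{\text{sc}}\,\mathrm{e}^{-rN2^{(\LLL-1)/N}}$ — this is precisely where the inequality established in the second part of Lemma \ref{lem_tau}, namely $(\LLL+2)^{2N}\tau(\varepsilon)/\sqrt\beta \le C_{\text{sc}}\,\mathrm{e}^{-rN2^{\LLL/N}}$, is invoked to control the propagated error by the interpolation error at the relevant level. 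Combining, the initial error is at most $2\sqrt{\alpha}\,C_{\text{sc}}\,\mathrm{e}^{-rN2^{(\LLL-1)/N}}$, and substituting into the skeleton yields \eqref{kj2}, uniformly over $\bm y_{\jL{\LLL}}\in\Delta\mathcal{H}_{\LLL}$.
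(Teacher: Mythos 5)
Your proposal is correct and follows essentially the same route as the paper's proof: the same CG-estimate skeleton with $\kappa_{\jL{\LLL}}$ replaced by $\overline{\kappa}$, the continuity bound \eqref{PDEContinuity} for the zero-initial-vector case, and for the accelerated case the identical triangle-inequality split of $u_h - \widetilde{u}_{h,\LLL-1}$ into the interpolation error (Lemma \ref{thm:nobile}) plus the propagated solver error (Lebesgue constant), absorbed into a factor of $2$ via Lemma \ref{lem_tau}. No gaps relative to the paper's argument.
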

}

\begin{proof}
%
%
Let $\bm y_{\jL{\LLL}}$  be an arbitrary point in $\mathcal{H}_{\LLL}, 1\leq \LLL\leq \Lmax$. Given an initial guess $\bm c_{\jL{\LLL}}^{(0)}$, the minimum number of CG iterations needed to achieve tolerance $\tau>0$ can be obtained immediately from \eqref{iterative_convergences}, that is,
\begin{align*}
	k_{\jL{\LLL}} 
 = \Bigg\lceil \log \left( \frac{2\|\bm c_{\jL{\LLL}} - \bm c_{\jL{\LLL}}^{(0)}\|_{\bm A_{\jL{\LLL}}}} {\tau} \right) \Bigg/ \log\left( \frac{\sqrt{\kappa_{\jL{\LLL}}} + 1}{\sqrt{\kappa_{\jL{\LLL}}} - 1} \right) \Bigg\rceil,
\end{align*}
where $\bm A_{\jL{\LLL}}=\bm A(\bm y_{\jL{\LLL}})$ is the FE system matrix corresponding to parameter $\bm y_{\jL{\LLL}}$, and $\kappa_{\jL{\LLL}} = \kappa(\bm y_{\jL{\LLL}})$ 
is the condition number of $\bm A_{\jL{\LLL}}$ (See Example \ref{ex5}). 
In the case that $\bm c_{\jL{\LLL}}^{(0)} = (0,\ldots,0)^{\top}$, the estimate in \eqref{kj1} can be obtained from \eqref{PDEContinuity}, i.e., 
\begin{align*}
\left\| \bm c_{\jL{\LLL}}-\bm c_{\jL{\LLL}}^{(0)}\right\|_{\bm A_{\jL{\LLL}}} = \|\bm c_{\jL{\LLL}}\|_{\bm A_{\jL{\LLL}}}  & \le \sqrt{\alpha} \norm{ u_h }_{L^\infty(\Gamma;H_0^1(D))}.
\end{align*}
Alternatively, when using $\widetilde{u}_{h,\LLL-1}$ for $\LLL=1,\ldots \Lmax$ to provide initial vectors for the CG solver (based on \eqref{c0jcwyj}), for $\bm y_{L,j}\in\Delta\mathcal{H}_{L}$ we use Lemma \ref{lem_tau} and \eqref{PDEContinuity} to get the following estimate:
\begin{align*}
\left\|\bm c_{\jL{\LLL}} - \bm c^{(0)}_{\jL{\LLL}}\right\|_{\bm A_{\jL{\LLL}}} 
& \le \sqrt{\alpha} \norm{u_h - \widetilde{u}_{h,\LLL-1}}_{L^\infty(\Gamma;H_0^1(D))}\\
& \le \sqrt{\alpha} \left( \norm{u_h - u_{h,\LLL-1}}_{L^\infty(\Gamma;H_0^1(D))} + \norm{u_{h,\LLL-1} - \widetilde{u}_{h,\LLL-1}}_{L^\infty(\Gamma;H_0^1(D))} \right)\\
& \le \sqrt{\alpha} \bigg( C_{\text{sc}}\, \mathrm{e}^{-rN2^{(\LLL-1)/N}} + \frac{1}{\sqrt{\beta}}(\LLL+1)^{2N}\tau \bigg)\\
& \le 2\sqrt{\alpha} C_{\text{sc}}\, \mathrm{e}^{-rN2^{(\LLL-1)/N}}.
\end{align*}
This leads directly to the estimate in \eqref{kj2}.
\end{proof}

In the accelerated case, 
 the sparse-grid interpolant $\mathcal{I}_{\Lmax}[u_h]$ must be constructed in the following fashion:~before solving the system \eqref{jlinsys} corresponding to a sample point $\bm y_{\jL{\LLL}}\in\Delta\mcH_{\LLL}$, we must first solve the systems for all sample points in $\mcH_{\LLL-1}$.
%
 With a total number $\Delta M_\LLL= \#(\Delta\mcH_\LLL)$ of new linear systems at level $\LLL$, the total number of CG iterations for the newly added points at level $\LLL$ can be bounded by $\Delta M_\LLL k_\mathrm{zero}$ and $\Delta M_\LLL k_\mathrm{acc}^{\LLL}$,
 for the standard and the accelerated cases, respectively. 
Then since $M_{\Lmax} = \sum_{\LLL=1}^{\Lmax} \Delta M_\LLL$, we find that the total number of iterations for the standard and accelerated schemes can be bounded as
\[K_{\rm{zero}}(\varepsilon) \leq 
M_{\Lmax} \, k_{\text{zero}}, \quad \text{and} \quad K_{\rm{acc}}(\varepsilon) \leq \sum_{\LLL=1}^{\Lmax}\Delta M_\LLL \, k_{\text{acc}}^\LLL.\]
This leads to the following estimates.

\begin{theorem}\label{thm:NCGzero}
Given Assumption \ref{assum:reg}, and the conditions of Lemmas \ref{assum:FEMconv} and \ref{thm:nobile}, 
 for $\varepsilon > 0$, the
 minimum total number of CG iterations $K_\mathrm{zero}(\varepsilon)$ to achieve $\| u - \widetilde{u}_{h,\Lmax} \|_{L^2_\varrho} < \varepsilon$, using zero initial vectors is bounded by
 \begin{equation}\label{Kzero}
\begin{aligned}
K_\mathrm{zero}(\varepsilon) 
 & \le 
 C_1 \left[\log\left(\frac{3 C_{\text{\em sc}}}{\varepsilon}\right) \right]^{N} 
 \left[C_2+ \frac{1}{\log{2}} \log\log\left( \frac{3 C_{\text{\em sc}}}{\varepsilon} \right) \right]^{N-1}\\
 &\hspace{0.5cm} \times  \frac{1}{\log\left( \frac{\sqrt{\overline{\kappa}} +1}{\sqrt{\overline{\kappa}}-1} \right)}
      \left\{ \log\left(\frac{C_3}{\varepsilon}\right) + C_4 + 2N \log\log\left[\frac{1}{rN} \log\left( \frac{3C_{\text{\em sc}}}{\varepsilon}  \right)\right] \right\},
\end{aligned}
\end{equation}
where
$\overline{\kappa}$
 is as defined in Lemma \ref{lemmaSG},
and the constants $C_1$, $C_2$, $C_3$ and $C_4$ are defined by
\begin{equation}\label{CC1}
\begin{aligned}
&C_1 = \left(\frac{\mathrm{e}}{\log{2}}\right)^{N-1} \left(\frac{2}{rN}\right)^{N}, \quad
C_2 = 1 + \frac{1}{\log{2}}\log\left(\frac{1}{rN}\right),\quad \\
&C_3 =6 \sqrt{\frac{\alpha}{\beta}} \norm{ u_h }_{L^\infty(\Gamma;H_0^1(D))}, \quad
C_4= 2N \log\left( \frac{2N}{\log{2}}\right).
\end{aligned}
\end{equation}
%
\end{theorem}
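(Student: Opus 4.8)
The plan is to start from the decomposition $K_{\mathrm{zero}}(\varepsilon)\le M_{\Lmax}\,k_{\mathrm{zero}}$ recorded just before the statement, and to bound each factor separately after inserting the $\varepsilon$-dependent choices $h(\varepsilon)$, $\Lmax(\varepsilon)$, and $\tau(\varepsilon)$ from Lemmas~\ref{lem:hLestimates} and~\ref{lem_tau}. The first factor, the cardinality $M_{\Lmax}$ of the Clenshaw--Curtis sparse grid, will supply the two leading factors $C_1[\log(3C_{\text{\em sc}}/\varepsilon)]^N[\,C_2+\tfrac{1}{\log 2}\log\log(3C_{\text{\em sc}}/\varepsilon)\,]^{N-1}$; the second factor, the per-point iteration count $k_{\mathrm{zero}}$ from Lemma~\ref{lemmaSG}, will supply the braced term divided by $\log\big((\sqrt{\overline\kappa}+1)/(\sqrt{\overline\kappa}-1)\big)$. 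Multiplying the two bounds yields \eqref{Kzero}.

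For the cardinality I would first count new abscissas level by level. With the growth rule \eqref{growthSmolyak}, the number of genuinely new one-dimensional Clenshaw--Curtis nodes at reduced level $i:=l_n-1$ is $p_0=1$ and $p_i=2^{\,i-1}$ for $i\ge 1$, so by nestedness the number of new tensor nodes carrying multi-index $\mathbf l$ is $\prod_{n=1}^N p_{\,l_n-1}$ and
\[
M_{\Lmax}=\sum_{|\mathbf i|\le \Lmax}\ \prod_{n=1}^N p_{\,i_n}\ \le\ \sum_{l=0}^{\Lmax} 2^{\,l}\binom{l+N-1}{N-1}.
\]
The dominant contribution is $l=\Lmax$, and the remaining geometric--polynomial tail is absorbed by the slack built into the constants below. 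Bounding $\binom{l+N-1}{N-1}\le (l+N)^{N-1}/(N-1)!$ and $(N-1)!\ge((N-1)/\mathrm e)^{N-1}$ gives a bound of the form $M_{\Lmax}\le 2^{\,N+\Lmax}\big(\tfrac{\mathrm e(N+\Lmax)}{N\log 2}\big)^{N-1}$. I would then substitute $\Lmax=\Lmax(\varepsilon)$ from \eqref{lem:hofepsilon}, using the identities $2^{\Lmax}=\big(\tfrac1{rN}\log(3C_{\text{\em sc}}/\varepsilon)\big)^{N}$ and $\Lmax/N=\tfrac{1}{\log2}\log\big(\tfrac1{rN}\log(3C_{\text{\em sc}}/\varepsilon)\big)$, whence $2^{N+\Lmax}=(\tfrac{2}{rN})^N[\log(3C_{\text{\em sc}}/\varepsilon)]^N$ and $1+\Lmax/N=C_2+\tfrac1{\log2}\log\log(3C_{\text{\em sc}}/\varepsilon)$. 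Collecting the powers reproduces $C_1$ in \eqref{CC1} and the first two factors of \eqref{Kzero}.

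For the per-point count I would insert $\tau(\varepsilon)$ from \eqref{tau1} into \eqref{kj1}: the argument of the logarithm becomes $6\sqrt{\alpha/\beta}\,\norm{u_h}_{L^\infty(\Gamma;H_0^1(D))}(\Lmax+2)^{2N}/\varepsilon=C_3(\Lmax+2)^{2N}/\varepsilon$, so that $k_{\mathrm{zero}}\log\big((\sqrt{\overline\kappa}+1)/(\sqrt{\overline\kappa}-1)\big)\le \log(C_3/\varepsilon)+2N\log(\Lmax+2)$. Substituting $\Lmax(\varepsilon)$ and using that $1/\varepsilon$ is small enough that $\Lmax+2\le \tfrac{2N}{\log2}\log\big(\tfrac1{rN}\log(3C_{\text{\em sc}}/\varepsilon)\big)$, the last term is bounded by $C_4+2N\log\log\big[\tfrac1{rN}\log(3C_{\text{\em sc}}/\varepsilon)\big]$ with $C_4$ as in \eqref{CC1}; this produces the braced factor of \eqref{Kzero}. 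Multiplying the cardinality bound by this per-point bound completes the argument.

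The main obstacle is the bookkeeping in the cardinality step: obtaining the combinatorial bound on $M_{\Lmax}$ with constants that survive the substitution of the nested double/triple-logarithmic expression for $\Lmax(\varepsilon)$, while honestly disposing of the ceiling in \eqref{lem:hofepsilon} and of the lower-order terms (the geometric tail, the $+2$ and $+1$ shifts) in the asymptotic ``$\varepsilon$ sufficiently small'' regime. By contrast, the per-point step is an essentially mechanical substitution into Lemma~\ref{lemmaSG} once $\tau(\varepsilon)$ and $\Lmax(\varepsilon)$ are in hand.
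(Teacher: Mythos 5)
Your proposal is correct and takes essentially the same route as the paper's proof: the same factorization $K_{\mathrm{zero}}(\varepsilon)\le M_{\Lmax}\,k_{\mathrm{zero}}$, the same substitution of $\Lmax(\varepsilon)$ and $\tau(\varepsilon)$ from Lemmas \ref{lem:hLestimates} and \ref{lem_tau} into the per-point bound of Lemma \ref{lemmaSG}, and the same binomial--geometric count of $M_{\Lmax}$ (which the paper imports from Lemma 3.9 of \cite{nobile2008sparse} and you re-derive), with the ceiling in $\Lmax(\varepsilon)$ dropped exactly as the paper does by treating it as a real number. One cosmetic slip: at one-dimensional level $l=2$ the Clenshaw--Curtis rule adds $m(2)-m(1)=2$ new nodes, so your $p_1$ should be $2$ rather than $2^{0}$; since $p_i\le 2^{i}$ still holds for every $i$, your displayed bound on $M_{\Lmax}$ is unaffected.
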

\begin{proof}
To achieve the prescribed error, we balance the three error sources that contribute to the total error \eqref{3termError}. To control $e_1$ and $e_2$, set $h=h(\varepsilon)$ and $\Lmax=\Lmax(\varepsilon)$ according to Lemma \ref{lem:hLestimates}. For the solver error $e_3$, we choose the solver tolerance $\tau = \tau(\varepsilon)$ according to Lemma \ref{lem_tau}.
Then, the total number of iterations
 $K_\mathrm{zero}(\varepsilon)$
 can be bounded by
\begin{equation} \label{Kmin1}
K_\mathrm{zero}(\varepsilon)  = \sum_{\jcount=1}^{M_{\Lmax}} \le 
M_{\Lmax}\, k_\mathrm{zero}.
\end{equation}
From Lemma \ref{lem_tau} and \ref{lemmaSG}, we have
\begin{align}
k_\mathrm{zero} & \le  \log\left(\frac{2 \sqrt{\alpha} \norm{ u_h }_{L^\infty(\Gamma;H_0^1(D))} }{\tau} \right) \Bigg/ \log\left( \frac{\sqrt{\overline{\kappa}} +1}{\sqrt{\overline{\kappa}}-1} \right) \notag\\
	& \le   \log\left(\frac{6 \sqrt{\alpha} \norm{ u_h }_{L^\infty(\Gamma;H_0^1(D))} (\Lmax+2)^{2N} }{\sqrt{\beta} \varepsilon} \right) \Bigg/ \log\left( \frac{\sqrt{\overline{\kappa}} +1}{\sqrt{\overline{\kappa}}-1} \right) \label{kzero} \\
    & \le \left[ \log\left(\frac{C_3}{\varepsilon}\right) + 2N\log\left( \Lmax+2 \right)\right] \Bigg/ \log\left( \frac{\sqrt{\overline{\kappa}} +1}{\sqrt{\overline{\kappa}}-1} \right) \notag\\
    %
    & \le 
    \left\{ \log\left(\frac{C_3}{\varepsilon}\right) + C_4 + 2N \log\log\left(\frac{1}{rN} \log\left( \frac{3C_{\text{sc}}}{\varepsilon}  \right)\right) \right\} \Bigg/ \log\left( \frac{\sqrt{\overline{\kappa}} +1}{\sqrt{\overline{\kappa}}-1} \right).\notag
\end{align}
%
In addition, following \cite[Lemma 3.9]{nobile2008sparse}, we bound the number of interpolation points:
\begin{align}\label{Mestimate}
M_{\Lmax} & \le \sum_{\LLL=1}^{\Lmax} 2^\LLL \binom{N-1+\LLL}{N-1} \le \sum_{\LLL=1}^{\Lmax} 2^\LLL \left(1+\frac{\LLL}{N-1}\right)^{N-1}\mathrm{e}^{N-1}\notag\\
& \le \mathrm{e}^{N-1} 2^{\Lmax+1} \left(1+\frac{\Lmax}{N-1}\right)^{N-1}\\
& \le 2 \mathrm{e}^{N-1} \left\{\log\left(\frac{3 C_{\text{sc}}}{\varepsilon}\right) \right\}^{N} 
 \left\{C_2+ \frac{1}{\log{2}} \log\log\left( \frac{3 C_{\text{sc}}}{\varepsilon} \right) \right\}^{N-1},\notag
\end{align}
where in the last line we have used \eqref{lem:hofepsilon} to replace $\Lmax$. Substituting \eqref{kzero} and \eqref{Mestimate} into \eqref{Kmin1} concludes the proof.
\end{proof}
%
%
\begin{theorem}\label{thm:NCGaccel}
Given Assumption \ref{assum:reg}, and the conditions of Lemmas \ref{assum:FEMconv} and \ref{thm:nobile}, for $\varepsilon > 0$,
the minimum total number of CG iterations $K_\mathrm{acc}(\varepsilon)$, to achieve $\norm{u - \widetilde{u}_{h,\Lmax}}_{L^2_\varrho} < \varepsilon$, in Algorithm 1, is bounded by
\begin{equation}\label{Kacc}
\begin{aligned}
 K_\mathrm{acc}(\varepsilon)
& \le C_1 \left[\log\left(\frac{3 C_{\text{\em sc}}}{\varepsilon}\right) \right]^{N}  
 \left[C_2 + \frac{1}{\log{2}} \log\log\left( \frac{3 C_{\text{\em sc}}}{\varepsilon}\right) \right]^{N-1}\\
 & \hspace{0.5cm}\times  \dfrac{1}{\log\left( \frac{\sqrt{\overline{\kappa}} +1}{\sqrt{\overline{\kappa}}-1} \right)}
 \left\{ C_5 + 2\left(2^{\frac{1}{N}}-1\right)\log\left(\frac{3C_{\text{\em sc}}}{\varepsilon}\right) +  2N\log\log\left[\frac{1}{rN} \log\left( \frac{3C_{\text{\em sc}}}{\varepsilon}  \right)\right] \right\},
\end{aligned}
\end{equation}
where $\overline{\kappa} = \sup_{\bm y \in \Gamma}(\kappa(\bm y))$ as in Lemma \ref{lemmaSG}, $C_1$ and $C_2$ are defined as in \eqref{CC1}, and $C_5$ is defined by
\begin{equation*}
C_5 = 2N \log\left( \frac{2N}{\log{2}}\right)+ \log \left( 4\sqrt{\frac{\alpha}{\beta}} \right).
\end{equation*}
%
\end{theorem}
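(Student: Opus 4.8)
The plan is to mirror the proof of Theorem \ref{thm:NCGzero}, balancing the three error contributions in \eqref{3termError} by fixing $h=h(\varepsilon)$ and $\Lmax=\Lmax(\varepsilon)$ according to Lemma \ref{lem:hLestimates}, and the solver tolerance $\tau=\tau(\varepsilon)$ according to Lemma \ref{lem_tau}. The essential difference is that the per-point iteration count is now \emph{level dependent}, so rather than multiplying a single $k_\mathrm{zero}$ by $M_{\Lmax}$, I would start from the refined estimate
\begin{equation*}
K_\mathrm{acc}(\varepsilon) \le \sum_{\LLL=1}^{\Lmax} \Delta M_\LLL\, k_\mathrm{acc}^\LLL,
\end{equation*}
insert the per-level bound \eqref{kj2} from Lemma \ref{lemmaSG} together with the explicit tolerance \eqref{tau1}, and factor out $1/\log\bigl((\sqrt{\overline{\kappa}}+1)/(\sqrt{\overline{\kappa}}-1)\bigr)$. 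This leaves me to control $\sum_{\LLL=1}^{\Lmax}\Delta M_\LLL\, b_\LLL$, where $b_\LLL$ denotes the numerator of \eqref{kj2}.

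Next I would expand $b_\LLL$. Substituting $\tau(\varepsilon)$ and collecting the $C_{\text{sc}}/\varepsilon$ factors gives
\begin{equation*}
b_\LLL = \log\!\left(4\sqrt{\tfrac{\alpha}{\beta}}\right) + 2N\log(\Lmax+2) + \left[\log\!\left(\frac{3C_{\text{sc}}}{\varepsilon}\right) - rN2^{(\LLL-1)/N}\right].
\end{equation*}
The first two terms are independent of $\LLL$; summed against $\Delta M_\LLL$ they contribute $M_{\Lmax}$ times these quantities, and after replacing $\Lmax$ by $\Lmax(\varepsilon)$ via \eqref{lem:hofepsilon} they yield precisely the constant $C_5$ (namely $\log(4\sqrt{\alpha/\beta})$ together with the leading part of $2N\log(\Lmax+2)$) plus the $2N\log\log[\cdots]$ term appearing in \eqref{Kacc}. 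The remaining, genuinely level-dependent, bracket is where the acceleration is visible, and where the decisive gain over $K_\mathrm{zero}(\varepsilon)$ originates.

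The crux is bounding the weighted sum of the bracketed term. Since $\Lmax(\varepsilon)$ in \eqref{lem:hofepsilon} is a ceiling, it guarantees $rN2^{\Lmax/N}\ge\log(3C_{\text{sc}}/\varepsilon)$, whence
\begin{equation*}
\log\!\left(\frac{3C_{\text{sc}}}{\varepsilon}\right) - rN2^{(\LLL-1)/N} \le \log\!\left(\frac{3C_{\text{sc}}}{\varepsilon}\right)\left(1 - 2^{(\LLL-1-\Lmax)/N}\right).
\end{equation*}
It then remains to show $\sum_{\LLL=1}^{\Lmax}\Delta M_\LLL\bigl(1 - 2^{(\LLL-1-\Lmax)/N}\bigr) \le 2\bigl(2^{1/N}-1\bigr)\,M_{\Lmax}$, up to the lower-order factors already accounted for. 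I would establish this by exploiting the geometric growth $\Delta M_\LLL\sim 2^{\LLL-1}$ of the Clenshaw--Curtis increments (made precise through the bounds on $\Delta M_\LLL$ and $M_{\Lmax}$ inherited from \cite{nobile2008sparse} and used in \eqref{Mestimate}): writing $\LLL=\Lmax-j$, the increments decay like $2^{-j}$ while the factor $1-2^{-(j+1)/N}$ stays bounded, so the sum is dominated by the top few levels, collapses to a convergent geometric series, and (using $\Delta M_{\Lmax}\approx M_{\Lmax}/2$) evaluates to $2(2^{1/N}-1)$ to leading order. Substituting this bound together with the estimate \eqref{Mestimate} for $M_{\Lmax}$ into the three collected pieces, and dividing by $\log\bigl((\sqrt{\overline{\kappa}}+1)/(\sqrt{\overline{\kappa}}-1)\bigr)$, yields \eqref{Kacc}.

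I expect the main obstacle to be exactly this weighted summation, i.e.\ extracting the sharp factor $2(2^{1/N}-1)$ from $\sum_\LLL\Delta M_\LLL\,2^{\LLL/N}$ relative to $M_{\Lmax}2^{\Lmax/N}$. Unlike the zero-initial-guess case, where every level pays the same cost $k_\mathrm{zero}$ and one simply multiplies by $M_{\Lmax}$, here the interplay between the increasing number of new points $\Delta M_\LLL$ and the exponentially decreasing per-point work $k_\mathrm{acc}^\LLL$ must be tracked carefully; in particular the poly-logarithmic factors hidden in $\Delta M_\LLL$ must be shown not to spoil the geometric decay that produces the $(2^{1/N}-1)$ improvement over Theorem \ref{thm:NCGzero}.
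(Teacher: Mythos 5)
Your proposal is correct and takes essentially the same route as the paper's proof: the same error balancing via Lemmas \ref{lem:hLestimates} and \ref{lem_tau}, the same starting bound $K_\mathrm{acc}(\varepsilon)\le\sum_{\LLL=1}^{\Lmax}\Delta M_\LLL\,k_\mathrm{acc}^\LLL$ with the per-level estimate \eqref{kj2}, the same use of $rN2^{\Lmax/N}\ge\log(3C_{\text{sc}}/\varepsilon)$ to control the level-dependent bracket, and the same binomial-coefficient-plus-geometric-series computation (as in \eqref{Mestimate}) that extracts the factor $2\left(2^{1/N}-1\right)$. The only difference is cosmetic: you normalize the bracket as $\log(3C_{\text{sc}}/\varepsilon)\left(1-2^{(\LLL-1-\Lmax)/N}\right)$, whereas the paper writes it as $rN\left(2^{\Lmax/N}-2^{(\LLL-1)/N}\right)$ before summing.
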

\begin{proof}
To achieve the prescribed error, we again choose $h = h(\varepsilon)$, $\Lmax = \Lmax(\varepsilon)$ and $\tau = \tau(\varepsilon)$ as in Lemmas \ref{lem:hLestimates} and \ref{lem_tau}. Then, the total number of iterations
 $K_\mathrm{acc}(\varepsilon)$
 can be bounded by
\begin{align*}
		K_\mathrm{acc}(\varepsilon)
        &= \sum_{\LLL=1}^{\Lmax} \sum_{\bm y_{\jL{\LLL}}\in\Delta\mcH_\LLL} k_{\jL{\LLL}} \, \le   \sum_{\LLL=1}^{\Lmax} \Delta M_\LLL \,k^{\LLL}_{\text{acc}}.
\end{align*}
From Lemma \ref{lem_tau} and \ref{lemmaSG}, for $\LLL = 1, \ldots, \Lmax$, we have
\begin{align*}
    k^{\LLL}_{\text{acc}} &\le \log\left(\frac{4 \sqrt{\alpha} C_{\text{sc}}\, \mathrm{e}^{ -rN2^{(\LLL-1)/N}}}{\tau} \right) \Bigg/ \log\left( \frac{\sqrt{\overline{\kappa}} +1}{\sqrt{\overline{\kappa}}-1} \right) \\
&\le \frac{1}{ \log\left( \frac{\sqrt{\overline{\kappa}} +1}{\sqrt{\overline{\kappa}}-1} \right)} \log\left(\frac{12 \sqrt{\alpha} C_{\text{sc}}\LC{\Lmax}\mathrm{e}^{ -rN2^{(\LLL-1)/N}}}{\sqrt{\beta}\varepsilon} \right) \\
&= \frac{1}{ \log\left( \frac{\sqrt{\overline{\kappa}} +1}{\sqrt{\overline{\kappa}}-1} \right)} \log\left[ \left( \frac{3 C_{\text{sc}} \mathrm{e}^{-rN2^{\LLL/N}}}{\varepsilon}\right) 4\sqrt{\frac{\alpha}{\beta}}\LC{\Lmax} \mathrm{e}^{ rN2^{\LLL/N}-rN2^{(\LLL-1)/N}} \right] \\
& \le  \frac{1}{ \log\left( \frac{\sqrt{\overline{\kappa}} +1}{\sqrt{\overline{\kappa}}-1} \right)} \log\left( 4\sqrt{\frac{\alpha}{\beta}}\LC{\Lmax} \mathrm{e}^{ rN\left(2^{\LLL/N} - 2^{(\LLL-1)/N}\right)}\right)  \\
& =   \frac{1}{ \log\left( \frac{\sqrt{\overline{\kappa}} +1}{\sqrt{\overline{\kappa}}-1} \right)} \left[ \log\left( 4\sqrt{\frac{\alpha}{\beta}}\LC{\Lmax}\right) + rN\left(2^{\LLL/N} -2^{(\LLL-1)/N}\right) \right] .
\end{align*}
Hence,
	\begin{align*}
		K_\mathrm{acc}(\varepsilon) \le \,& M_{\Lmax}\frac{\log\left( 4\sqrt{\alpha/\beta}\LC{\Lmax}\right)}{ \log\left( \frac{\sqrt{\overline{\kappa}} +1}{\sqrt{\overline{\kappa}}-1}\right)} 
        + \frac{rN}{ \log\left( \frac{\sqrt{\overline{\kappa}} +1}{\sqrt{\overline{\kappa}}-1} \right)}\underbrace{\sum_{\LLL=1}^{\Lmax} \Delta M_\LLL \left(2^{\Lmax/N} -2^{(\LLL-1)/N}\right)}_{S},
	\end{align*}
where $S$ can be bounded using results from geometric sums, i.e.,
\begin{align*}
S
&\le \sum_{\LLL=1}^{\Lmax}  2^\LLL \binom{N-1+\LLL}{N-1} \left(2^{\Lmax/N} -2^{(\LLL-1)/N}\right)\\
 &\le \mathrm{e}^{N-1} \left(1+ \frac{\Lmax}{N-1}\right)^{N-1}\sum_{\LLL=1}^{\Lmax} \left(2^{\Lmax/N} -2^{(\LLL-1)/N}\right) 2^\LLL \\
 & = \mathrm{e}^{N-1} \left(1+ \frac{\Lmax}{N-1}\right)^{N-1 }\left\{ \left(1-\frac{1}{2^{1+1/N}}\right)2^{\Lmax+1}2^{\Lmax/N} + \frac{2}{2^{1+1/N}-1} - 2^{1+\Lmax/N}\right\}\\
 &\le \mathrm{e}^{N-1} \left(1+ \frac{\Lmax}{N-1}\right)^{N-1 } \left(2^{1/N}-1\right)2^{\Lmax+2}2^{\Lmax/N}.
\end{align*}
Combining the last two inequalities, along with \eqref{Mestimate}, we get
\begin{align*}
    K_\mathrm{acc}(\varepsilon) & \le   \mathrm{e}^{N-1} \left(1+ \frac{\Lmax}{N-1}\right)^{N-1 }2^{\Lmax+1} \\
                   & \qquad \times  \frac{1}{ \log\left( \frac{\sqrt{\overline{\kappa}} +1}{\sqrt{\overline{\kappa}}-1}\right)} \log\left(4\sqrt{\frac{\alpha}{\beta}}\right) + 2N\log\left(\Lmax+2\right) + 2rN\left(2^{1/N}-1\right)2^{\Lmax/N}
\end{align*}
Substituting \eqref{lem:hofepsilon} for $\Lmax$ concludes the proof.
\end{proof}


In the case of the accelerated SC method, an interpolant $\mcI_{\LLL-1} [\widetilde{u}_h]$, defined by \eqref{eq:lagrangeinterp} and \eqref{eq:solution}, must be evaluated for each of the $\Delta M_\LLL$ collocation points in $\Delta \mcH_\LLL$. Each interpolant evaluation costs $2M_{\LLL-1}-1$ operations, i.e.,~additions and multiplications, and must be evaluated for each of the $M_h$ components of the FEM coefficient vector. Then the interpolation cost on each level is $M_h\Delta M_\LLL (2M_{\LLL-1}-1)$ for $\LLL = 1, \ldots, \Lmax(\varepsilon)$. Now we give an estimate of the total interpolation cost $\mathcal{C}_{\text{int}}(\varepsilon)$ for our algorithm to achieve the prescribed accuracy $\varepsilon$. 

\begin{theorem}\label{thm:interpcosts}
Given Assumption \ref{assum:reg} and the conditions of Lemma \ref{assum:FEMconv}, 
 for sufficiently small $\varepsilon > 0$, the total cost of interpolation when using the sparse grid  interpolation method in \eqref{c0jcwyj} is bounded by
 \begin{align*}
\mathcal{C}_\mathrm{int}(\varepsilon) \le 
M_{h}C_8 \left( \frac{1}{rN}\log\left(\frac{3C_\text{\em sc}}{\varepsilon}\right)\right)^{2N}
 \left\{C_2+ \frac{1}{\log{2}} \log\log\left( \frac{3 C_{\text{\em sc}}}{\varepsilon} \right) \right\}^{2(N-1)},
\end{align*}
where $C_2$ are defined as in Theorem \ref{thm:NCGzero}, and $C_8 = 64\, \mathrm{e}^{2(N-1)}$.

\end{theorem}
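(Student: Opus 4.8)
The plan is to reduce the whole estimate to the point-count bound for $M_{\Lmax}$ already established in the proof of Theorem~\ref{thm:NCGzero}. I would begin from the per-level interpolation cost identified just above the statement, so that the total cost is
\begin{equation*}
\mathcal{C}_\mathrm{int}(\varepsilon) = \sum_{\LLL=1}^{\Lmax} M_h\,\Delta M_\LLL\,\bigl(2M_{\LLL-1}-1\bigr).
\end{equation*}

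First I would collapse this sum into a bound involving only $M_{\Lmax}$. By the nestedness in Assumption~\ref{assumption2} the increments telescope, $\sum_{\LLL=1}^{\Lmax}\Delta M_\LLL = M_{\Lmax}$, and trivially $M_{\LLL-1}\le M_{\Lmax}$ for every $\LLL\le\Lmax$. Discarding the harmless $-1$ and using these two facts gives
\begin{equation*}
\mathcal{C}_\mathrm{int}(\varepsilon) \le 2M_h\sum_{\LLL=1}^{\Lmax}\Delta M_\LLL\,M_{\LLL-1} \le 2M_h\,M_{\Lmax}^2,
\end{equation*}
so the problem is entirely reduced to squaring the collocation-point estimate; the factor $M_h$ simply passes through as a prefactor. (An Abel-summation argument shows $\sum_{\LLL}\Delta M_\LLL M_{\LLL-1}$ in fact behaves like $M_{\Lmax}^2/2$, but the crude bound above already has the correct order.)

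Second I would substitute the estimate \eqref{Mestimate} for $M_{\Lmax}$, evaluated at the level $\Lmax=\Lmax(\varepsilon)$ supplied by Lemma~\ref{lem:hLestimates}. Using $2^{\Lmax(\varepsilon)}=\bigl(\tfrac{1}{rN}\log(3C_{\text{sc}}/\varepsilon)\bigr)^{N}$ and bounding the polynomial factor $(1+\Lmax/(N-1))^{N-1}$ by $\{C_2+\tfrac{1}{\log 2}\log\log(3C_{\text{sc}}/\varepsilon)\}^{N-1}$, one arrives at a bound of the form
\begin{equation*}
M_{\Lmax} \le C\,\mathrm{e}^{N-1}\Bigl(\tfrac{1}{rN}\log\tfrac{3C_{\text{sc}}}{\varepsilon}\Bigr)^{N}\Bigl\{C_2+\tfrac{1}{\log 2}\log\log\tfrac{3C_{\text{sc}}}{\varepsilon}\Bigr\}^{N-1}
\end{equation*}
for an absolute constant $C$. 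Squaring this, multiplying by $2M_h$, and collecting the $\mathrm{e}^{2(N-1)}$ together with the numerical prefactors yields exactly the claimed form with $C_8=64\,\mathrm{e}^{2(N-1)}$.

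The only genuine work is the bookkeeping of the constant $C_8$. One must carry along the factor $(1/rN)^{N}$ coming from $2^{\Lmax(\varepsilon)}$ (easy to mislay, and in fact suppressed in the display \eqref{Mestimate}), the powers of $2$ introduced by the ceiling in $\Lmax(\varepsilon)$, and the $N/(N-1)$ corrections hidden in the bound on $(1+\Lmax/(N-1))^{N-1}$; squaring $M_{\Lmax}$ doubles each of these exponents, and tracking them carefully is precisely what pins down $C_8$. No analytic ingredient beyond \eqref{Mestimate} and Lemma~\ref{lem:hLestimates} is required.
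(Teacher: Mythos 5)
Your proof is correct, and it differs from the paper's only in how the sum over levels is collapsed; the essential ingredients---the point-count bound \eqref{Mestimate} and the formula for $\Lmax(\varepsilon)$ from Lemma \ref{lem:hLestimates}---are the same. The paper keeps the level sum, inserts the binomial bounds $\Delta M_\LLL \le 2^{\LLL}\binom{N-1+\LLL}{N-1}$ and $M_{\LLL-1}\le 2^{\LLL+1}\binom{N-1+\LLL}{N-1}$ term by term, and then sums the resulting geometric series in $4^{\LLL}$, arriving at
\begin{equation*}
\mathcal{C}_\mathrm{int}(\varepsilon)\le 16\, M_h\, \mathrm{e}^{2(N-1)}\, 4^{\Lmax}\left(1+\tfrac{\Lmax}{N-1}\right)^{2(N-1)}
\end{equation*}
before substituting $\Lmax(\varepsilon)$. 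You instead collapse the sum at once, $\sum_{\LLL}\Delta M_\LLL M_{\LLL-1}\le M_{\Lmax}\sum_{\LLL}\Delta M_\LLL = M_{\Lmax}^2$ by nestedness, and only then invoke \eqref{Mestimate}; since $M_{\Lmax}\le \mathrm{e}^{N-1}2^{\Lmax+1}\left(1+\Lmax/(N-1)\right)^{N-1}$, your route yields the same intermediate quantity with the constant $8$ in place of $16$, hence $C_8=32\,\mathrm{e}^{2(N-1)}$ after the ceiling correction, and the stated bound with $64\,\mathrm{e}^{2(N-1)}$ follows a fortiori. What your shortcut buys is brevity and a marginally sharper constant; what the paper's term-by-term route buys is the explicit per-level geometric structure, which it reuses in the proof of Theorem \ref{thm:NCGaccel}, where the analogous sum $S$ carries a level-dependent weight and genuinely cannot be collapsed the way you do here. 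Your two side remarks are also accurate: the factor $(1/(rN))^{N}$ must be retained when substituting $2^{\Lmax(\varepsilon)}$ (the last line of display \eqref{Mestimate} drops it, though it is restored in $C_1$ and appears explicitly in the present theorem), and the residual bookkeeping---the ceiling in $\Lmax(\varepsilon)$ and the $N/(N-1)$ slack in bounding $\left(1+\Lmax/(N-1)\right)^{N-1}$ by the brace containing $C_2$---is handled at exactly the same (generous) level of precision as in the paper's own proofs of Theorems \ref{thm:NCGzero} and \ref{thm:NCGaccel}.
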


\begin{proof} 
The total interpolation cost is bounded by 
\begin{align}
\mathcal{C}_\mathrm{int}(\varepsilon) & \le 2M_h \sum_{\LLL=2}^{\Lmax(\varepsilon)} \Delta M_\LLL M_{\LLL-1}\notag\\
&\le 2M_h  \sum_{\LLL=2}^{\Lmax(\varepsilon)} 2^\LLL \binom{N-1+\LLL}{N-1} \sum_{l=1}^\LLL 2^l \binom{N-1+l}{N-1}\notag\\
&\le 2M_h \sum_{\LLL=2}^{\Lmax(\varepsilon)} 2^\LLL \binom{N-1+\LLL}{N-1} \sum_{l=1}^\LLL 2^l \binom{N-1+l}{N-1}\notag\\
&\le 2M_h \sum_{\LLL=2}^{\Lmax(\varepsilon)} 2^\LLL \left\{\binom{N-1+\LLL}{N-1}\right\}^2 2^{\LLL+1}\notag\\
&\le 4M_h \left\{\binom{N-1+\Lmax(\varepsilon)}{N-1}\right\}^2 4^{\Lmax(\varepsilon)+1} \notag\\
& \le 16M_h \mathrm{e}^{2(N-1)} 4^{\Lmax(\varepsilon)} \left( 1 + \frac{\Lmax(\varepsilon)}{N-1}\right)^{2(N-1)}. \label{int_cost_bound}
\end{align}
Substituting the definition of $\Lmax(\varepsilon)$ from Lemma \ref{lem:hLestimates} into \eqref{int_cost_bound} concludes the proof.
\end{proof}

Based on Theorems \ref{thm:NCGzero}, \ref{thm:NCGaccel} and \ref{thm:interpcosts}, we finally discuss the savings of the accelerated SC method proposed in \S \ref{sec:algo}. By comparing the estimates of $K_{\text{zero}}(\varepsilon)$ 
 and $K_{\text{acc}}(\varepsilon)$, 
 we see that the acceleration technique reduces $\log(C_3 / \varepsilon)$ in \eqref{Kzero} to $2\left(2^{1/N}-1\right)\log\left({3C_\text{sc}}/{\varepsilon}\right)$ in \eqref{Kacc}. Here both terms are of the same asymptotic order with respect to $\varepsilon$, but the savings from acceleration increases with dimension $N$ since $(2^{1/N}-1) \rightarrow 0$ as $N \rightarrow \infty$. On the other hand, when taking into account the cost of interpolation  $\mathcal{C}_{\text{int}}$, we must consider the cost $\mathcal{C}_{\text{iter}}$ of performing each iteration. In the case of using CG solvers, $\mathcal{C}_{\text{iter}}$ is the cost of one matrix-vector multiplication, and will be determined by the size of the unknown vector, $M_h$, and the sparsity of the mass matrix $\bm A(\bm y)$. Thus $\mathcal{C}_{\text{iter}}$ is proportional to the size of the finite element vector, i.e., $\mathcal{C}_{\text{iter}} = C_D M_h$, where $C_D$  depends on the dimension $d$ of the physical domain and choice of finite element basis. For example, without the use of a preconditioner, we can assume that the condition numbers of the matrices $\bm A(\bm y)$, for $\bm y \in \Gamma $, satisfy
\begin{align*}
\overline{\kappa} := \sup_{\bm y \in\Gamma} \kappa(\bm y) \le \left(\frac{C_{\kappa}}{h}\right)^2,
\end{align*}
where the constant $C_{\kappa}>0$ is independent of $\bm y\in\Gamma$
\cite{bank1989conditioning}. 
Then we can examine the contribution of the condition number in Theorems \ref{thm:NCGzero} and \ref{thm:NCGaccel}:
using the inequality $\log(x) \ge (x-1)/x$ and Lemmas \ref{assum:FEMconv} and \ref{lem:hLestimates}, we bound the terms involving the condition number as
\begin{align*}
\frac{1}{\log\left(\frac{\sqrt{\overline{\kappa}}+1}{\sqrt{\overline{\kappa}}-1}\right)} & \le \frac{\sqrt{\overline{\kappa}}+1}{2} \le C_{\kappa}\left(\frac{3C_{\text{fem}}}{\varepsilon}\right)^{1/s}.
\end{align*}
Now as $\varepsilon\rightarrow 0$, the asymptotic iterative solver costs, 
$\mathcal{C}_\mathrm{zero} = C_D M_h K_\mathrm{zero}$ are of the order 
$M_h\left(\frac{1}{\varepsilon}\right)^{1/s}\left\{\log\left( \frac{1}{\varepsilon}\right)\right\}^{N+1} \left\{\log\log\left(\frac{1}{\varepsilon}\right)\right\}^{N-1}$, while in the accelerated case, the estimate for $C_D M_h K_\mathrm{acc}$, is of the same order with respect to $\varepsilon$, but with an improvement to the constant 
 of $\left(2^{1/N}-1\right)$. For the accelerated method, the additional interpolation costs $\mathcal{C}_\mathrm{int}$ are of order 
$M_h \left\{ \log\left(\frac{1}{\varepsilon}\right)\right\}^{2N}
 \left\{\log\log\left( \frac{1}{\varepsilon} \right) \right\}^{2(N-1)} $,
which is 
 negligible compared to the iterative solver complexity. 
 It is clear that, asymptotically, the accelerated method leads to a net reduction in computational cost. 
We remark that for many adaptive interpolation methods, the addition of new points already involves evaluation of the current (coarse) interpolant. In this case, the cost of interpolation can be ignored, and the accelerated method should be used.

\section{Numerical examples}\label{sec:numerical}

The goal of this section is to demonstrate the reduction in computational cost of SC methods using the proposed acceleration technique. 
In Example 5.1, we first use the accelerated SC method to solve an stochastic elliptic PDE with one spatial dimension, and compute the overall cost and iteration savings gained by acceleration. Example 5.2 considers a similar problem and looks at the number of CG iterations versus the collocation error, comparing the implementation of the method using isotropic and anisotropic sparse grids, and demonstrating the effect of varying stochastic dimension $N$ on the convergence of the individual systems. In addition, as described in Remark~\ref{rem:preconditioner}, we extend our acceleration technique to interpolated preconditioners, which also exhibit the convergence improvements of the method. Finally, Example 5.3 applies the accelerated method to iterative solvers for nonlinear parametrized PDEs.  

The analysis in section \ref{sec:estimate} consisted of two components: (i) estimates for the reduction in solver iterations from using acceleration, and (ii) interpolation costs. The interpolation costs can be computed exactly for non-adaptive methods, and for adaptive implementations of sparse grid SC the interpolation costs can be ignored. In Example 5.1, all error contributions are balanced, and the total cost is examined, including both solver iterations and interpolation construction. In Examples 5.2 and 5.3 we focus only on the number of iterations of the CG solver.

\subsection*{Example 5.1}\label{ssec:1d}
We consider the following elliptic stochastic PDE
\begin{equation}\label{prob1}
\left\{
\begin{array}{rll}
-\nabla \cdot \left( a\left(x,\bm y\right) \nabla u\left(x,\bm y\right) \right) &= ~10 & \textrm{ in } D \times \Gamma , \\
u(x,\bm y) &= ~0 & \textrm{ on } \partial D \times \Gamma,
\end{array}
\right.
\end{equation}
where $D=[0,1]$, $\bm y = (y_1, y_2, y_3, y_4)^{\top}$, $\Gamma_n = [-1,1], n=1,\ldots,4$, and the coefficient $a$ is given by:
\begin{align}\label{eq:coeff1d}
    \log\left(a\left(x,\bm y\right)-1\right)  &= \mathrm{e}^{-1/8}\left(y_1 \cos \pi x  + y_2 \sin \pi x  + y_3 \cos 2\pi x  +  y_4 \sin 2\pi x \right).
\end{align}
The random variables $\{y_i\}_{i = 1}^4$ are independent and identically distributed uniform random variables in $[-1,1]$. 
In the one-dimensional physical domain, a finite element discretization using linear elements yields tridiagonal, symmetric positive-definite systems. While this type of system could be solved efficiently by direct methods, nevertheless we use CG solvers to demonstrate the convergence properties of the acceleration method.

Table \ref{tbl:NOP} compares the standard and the accelerated SC methods, where the error for each
approximate solution,~$\widetilde{u}_{h,\Lmax}$, is computed against a highly refined approximate reference solution $\widetilde{u}_{h^*,\LLL^*}$
with $h^*=2^{-14},\LLL^*=10$. 
In Figure \ref{fig:1Dsav} we plot the savings of the accelerated SC method, computed according to the cost metrics \eqref{eq:cost0} and \eqref{eq:costa}. Since the constants $C_{\text{fem}}$ and $C_{\text{sc}}$ in Lemma \ref{lem:hLestimates} are not known {\em a priori}, to balance the error contributions in \eqref{e1e2e3} we 
use trial and error to determine sufficient values $h$, $\Lmax$, and $\tau$ to achieve the desired overall error $\varepsilon$ in the $L^2_\varrho$ norm. Especially for the larger systems, i.e., those with a large number of spatial degrees of freedom, significant savings are achieved.
The percent savings in the number of iterations versus the cost of interpolation are calculated according to
\begin{equation*}
	\frac{\mathcal{C}_\mathrm{zero} - \mathcal{C}_\mathrm{acc}}{\mathcal{C}_\mathrm{zero}} = \frac{M_h C_D (K_\mathrm{zero} - K_\mathrm{acc} ) - \mathcal{C}_\mathrm{int} }{M_h C_D K_\mathrm{zero} },
\end{equation*}
where $C_D=5$, since the matrices are tridiagonal. 
\begin{table}[h!]
\renewcommand{\arraystretch}{1.2}
\begin{center}
\begin{tabular}{|c | c | c | c | c | c | c |}
\hline
Tot. Err	&FE DoFs & SC Pts	& CG tol & $K_\mathrm{zero}$ & 	$K_\mathrm{acc}$ 	& Savings \\ \hline 
 $1\times 10^{-2}$&	255	&	137		& $1\times10^{-3}$	&	28,259   		&	21,123		&	19.4 \%	\\	
 $5\times 10^{-3}$&	511	&	401		& $5\times10^{-3}$	&	173,671		&	83,884		&	42.4\%	\\
 $1\times 10^{-3}$&	2,047	&	1,105		& $1\times10^{-4}$	&	2,001,905		&	626,215		&	62.3\% \\
 $5\times 10^{-4}$&	4,095	&	2,929		& $5\times10^{-5}$	&	10,878,352	&	1,842,703		&	74.5\% \\
 $1\times 10^{-4}$&	16,383&	7,537		& $1\times10^{-5}$	&	114,570,175	&	12,345,968	&	75.1\% \\
 \hline
\end{tabular}
\end{center}
\caption{Comparison in computational cost between the standard and the accelerated SC methods for solving \eqref{prob1}--\eqref{eq:coeff1d}.}
 \label{tbl:NOP}
\end{table}
\begin{figure}[h!]
\begin{center}
	\includegraphics[width=9cm]{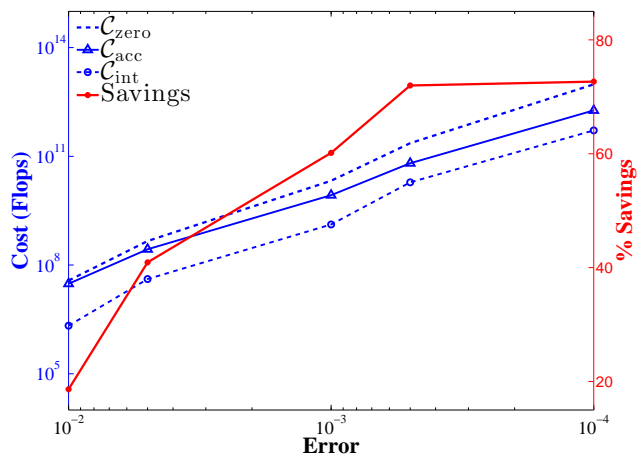}
\end{center}
\caption{Cost (left axis) and percent savings (right axis) of the accelerated SC method versus the standard SC method for solving \eqref{prob1}--\eqref{eq:coeff1d}. Costs are computed according to \eqref{eq:cost0} and \eqref{eq:costa}.}
\label{fig:1Dsav}
\end{figure}

\subsection*{Example 5.2}
We consider the following stochastic linear elliptic problem
\begin{equation}\label{numeqn}
\left\{
\begin{alignedat}{2}
-\nabla \cdot \left( a\left(x,\bm y\right) \nabla u\left(x,\bm y\right) \right) & = \cos(x_1)\sin(x_2) & \quad \textrm{~ in ~} D\times\Gamma , \\
u(x,\bm y) & = 0 & \quad \textrm{~ on ~} \partial D \times \Gamma,
\end{alignedat}
\right.
\end{equation}
where $D = [0,1]\times[0,1]$, $\Gamma_n=[-\sqrt{3},\sqrt{3}], n=1,\ldots,N$, and $x= (x_1,x_2)$ is the spatial variable. The random diffusion term has one-dimensional spatial dependence given by
\begin{subequations}\label{numsetup}
\begin{align}\label{aexpansion}
\log(a(x,\bm y) -0.5) &= 1 + y_1 \left(\sqrt{\pi}R/2\right)^{1/2} + \sum_{n=2}^N\zeta_n \varphi_n(x) y_n,
\intertext{where}
\label{zeta}
\zeta_n &:= (\sqrt{\pi~}R)^{1/2}\exp\left(\frac{-\left(\left\lfloor {n}/{2}\right\rfloor\pi R\right)^2}{8}\right), \qquad n>1
\intertext{and}
\label{varphi}
    \varphi_n(x) & :=
    \begin{cases}
        \vspace{0.2cm}\sin\left(\dfrac{\left\lfloor n/2\right\rfloor\pi x_1}{R_p}\right), \quad n \text{ even}\\
        \cos\left(\dfrac{\left\lfloor n/2\right\rfloor\pi x_1}{R_p}\right),  \quad n \text{ odd.}
    \end{cases}
\end{align}
\end{subequations}
The random variables $\{y_n\}_{n=1}^N$ are~i.i.d.~and are each uniformly distributed in $[-\sqrt{3},\sqrt{3}]$, with zero mean and unit variance, i.e., $\E[y_n]=0$, and $\E[y_ny_m]=\delta_{nm}$, for $n,m\in \N_+$. The finite dimensional stochastic diffusion $a$ represents the $N$-term truncation of an expansion of a random field with stationary covariance function, given by
\begin{equation}\label{eq:cov}
  \text{Cov}\left[\log\left(a - 0.5\right)\right](x_1,x_1') = \exp\left( -\frac{(x_1-x_1')^2}{R_c^2} \right),
\end{equation}
where $x_1,x_1'\in [0,1]$, and $R_c$ is the physical correlation length for the random field $a$. The parameter $R_p$ in (\ref{varphi}) is given by $R_p = \max\{1,2R_c\}$ and $R$ is given by $R= R_c/R_p$. Then $\zeta_n$ and $\varphi_n(x)$ are the eigenvalues and eigenfunctions associated with \eqref{eq:cov}.
Here we will consider two correlation lengths, namely $R_c=1/2$, and $R_c=1/64$, where Figure \ref{fig:eigdecay} shows the corresponding decay of eigenvalues.
For the spatial discretization, we use a finite element approximation on a regular triangular mesh with linear finite elements and 4225 degrees of freedom. 
The CG method is used for the linear solver with diagonal preconditioners and a tolerance of $10^{-14}$. 
\begin{figure}
  \begin{center}
      \includegraphics[width=7cm]{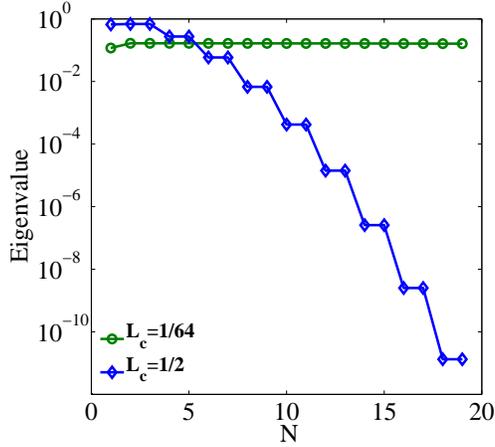}
  \end{center}
  \caption{First 19 eigenvalues for \eqref{eq:cov} for correlation length $R_c=1/64, 1/2$.}
\label{fig:eigdecay}
\end{figure}

First, for $R_c=1/64$, the error and total iteration count of both the standard case, using zero initial vectors, and accelerated SC construction, computed using several dimensions $N$, are summarized in Table \ref{table:L64}.
The error is measured using the expectation of the approximate solutions, $\|\E[u_{h,\Lmax}] - \E[u_{h,\LLL^*}]\|_{L^2(D)}$, for $\Lmax=1,\ldots,7$, where the ``exact" solution $\E[u_{h,\LLL^*}]$ is computed using $\LLL^*=8$. We compare these errors against the cumulative total number of iterations, $K_\mathrm{zero}$ and $K_\mathrm{acc}$, needed to construct $\E[u_{h,\Lmax}]$. 

\begin{table}
\renewcommand{\arraystretch}{1.1}
\begin{center}
\begin{tabular}{|c|c|c|c|c|c|}
\cline{1-6}
{} & {Error} & {SC Pts} & {$K_\mathrm{zero}$ } & {$K_\mathrm{acc}$ } & {Savings in K}\\
\hline
\multicolumn{1}{ |c| }{\multirow{3}{*}{N=3}} & \multicolumn{1}{ |c| }{3.83e-8} & {25} & {6,780} &  {5,991} & {11.6\%}\\
\cline{2-6}
\multicolumn{1}{ |c| }{} & \multicolumn{1}{ |c| }{9.57e-10} & {69} &  {18,893} & {14,628} & {22.6\%}\\
\cline{2-6}
\multicolumn{1}{ |c| }{} & \multicolumn{1}{ |c| }{9.86e-12} & {177} & {48,691} &  {27,765} & {43.0\%}\\
\hline
%
\multicolumn{1}{ |c| }{\multirow{3}{*}{N=5}} & \multicolumn{1}{ |c| }{5.28e-07} & {61} & {17058} &  {15095} & {11.6\%}\\
\cline{2-6}
\multicolumn{1}{ |c| }{} & \multicolumn{1}{ |c| }{1.03e-08} & {241} &  {67,955} & {53,992} & {20.6\%}\\
\cline{2-6}
\multicolumn{1}{ |c| }{} & \multicolumn{1}{ |c| }{1.44e-10} & {801} & {226,597} &  {150,241} & {33.7\%}\\
\hline
\multicolumn{1}{ |c| }{\multirow{3}{*}{N=7}} & \multicolumn{1}{ |c| }{2.43e-08} & {589} &  {168,237} & {136,072} & {19.1\%}\\
\cline{2-6}
\multicolumn{1}{ |c| }{} & \multicolumn{1}{ |c| }{6.63e-10} & {2,465} & {706,049} &  {500,718} & {29.1\%}\\
\cline{2-6}
\multicolumn{1}{ |c| }{} & \multicolumn{1}{ |c| }{1.94e-11} & {9,017} & {2,585,970} &  {1,496,391} & {42.1\%}\\
\hline
%
\multicolumn{1}{ |c| }{\multirow{3}{*}{N=9}} & \multicolumn{1}{ |c| }{1.68e-07} & {1,177} & {338,428} & {277,583} & {18.0\%}\\
\cline{2-6}
\multicolumn{1}{ |c| }{} & \multicolumn{1}{ |c| }{7.83e-09} & {6,001} & {1,729,337} &  {1,273,895} & {26.3\%}\\
\cline{2-6}
\multicolumn{1}{ |c| }{} & \multicolumn{1}{ |c| }{8.86e-11} & {26,017} & {7,505,343} &  {4,719,820} & {37.1\%}\\
\hline
\multicolumn{1}{ |c| }{\multirow{3}{*}{N=11}} & \multicolumn{1}{ |c| }{2.59e-07} & {2,069} & {596,368} & {495,705} & {16.9\%}\\
\cline{2-6}
\multicolumn{1}{ |c| }{} & \multicolumn{1}{ |c| }{2.43e-08} & {12,497} & {3,608,185} & {2,736,615} & {24.2\%}\\
\cline{2-6}
\multicolumn{1}{ |c| }{} & \multicolumn{1}{ |c| }{1.95e-09} & {63,097} & {18,231,420} & {12,139,658} & {33.4\%}\\
\hline

\end{tabular}
\end{center}
\caption{Iteration counts and savings of the accelerated SC method for solving \eqref{numeqn}--\eqref{numsetup} with correlation length $R_c=1/64$, and stochastic dimensions $N=5,7,9$, and $11$.}
\label{table:L64}
\end{table}

An alternative approach to accelerating SC methods is found in \cite{gordon2012solving}. For a particular SC level $\Lmax$, this method orders the collocation points lexicographically, with each dimension ordered according to
the decay of the eigenvalues in (\ref{aexpansion}). 
 We also implemented a similar method without the sequential ordering; for a given level $\LLL$, at each new collocation point in $\Delta\mathcal{H}_\LLL$ the solution at the nearest collocation point from lower levels is given as an initial guess to accelerate the CG solver. We refer to this method as the ``nearest neighbor" approach. 
 Figure \ref{fig:avgiters} shows the 
 average number of iterations needed to solve the linear system \eqref{jlinsys}, where the average is taken over the new points at level $\LLL$, i.e., $\Delta\mathcal{H}_\LLL$, for $\LLL=1,\ldots,7$. We compare our interpolated acceleration algorithm, the nearest neighbor approach, and standard SC method without acceleration, for $N=3$ and $N=11$, using $R_c=1/64$. The interpolated initial vector provided by the acceleration algorithm yields a reduction in the average number of iterations at each level, which increases with $\LLL$. 
  Figure \ref{fig:avgiters} also shows the effect of using the nearest neighbor solution as the initial vector, 
 which provides some improvement over the standard case using zero initial vectors, but the savings do not match those of our acceleration scheme. Note that since the number of new collocation points grows exponentially with each level (cf \eqref{growthSmolyak}), there is an increase in {\em total} iteration savings over successive levels in both the nearest neighbor and accelerated case.
 
The left plot of Figure \ref{fig:savingsComparison} shows the total iteration savings achieved by the acceleration algorithm with different maximum collocation levels $\Lmax=1,\ldots,6$. The savings are measured as the percentage reduction in the cumulative iteration count up to level $\Lmax$, relative to standard case using zero initial vectors, i.e.,  $(K_\mathrm{zero}-K_\mathrm{zero})/K_\mathrm{zero}$. Here we also see the effect of stochastic dimension on the convergence of SC methods: as $N$ increases, our algorithm provides less accurate initial guesses for a given maximum SC level $\Lmax$. This can also be seen by comparing the left and right plots of Figure \ref{fig:avgiters}, which show how the average number of iterations at a given SC level $\LLL$ changes from $N=3$ to $N=11$.  On the other hand, the right plot of Figure \ref{fig:savingsComparison} shows the same total iteration savings now plotted versus error. As above, the error is measured as $\|\E[u_{h,\Lmax}] - \E[u_{h,\LLL^*}]\|_{L^2(D)}$, with $\LLL^*=7$. These results are in agreement with the theoretical asymptotic estimates from Theorem \ref{thm:NCGaccel}, which predict an increased savings vs error for larger dimensions.

\begin{figure}
  	\begin{center}
    		\includegraphics[width=7.5cm]{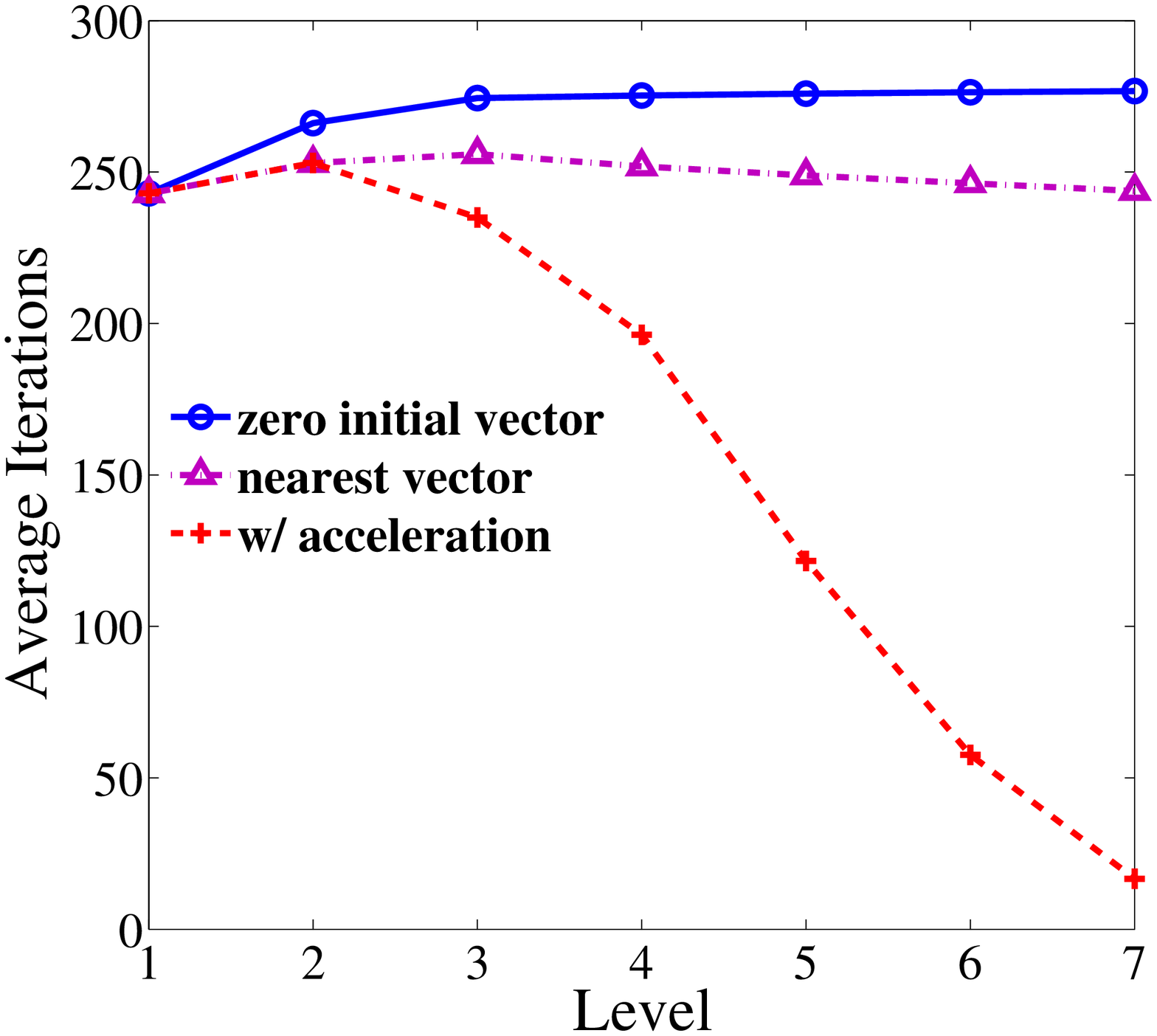}     
		\includegraphics[width=7.5cm]{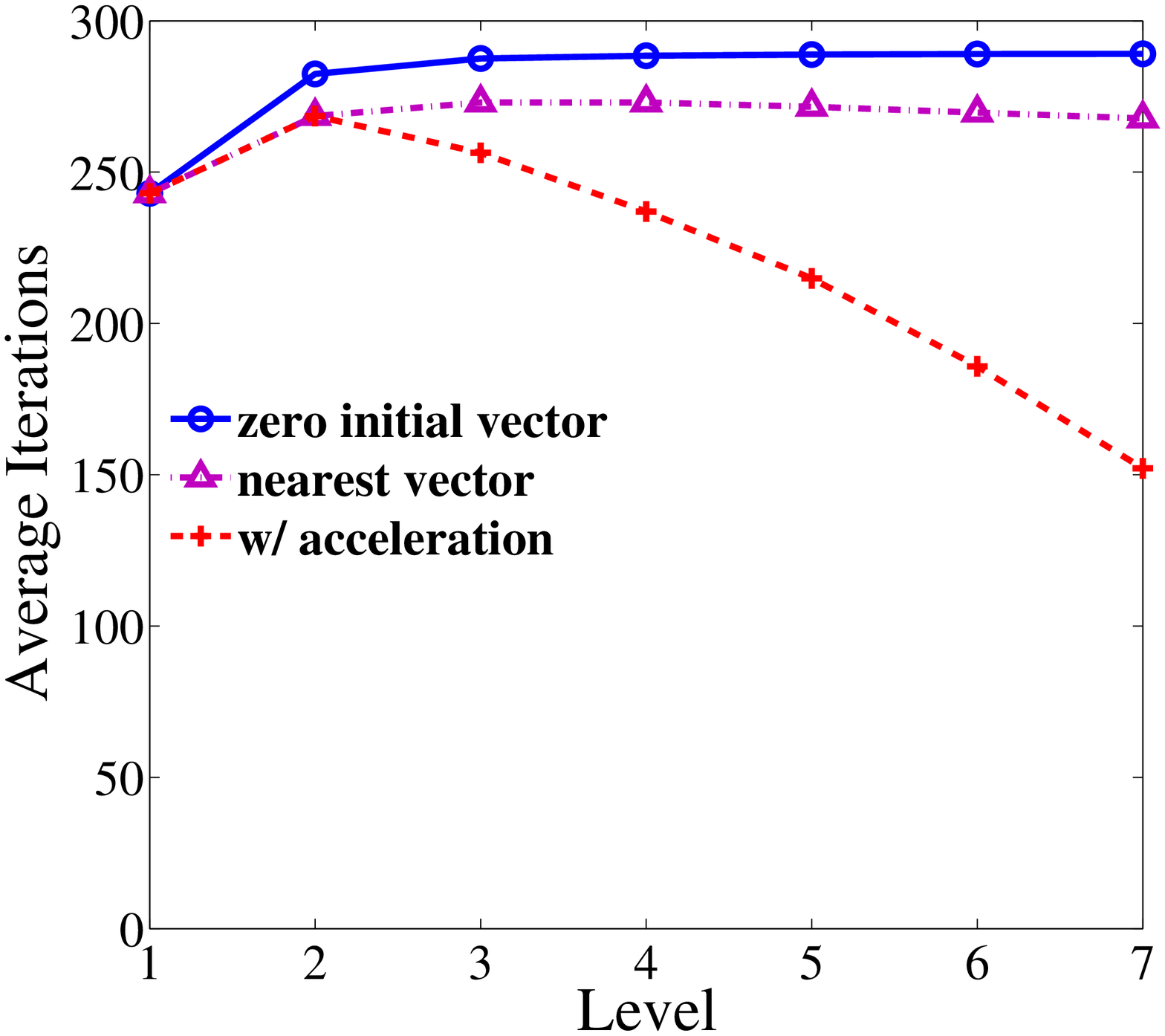}
  \end{center}
  \caption{Comparison of the average CG iterations per level for solving problem \eqref{numeqn}--\eqref{numsetup} with dimensions $N=3$ (left) and $N=11$ (right), and correlation length $R_c=1/64$.}
\label{fig:avgiters}
\end{figure}
\begin{figure}
  	\begin{center}
   		\includegraphics[width=7.5cm]{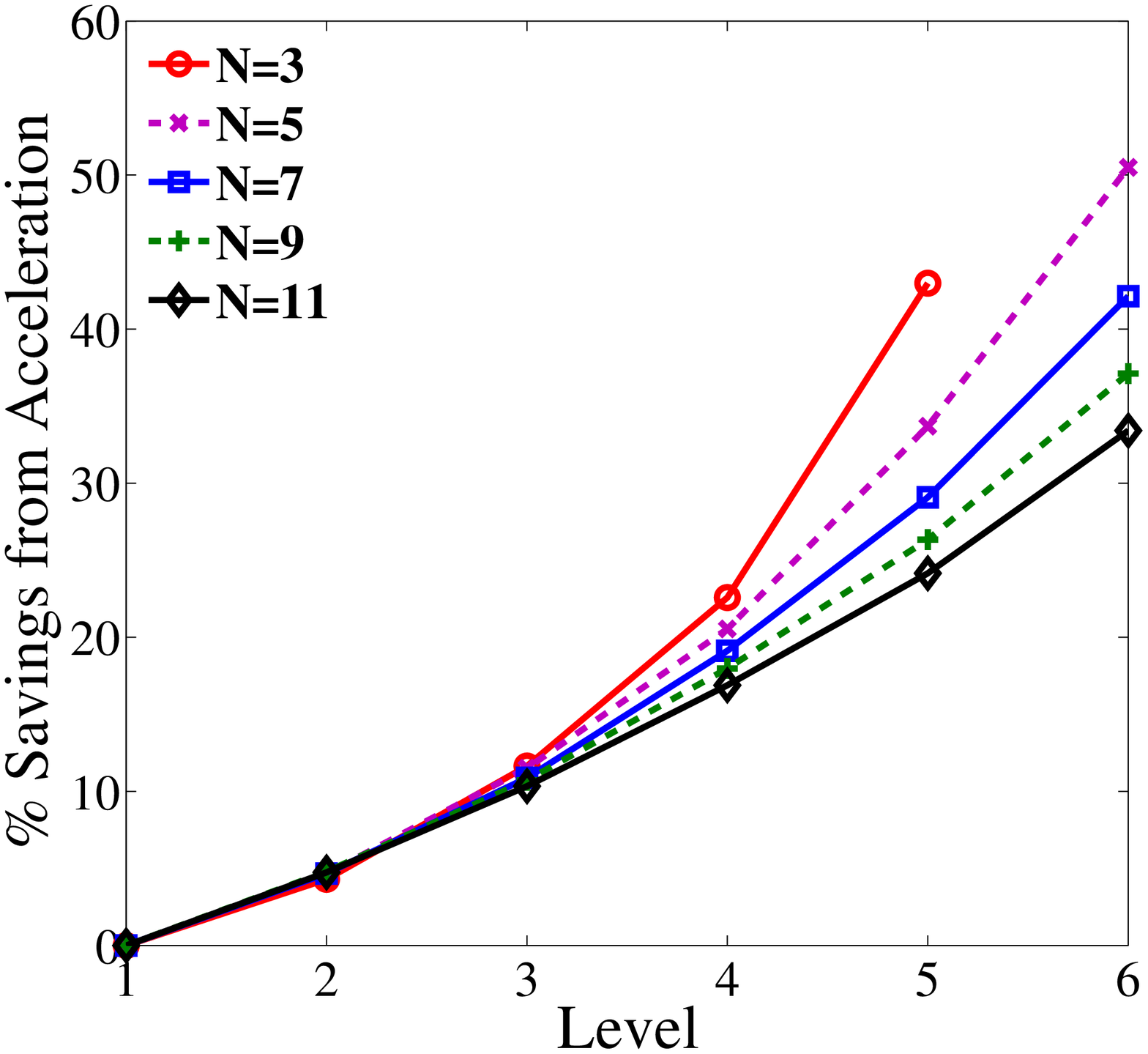}
    		\includegraphics[width=7.5cm]{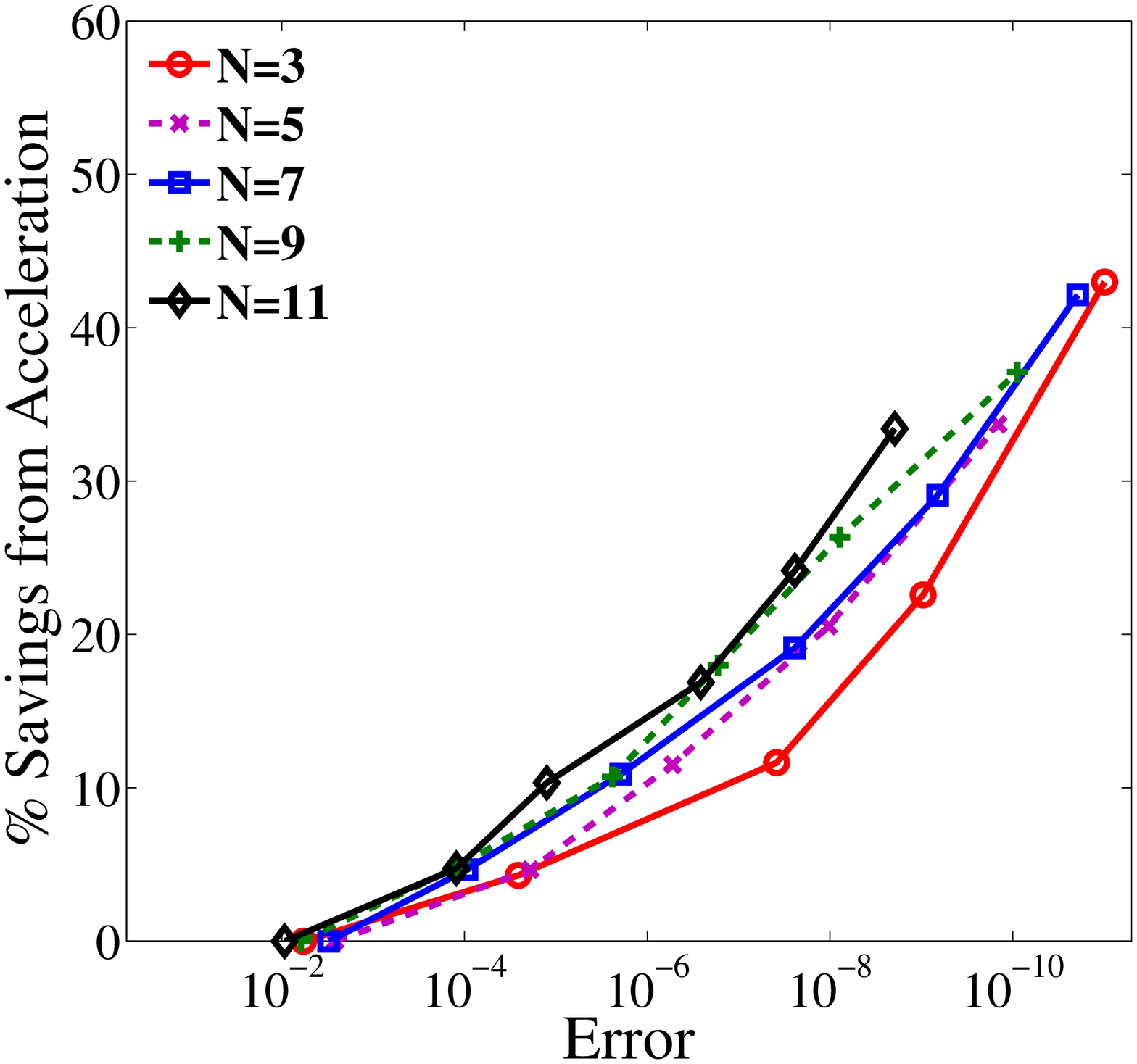} 
     	\end{center}
\caption{Percentage cumulative reduction in CG iterations vs level (left) and error (right) for solving \eqref{numeqn}--\eqref{numsetup} using our accelerated approach, with $N=5,7,9$, and $11$ and for correlation length $R_c=1/64$.}
\label{fig:savingsComparison}
\end{figure}
\begin{figure}
  	\begin{center}
       		\includegraphics[width=7.5cm]{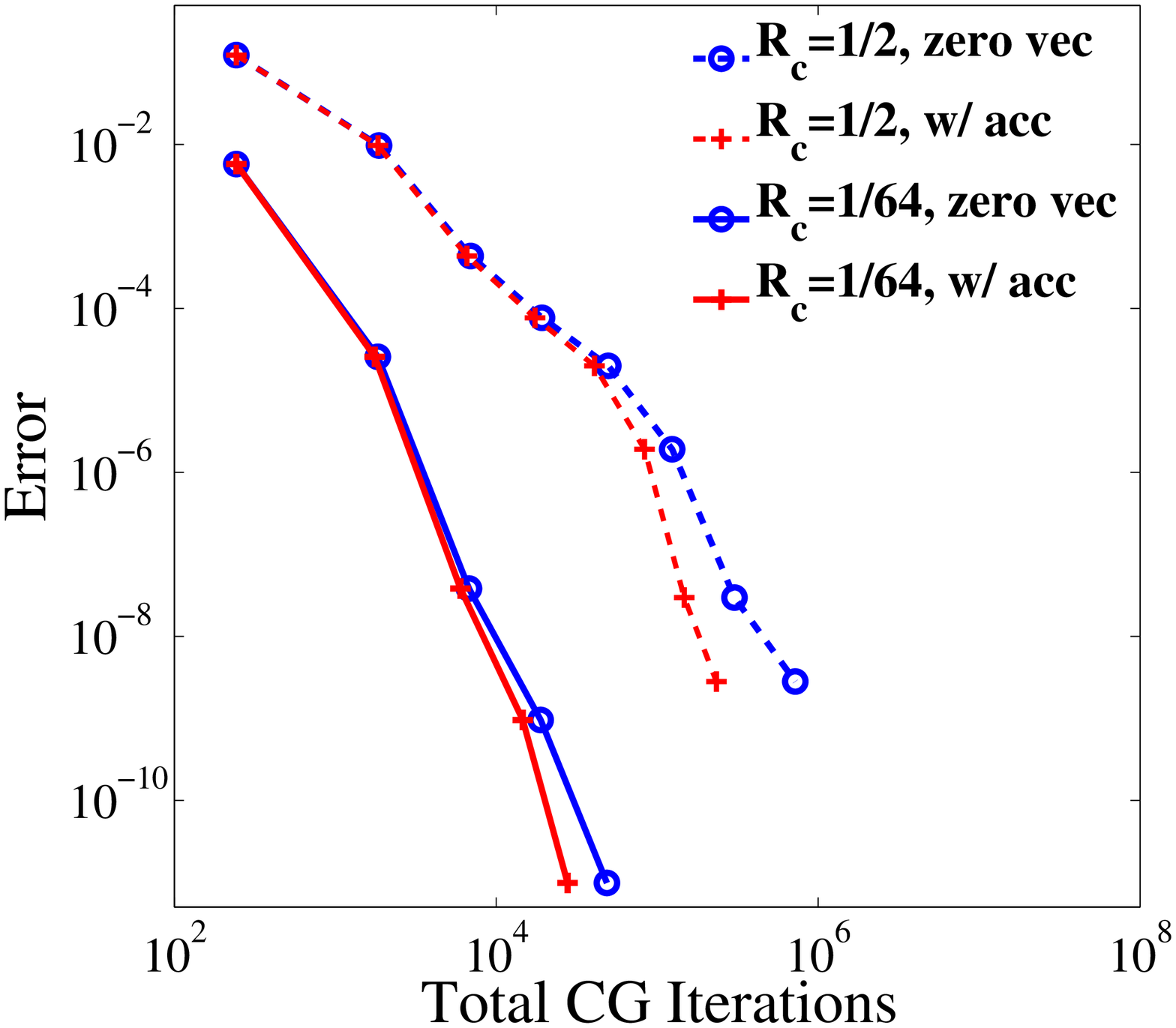}     
		\includegraphics[width=7.5cm]{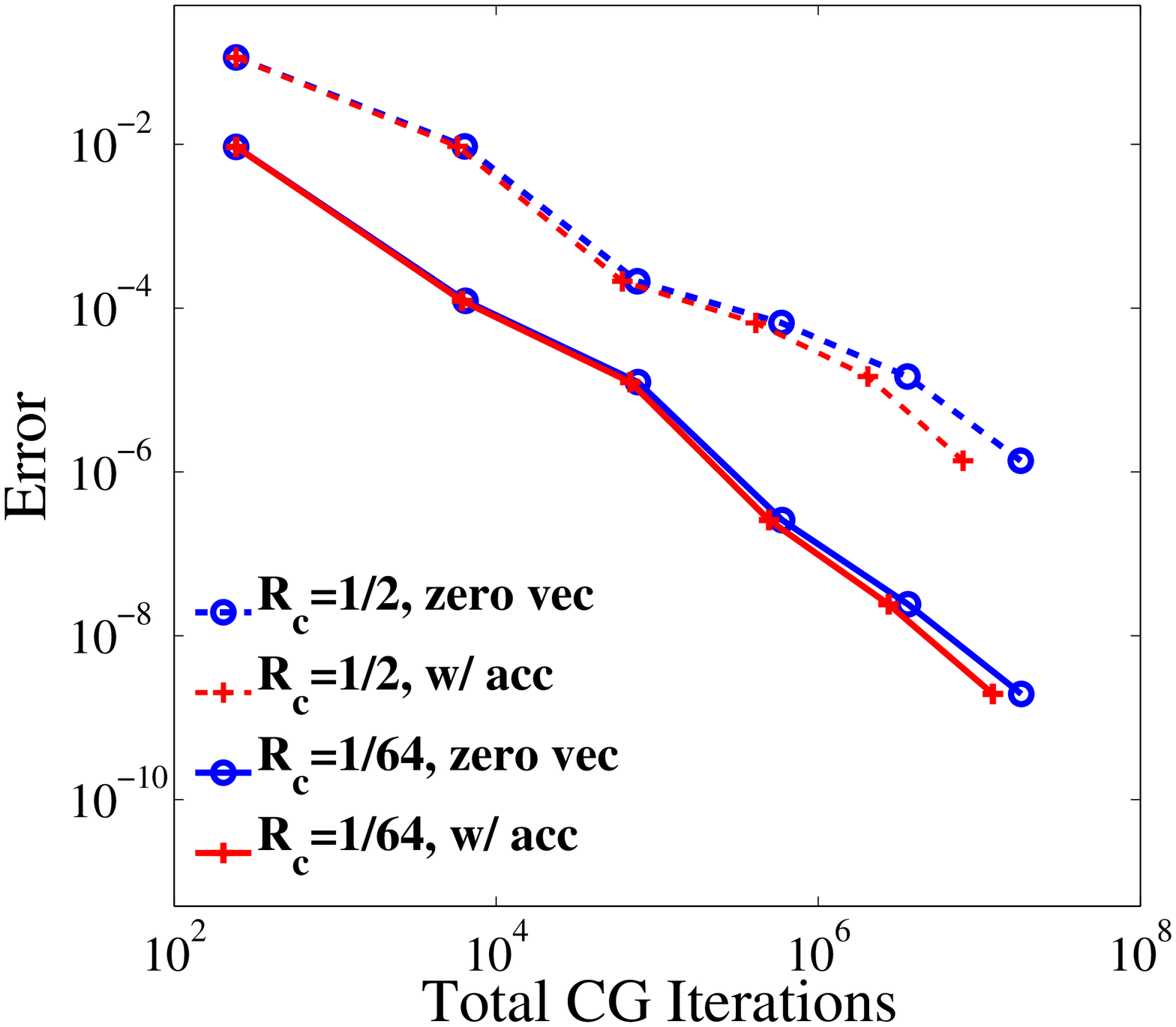}
  \end{center}
\caption{The convergence of the SC approximation for solving \eqref{numeqn}--\eqref{numsetup}, using CG, with and without acceleration, for correlation lengths $R_c=1/64, 1/2$, and dimensions $N=3$ (left), and $N=11$ (right). }
\label{fig:L2vsL64}
\end{figure}

Next we examine the effect of the correlation length, $R_c$, on our acceleration algorithm. Larger correlation lengths result in faster decay of eigenvalues of the covariance function \eqref{eq:cov} (see Figure \ref{fig:eigdecay}), and implies that $u(\bm y)$ depends on certain components of the vector $\bm y$ more than others, which reduces the effectiveness of isotropic methods.  
Figure \ref{fig:L2vsL64} plots the convergence of the error in $\E[u_{h,L}]$ versus the total number of CG iterations for $N=3$ and $N=11$, and for both $R_c=1/2$ and $R_c=1/64$.  The larger correlation length, $R_c=1/2$, results in slower convergence of the SC interpolant than for $R_c=1/64$, but note that the accelerated method reduces the total iteration count in both cases.

\begin{figure}
  \begin{center}
   \includegraphics[width=7.5cm]{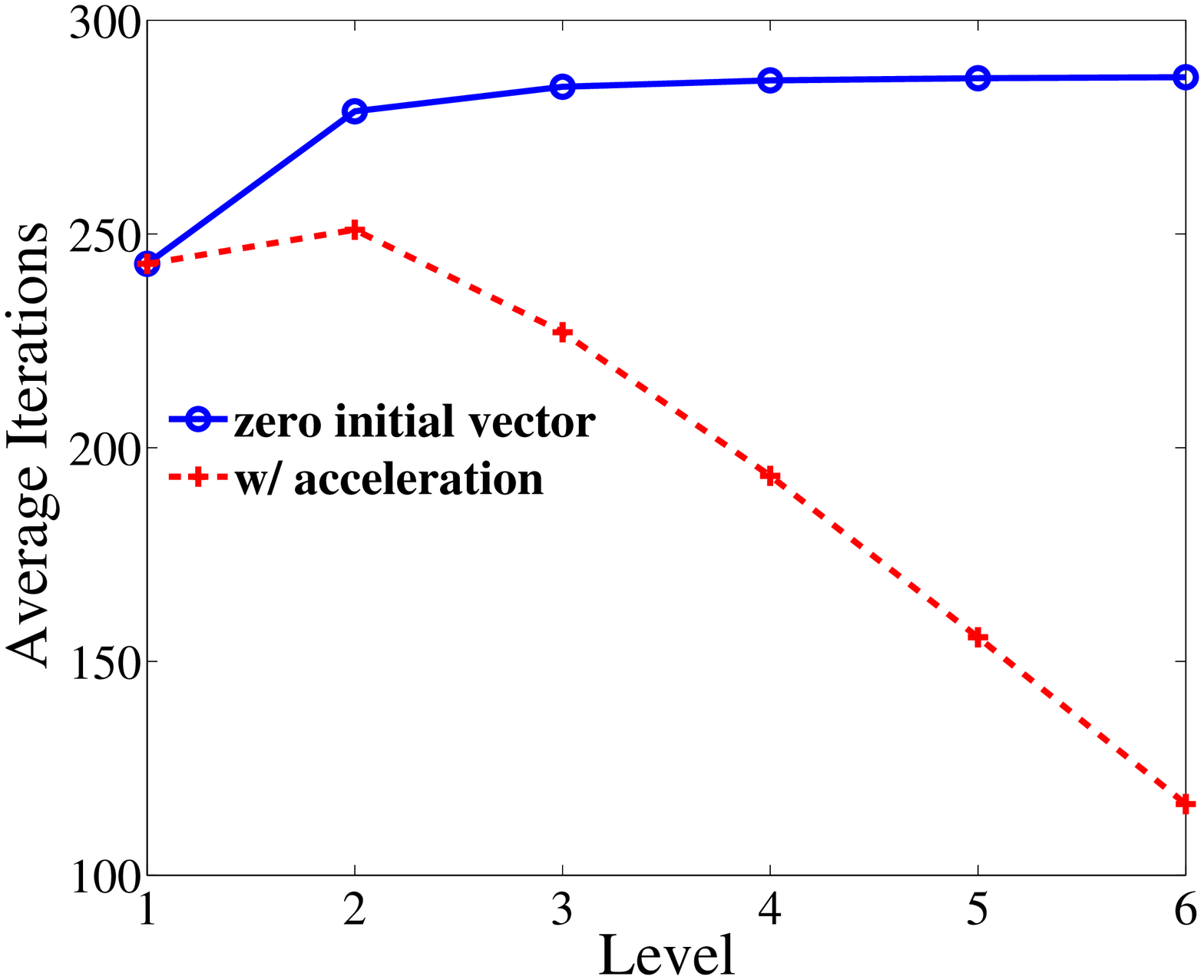}
   \includegraphics[width=7.5cm]{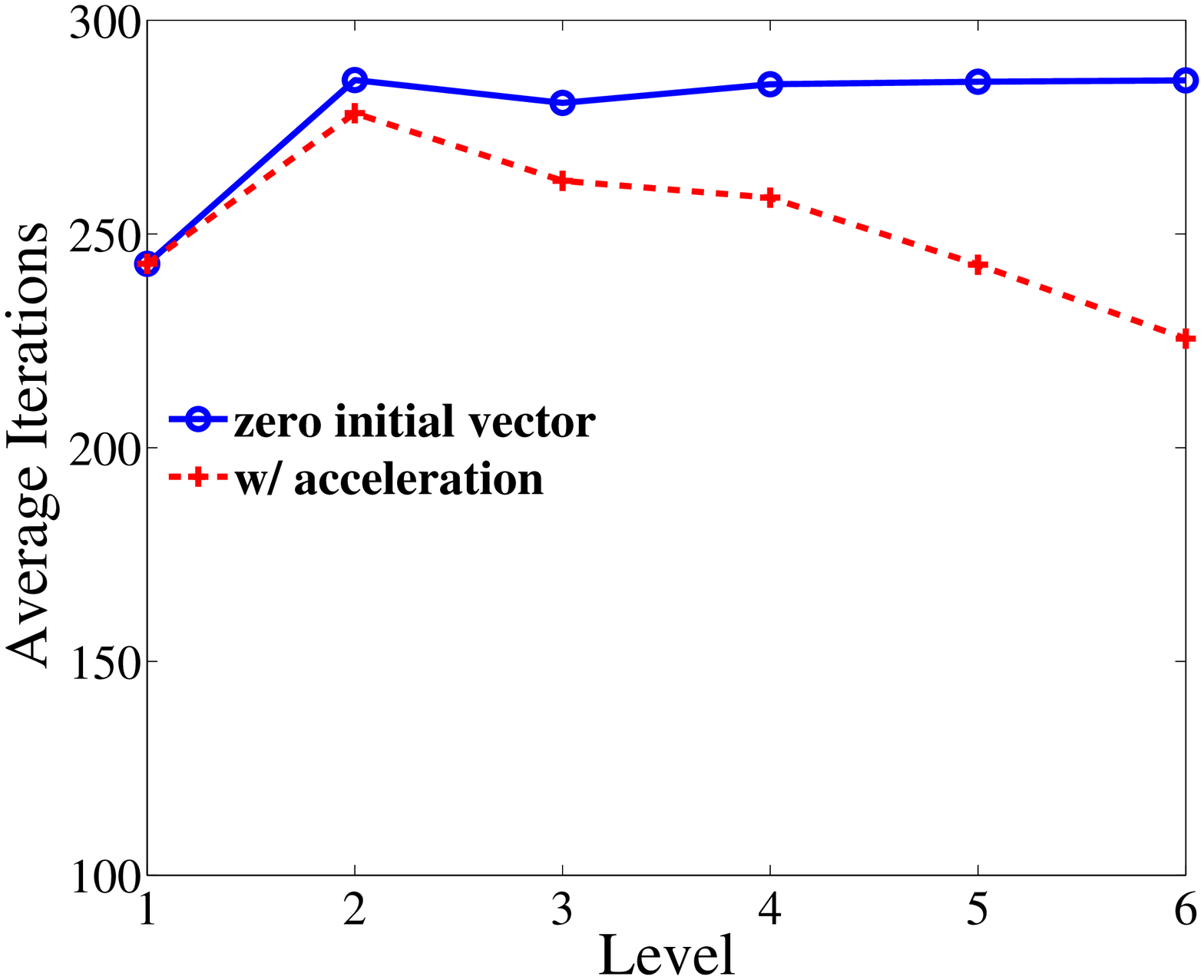}
  \end{center}
  \caption{Average CG iterations per level for solving problem \eqref{numeqn}--\eqref{numsetup} for $N=11$ and with correlation length $R_c=1/2$, using an isotropic SC (left) and anisotropic SC (right). The inefficiencies from using an isotropic grid are partially offset by increased gains from acceleration.}
\label{fig:aniso11Davg}
\end{figure}

On the other hand, we can employ {\em anisotropic} methods to increase the efficiency of SC in the case of larger correlation lengths \cite{nobile2008anisotropic}. Anisotropic SC methods will place more points in directions corresponding to large eigenvalues of \eqref{eq:cov}, and the importance of each dimension is encoded in a weight vector (see \eqref{growthSmolyak}).  Figure \ref{fig:aniso11Davg} plots the average number of iterations for problem \eqref{numeqn}--\eqref{numsetup} with a relatively large correlation length $R_c=1/2$, and $N=11$. Here we employ the weights given by an {\it a posteriori} selection described in \cite{nobile2008anisotropic}, i.e., the weight vector $\bm \alpha\in\mathbb{R}^N$, with $\alpha_1 = 0.85, \alpha_2 = \alpha_3 = 0.8, \alpha_4=\alpha_5= 1.0, \alpha_6=\alpha_7= 1.6, \alpha_8=\alpha_9 = 2.6, \alpha_{10}=\alpha_{11}= 3.7$.
The acceleration method decreases the average number of iterations needed to solve the linear system, but the effect is not as pronounced as in the case of an isotropic SC method. This occurs because the isotropic method places far too many points in relatively unimportant directions, thus the dependence of $u(\bm y)$ on a certain component $y_n$ of $\bm y$ may be well approximated at very low levels. Anisotropic methods exhibit better convergence with respect to $M_{\Lmax}$ (and lower interpolation costs) versus isotropic methods, yet we see here that the acceleration algorithm helps to somewhat offset the inefficiency of isotropic methods for anisotropic problems.

\begin{table}
\begin{center}
\begin{tabular}{|c|c|c|c|c|c|c|}
\hline
 \multicolumn{7}{|c|} {CG iterations for standard SC}\\
\hline
{Level} & {\;No PC\;} &  {\;Diag PC\;} & {\;Inc. Chol.\;} & {\;$\LLL_\mathrm{PC}=1$\;} & {\;$\LLL_\mathrm{PC}=2$\;} & {\;$\LLL_\mathrm{PC}=3$\;} \\
\hline
1 & 243 & 243 & 55 & 55 & -- & --\\
\hline
2 & 311.8 & 278.4 & 54.7 & 60.7 & 54.7 & -- \\
\hline
3 & 332.3 & 284.9 & 54.6 & 63.5 & 54.9 & 54.6 \\
\hline
4 & 341.0 & 286.1 & 54.6 & 65.2 & 55.3 & 54.6 \\
\hline
5 & 345.8 & 286.7 & 54.6 & 66.2  & 55.5 & 54.6 \\
\hline
6 & 348.4 & 286.9 & 54.6 & 66.7 & 55.6 & 54.6 \\
\hline
\hline
 \multicolumn{7}{|c|} {CG iterations for accelerated SC} \\
 \hline
{Level} & {No PC} &  {Diag PC} & {Inc. Chol.} & {$\LLL_\mathrm{PC}=1$} & {$\LLL_\mathrm{PC}=2$} & {$\LLL_\mathrm{PC}=3$} \\
\hline
1 & 243 &    243    &   55  &      55   &     --   &     --\\
\hline
2 & 299.3 &  264.6 & 52.9 & 58.4 &  52.9 &  --  \\
\hline
3 & 295.8 &  251.3  &  49.1&  57.1 &  49.4  & 49.1\\
\hline
4 & 270.8   &  225.8&  43.7  & 52.3 &  44.2 &   43.7 \\
\hline
5 & 237.0 &  194.3 &  37.3 & 45.8 &  38.0 &  37.3 \\
\hline
6 & 186.1  & 151.9  & 28.9 & 36.0 &  29.5 &  28.9  \\
\hline
\end{tabular}
\end{center}
\caption{Average iteration counts for the standard SC method (top), and the accelerated SC method (bottom)
using six preconditioner schemes to solve \eqref{numeqn}--\eqref{numsetup} with $N=7$, and $R_c=1/64$. From left to right: no preconditioner, diagonal preconditioners, incomplete Cholesky preconditioners, and accelerated preconditioners \eqref{PCalg} built using incomplete Cholesky preconditioners with $\LLL_\mathrm{PC}=1,2,3$.}
\label{table:7DPC}
\end{table}

In the preceding results we have used a simple diagonal preconditioner strategy. As described in Remark~\ref{rem:preconditioner}, we can also construct efficient preconditioners with our acceleration scheme.  Table \ref{table:7DPC} shows the effectiveness of the preconditioning strategy for solving equations \eqref{numeqn}--\eqref{numsetup}, with $N=7$ and $R_c=1/64$, where we compare the average number of iterations needed to solve \eqref{jlinsys} at each new point $\bm y_{\jL{\LLL}}\in\Delta\mathcal{H}_\LLL$ at a given level $\LLL$. Here we compute an incomplete Cholesky preconditioner for each linear system on the levels $\LLL =1,\ldots, \LLL_\mathrm{PC}$, for $\LLL_\mathrm{PC} = 1,2$, and $3$, and use these to provide an ``accelerated" preconditioner \eqref{PCalg} for the systems on the remaining levels $\LLL_\mathrm{PC}+1,\ldots, \Lmax$. We compare this against the cases where a simple diagonal preconditioner and an incomplete Cholesky preconditioner are used for each system.
The three-level accelerated preconditioner reduces the average number of iterations to within a decimal point of the incomplete Cholesky preconditioner, and the cost of computing the low-level preconditioners and interpolating is relatively cheap in comparison.
\subsection*{Example 5.3}\label{ssec:nl1d}
The preceding experiments demonstrate the benefits of using acceleration to improve the convergence of individual iterative linear solvers. In the case of a nonlinear PDE, the possibilities for savings can be even greater than the linear cases above, since convergence of a nonlinear solver may be slow or even unattainable from a poor initial vector.
In this example, we consider the problem
\begin{equation}\label{prob:nonlinear}
\left\{
\begin{alignedat}{3} 
	-\nabla \cdot \left( a\left(x,\bm y\right) \nabla u\left(x,\bm y\right) \right) + F[u](x, \bm y) & = x \qquad \textrm{ in } D \times \Gamma , \notag \\
	u(0,\bm y) & = 0 \qquad \textrm{ in } \Gamma, \\
	u'(1,\bm y) & = 1 \qquad \textrm{ in } \Gamma,
\end{alignedat}
\right.
\end{equation}
where $a$ is given by \eqref{eq:coeff1d}, $D=[0,1]$, $\Gamma_n=[-1,1], n=1,\ldots,4$, and $F[u]$ is some nonlinear function of $u$. In what follows, we consider the nonlinear functions $F[u] = u^5$, and $F[u] = uu'$.

Nonlinear problems are typically solved with the use of iterative methods such as Picard iterations or Newton's method. We implement a combination of these methods that begins with Picard iterations, then utilizes Newton's method once the relative errors are small. For spatial discretization, we use piecewise linear finite elements on $[0,1]$ with a mesh size of $h = 1/500$, and solved the resulting systems at each iteration using exact methods. The stopping criterion for the solver is a relative tolerance of $10^{-8}$ in the $l^2$ norm.

Results for these experiments are given in Figure \ref{fig:nonlinear}.  
 For each SC level, $\LLL=1,\ldots,8,$ we plot the average number of nonlinear iterations, where the average is taken over the set of points which are new to level $\LLL$, namely $\Delta \mcH_\LLL$. 
Finally, we show the total computational time in Table \ref{tbl:nonlineartime}, for different maximum levels of stochastic approximation, measured on a workstation with 1.7GHz dual core processors and 8 GB of RAM. 
We note that in Table \ref{tbl:nonlineartime}, the size of the finite element system is fixed. Thus, as we move to higher levels of collocation, the stochastic approximation becomes relatively more expensive to compute compared to the solving the finite element systems. This is why the savings begin to decrease after level 5, even though Figure \ref{fig:nonlinear} shows dramatic savings in iterations for higher levels. Furthermore, the reason for the negative savings for a level $L=2$ stochastic approximation is that the interpolant is not yet accurate enough to overcome the additional cost of the acceleration. 

\begin{figure}
\begin{center}
	\includegraphics[width=7.5cm]{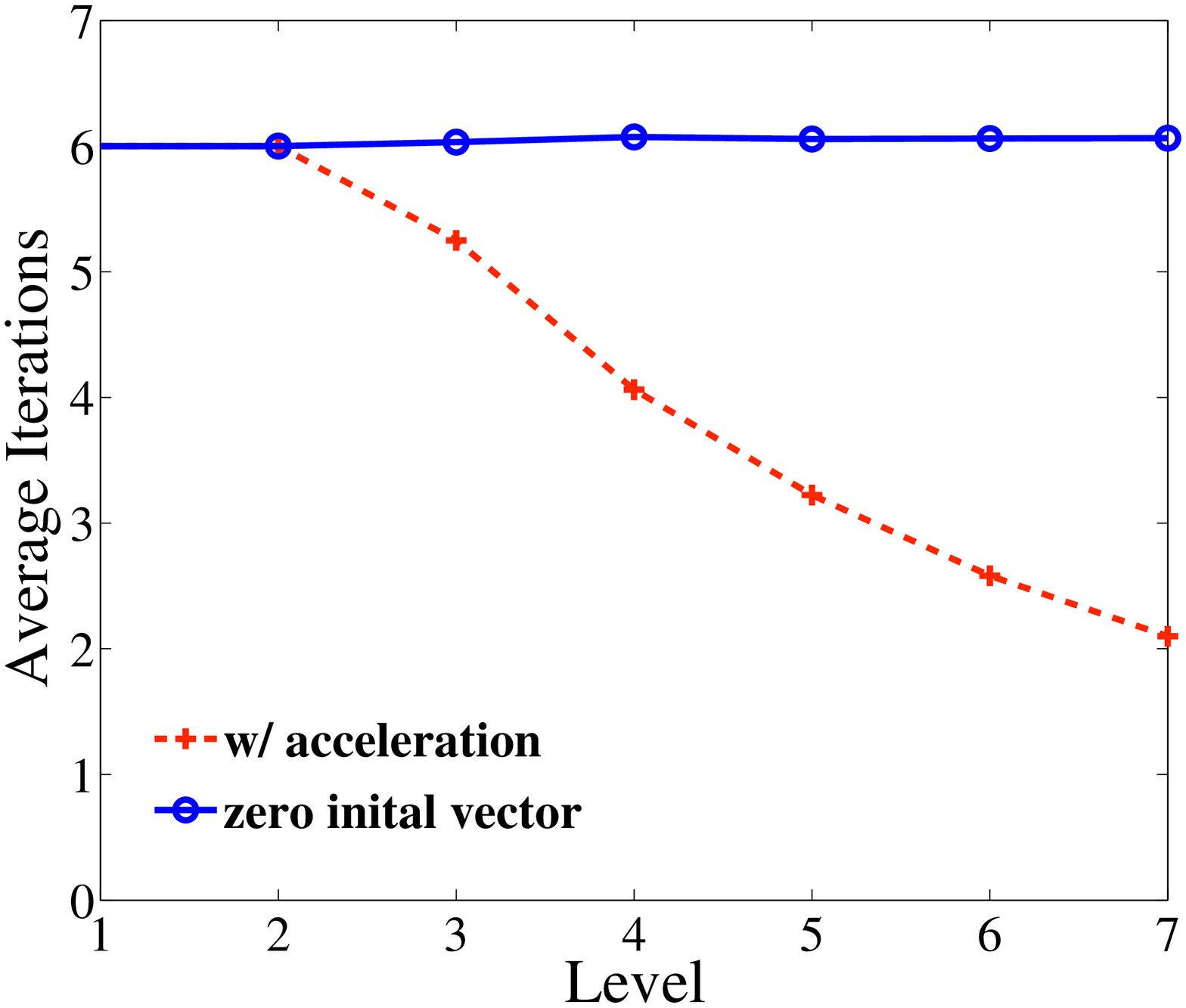}
	\includegraphics[width=7.5cm]{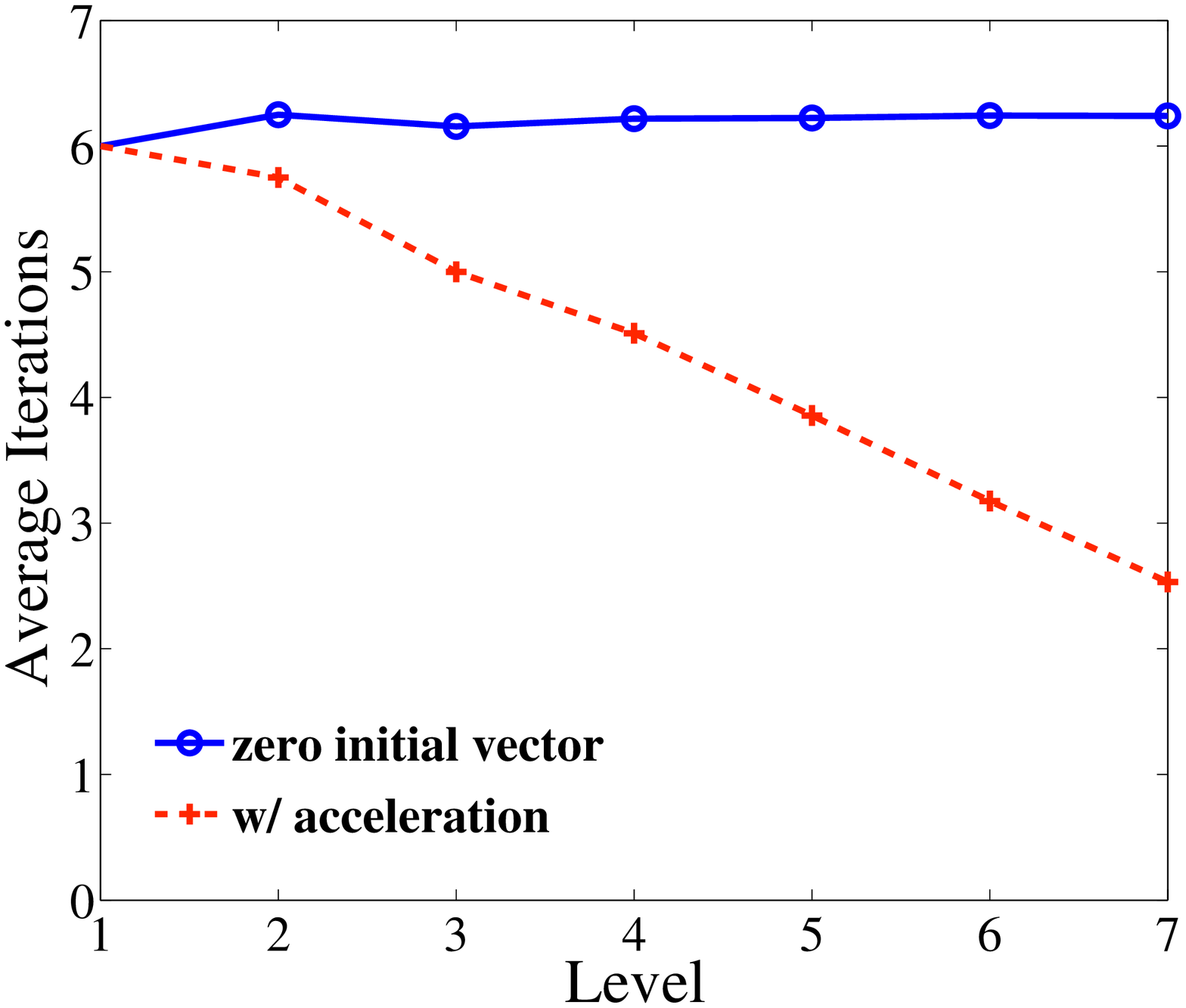}
\end{center}
\caption{ Average number of Newton iterations per level for solving problem \eqref{prob:nonlinear} with $F[u] = uu'$ (left) and $F[u] = u^5$ (right).}
\label{fig:nonlinear}
\end{figure}

\begin{table}
\begin{center}
\begin{tabular}{|c || c | c | c | c | c | }
\hline
SC Level	&  2	& 3 & 4 & 	5 	& 6 \\ \hline \hline
 $F[u] = u^5$, acc 	&	.03018	& .113832	&	.2746  	&	.7039	&	2.33314 	\\	
 $F[u] = u^5$, zero	&	.025976	& .119256	&	.339678	&	.949184	&	2.61958	\\ \hline
{\bf \% Savings} 	&	  -16.2  	&  4.5 	&  19.2 		&   	25.8 		&   	10.9  \\ \hline
 $F[u] = uu'$, acc 	&	.027754	& .089082	&	.22706	&	.629451	&	2.05741 \\ 
 $F[u] = uu'$, zero 	&	.026527	& .090435 &	.273355	&	.895027	&	2.4008 \\\hline
 {\bf \% Savings} 	&	 -4.6 		&   1.5   	& 16.9  		& 	29.7  	& 	14.3   \\ \hline
 \end{tabular}
\end{center}
\caption{Computational time in seconds for computing solution to problem \ref{prob:nonlinear}.}
 \label{tbl:nonlineartime}
\end{table}

\section{Conclusion}\label{sec:conc}

In this work, we proposed and analyzed an acceleration method for construction of sparse interpolation-based approximate solutions to PDEs with random input parameters. The acceleration method exploits the sequence of increasingly accurate approximate solutions to provide increasingly good initial guesses for the underlying iterative solvers that are used at each sample point. We have developed this method using a global Lagrange polynomial basis but the method can easily be extended to other non-intrusive methods.

While our method takes advantage of the natural structure provided by hierarchical SC methods, we do not take advantage of any hierarchy in the spatial approximation. As explained in Remark \ref{rem:ML}, our method may be used in combination with the multilevel method to accelerate the construction of stochastic operators, and reuse information from level to level. The combination of the acceleration scheme with multilevel methods will be the subject of future work.

 We rigorously studied error estimates in the special the case of linear elliptic PDEs with random inputs, providing complexity estimates for the proposed method. Several numerical examples confirm the expected performance.
 While the analysis of \S\ref{sec:estimate} applies to linear stochastic PDEs, the acceleration method may be even more well suited to nonlinear problems, as convergence rates may be improved, based on the choice of a good initial guess for nonlinear iterative solvers. A final numerical example demonstrates the advantage of our approach to nonlinear problems. A more rigorous study of acceleration for nonlinear solvers and extension to time dependent problems may provide interesting opportunities in the future.

\bibliographystyle{siam}
\bibliography{aSCrefs}

\end{document}